\numberwithin{equation}{section}
\newtheorem{definition}{Definition}[section]
\newtheorem{proposition}[definition]{Proposition}
\newtheorem{theorem}[definition]{Theorem}
\newtheorem{lemma}[definition]{Lemma}
\newenvironment{remark}[1][Remark:]{\begin{trivlist}
\item[\hskip \labelsep \emph{#1}]}{\end{trivlist}}
\newenvironment{example}[1][Example:]{\begin{trivlist}
\item[\hskip \labelsep \emph{#1}]}{\end{trivlist}}
\newcommand{\be}{\begin{equation}}
\newcommand{\ee}{\end{equation}}
\newcommand{\beu}{\begin{equation*}}
\newcommand{\eeu}{\end{equation*}}
\newcommand{\bea}{\begin{eqnarray}}
\newcommand{\eea}{\end{eqnarray}}
\newcommand{\beaa}{\begin{eqnarray*}}
\newcommand{\eeaa}{\end{eqnarray*}}
\newcommand{\bmx}{\begin{pmatrix}}
\newcommand{\emx}{\end{pmatrix}}
\newcommand{\g}{{\mathfrak g}}
\newcommand{\finproof}{{\hfill \rule{5pt}{5pt}}}
\newcommand{\mf}{\mathfrak}
\newcommand{\alf}{{\textstyle{\frac{1}{2}}}}
\newcommand{\half}{\frac{1}{2}}
\newcommand{\nn}{\nonumber}
\newcommand{\sign}{{\rm sign}}
\newcommand{\8}{{\infty}}
\newcommand{\eps}{\epsilon}
\newcommand{\rank}{{\rm rank}}
\newcommand{\Z}{{\mathbb Z}}\newcommand{\CC}{{\mathbb C}}
\newcommand{\ket}[1]{{\,\left|#1\right>}\,}
\newcommand{\id}{{\mathrm{id}}}
\newcommand{\bl}{{\bullet}}
\newcommand{\wh}{{\circ}}
\newcommand{\uqslt}{{\mathrm{U}_q({\mathfrak{sl}}_2})}
\newcommand{\uqslth}{{\mathrm{U}_q(\widehat{\mathfrak{sl}}_2})}
\newcommand{\uqgh}{{\mathrm{U}_q(\widehat\g)}}
\newcommand{\uqlslt}{{\mathrm{U}_q(\mathrm L{\mathfrak{sl}}_2})}
\newcommand{\lambdaof}[1]{\lambda^{#1}}
\newcommand{\comp}[1]{\overline{#1}}
\newcommand{\sgn}{\varepsilon}
\newcommand{\goi}[2]{=}
\newcommand{\on}{}
\newcommand{\groth}[1]{{\mathrm{\bf Mod}(#1)}}
\newcommand{\Cx}{\mathbb C^*}
\newcommand{\grass}[2]{\mbox{\bf Gr}(#1, #2)}
\newcommand{\grassm}[2]{\mbox{\bf Gr}^\bullet(#1, #2)}
\newcommand{\btp}{\begin{tikzpicture}[baseline=0pt,scale=0.9,line width=0.25pt]}
\newcommand{\etp}{\end{tikzpicture}}
\newcommand{\bsc}[2]{\begin{tikzpicture}[baseline=0pt,scale=0.5,line width=0.25pt]
\draw[gray] (0,0) grid (#1,#2);}
\newcommand{\esc}{\end{tikzpicture}}
\newcommand{\bscl}[2]{\begin{tikzpicture}[baseline=0pt,scale=1,line width=0.25pt]
\draw[gray] (0,0) grid (#1,#2);}
\newcommand{\escl}{\end{tikzpicture}}
\newcommand{\range}[2]{\left[ \! \left[ #1,#2 \right] \! \right]}
\newcommand{\ijr}[1]{1\leq i,j \leq #1}
\newcommand{\ir}[1]{1\leq i \leq #1}
\newcommand{\concat}{\#}
\newcommand{\phiin} {\phi^\pm_{\bl}}
\newcommand{\phiout}{\phi^\pm_{\wh}}
\newcommand{\phimid}{\phi^\pm_{H}}
\newcommand{\VV}[1]{\ket{V_{#1}}}
\newcommand{\ZZ}[1]{\ket{Z_{#1}}}
\newcommand{\V}{\mathsf V}
\newcommand{\ais}{{(a_1, \dots, a_n)}}
\newcommand{\down}{\downarrow}
\newcommand{\binomt}[2]{\binom{#1}{#2}_{\! t}}
\newcommand{\Vmn}{\V_{m,n}}
\newcommand{\Vmmmn}{\V_{(m),(n)}}
\newcommand{\psimnur}{{\psi^\pm_{m,n;\pm r}}}
\newcommand{\psimnu}{{\psi^\pm_{m,n}(u)}}
\newcommand{\psimmmnu}{{\psi^\pm_{(m),(n)}(u)}}
\newcommand{\ie}{\textit{i.e.}\,\,}
\newcommand{\eg}{\textit{e.g.}\,\,}
\newcommand{\sqr}[1]{\draw[thick] #1 rectangle ++(1,1); \draw[thick]   #1 ++(.25,.5) -- ++(.1,.1)-- ++(-.1,-.1) -- ++(.1,-.1) -- ++(-.1,.1) -- ++ (.5,0) ;}
\newcommand{\sqd}[1]{\draw[thick, fill=gray!20] #1 rectangle ++(1,1); \draw[thick]  #1 ++(.5,.75) -- ++(.1,-.1) -- ++(-.1,.1) -- ++(-.1,-.1) -- ++(.1,.1) -- ++ (0,-.5) ;}
\begin{document}

\baselineskip 16.5pt
\parindent 18pt
\parskip 8pt

\begin{titlepage}
\begin{flushright}
{\bf \today} \\
YITP-10-30\\
LPT-10-26
\end{flushright}
\begin{centering}
\vspace{.2in}

{\Large {\bf 
On $q,t$-characters and the $l$-weight Jordan \\ filtration of standard $\uqslth$-modules
}}
\vspace{.3in}

{\large C. A. S. Young ${}^{1}$ and R. Zegers ${}^2$}\\    
\vspace{.2 in}    
${}^{1}$ {\emph{Yukawa Institute for Theoretical Physics\\  
Kyoto University, Kyoto\\
 606-8502, Japan}}\\    
\vspace{.2 in}    
${}^{2}$ {\emph{Laboratoire de Physique Th\'eorique\\ Universit\'e Paris-Sud 11/CNRS\\ 91405 Orsay Cedex, France}}    
    
%
\footnotetext[1]{{\tt charlesyoung@cantab.net,}\quad ${}^{2}${\tt robin.zegers@th.u-psud.fr}
\\ 2010 Mathematics Subject Classification: Primary 17B37; Secondary 81R50}
\vspace{.5in}

{\bf Abstract}

\vspace{.1in}

\end{centering}

{The Cartan subalgebra of the quantum 
loop algebra $\uqlslt$ is generated by a family of mutually commuting operators, responsible for the $l$-weight decomposition of finite dimensional $\uqlslt$-modules. 
The natural Jordan filtration induced by these operators is generically non-trivial on $l$-weight spaces of dimension greater than one. 
We derive, for every standard module of $\uqlslt$, the dimensions of the Jordan grades and prove that they can be directly read off from the $t$-dependence of the $q,t$-characters introduced by Nakajima. 
To do so we construct explicit bases for the standard modules of $\uqlslt$ with respect to which the Cartan generators are upper-triangular. The basis vectors of each $l$-weight space are labelled by the elements of a ranked poset from the family $L(m,n)$.
}

\end{titlepage}

\begin{flushright}
\break

\end{flushright}
\vspace{1cm}
\section{Introduction}
Let $q \in \Cx$, not a root of unity.
Let $\g$ be a simple finite dimensional Lie algebra and $\widehat{\g}$ be the corresponding (untwisted) affine Ka\v c-Moody algebra. The quantum affine algebra $\uqgh$ can equivalently be seen as the Drinfel'd-Jimbo, or \emph{standard}, quantum universal enveloping algebra of $\widehat{g}$ \cite{Drinfeld1, Jimbo}; or as the \emph{quantum affinization} of  the standard quantum universal enveloping algebra $\mathrm{U}_q (\g)$ \cite{Drinfeld2}. The latter point of view, sometimes referred to as Drinfel'd's current -- or second, or new -- presentation, is particularly well adapted to the study of finite dimensional representations. In particular, it provides a triangular decomposition $(\mathrm{U}^-_q(\widehat \g), \mathrm{U}^0_q(\widehat\g), \mathrm{U}^+_q(\widehat\g))$ with respect to which all finite dimensional $\uqgh$-modules turn out to be highest weight in a generalized sense \cite{CPsl2,CPbook}. The corresponding generalization of the notion of weight, often referred to a
 s \emph{$l$-weights}, consists in $\rank(\g)$-tuples of rational functions of a formal variable $u$. The finite dimensional simple $\uqgh$-modules 
were classified by Chari and Pressley in terms of their highest $l$-weights, the so-called Drinfel'd polynomials. Their complete $l$-weight structure was then shown to be encoded in a \emph{$q$-character}, \cite{FR} -- see also \cite{Knight} --, generalizing the standard theory of characters to the quantum affine context. 
There exists an algorithm, due to Frenkel and Mukhin \cite{FM}, to compute the $q$-character of any module with a unique dominant $l$-weight. This too has a direct analog in classical representation theory; however, such modules are much more ubiquitous in the quantum affine case -- essentially because the loop variable $u$ lifts many degeneracies --, making the algorithm correspondingly more powerful.

The $q$-character $\chi_q(V)$ of a given $\uqgh$-module $V$ is a polynomial in an infinite set of formal variables $(Y_{i,a}^{\pm 1})_{1\leq i\leq \rank(\g), a \in \Cx}$ such that with each $l$-weight space $V_\gamma$ of $V$ is associated a unique monomial $m_\gamma$ in the expression of $\chi_q(V)$, whose coefficient is equal to the dimension of $V_\gamma$. When the latter is one-dimensional it constitutes a trivial $\mathrm{U}^0_q(\widehat\g)$-module. Thus, when all the $l$-weight spaces of a $\uqgh$-module $V$ are one dimensional -- we shall say that $V$ is \emph{thin}, see definition \ref{Def:Thin} --, its structure as a $\mathrm{U}^0_q(\widehat\g)$-module can be directly read off from its $q$-character. In contrast, when an $l$-weight space $V_\gamma$ has dimension strictly greater than one, or, equivalently, when the associated monomial $m_\gamma$ occurs in the expression of a $q$-character with a coefficient greater than one -- we shall say that the module to which it 
 pertains is \emph{thick} --, its structure as a $\mathrm{U}^0_q(\widehat\g)$-module is expected to become non-trivial. This feature, which has no direct analog in the representation theory of quantum groups of finite type,
is related to the formation of non-trivial Jordan blocks for the action of the $l$-weight generators $(\phi^\pm_i(u))_{1\leq i\leq \rank(\g)}$ of $\mathrm{U}^0_q(\widehat\g)$ on $V_\gamma$. In that case, restricting to $\uqslth$ for simplicity -- as we shall throughout this paper --, $V_\gamma$ is expected to admit a non-trivial Jordan filtration.
The simplest non-trivial example is the $\uqslth$-module $V_a \otimes V_a$, where $V_a$ is the fundamental $\uqslth$-module with spectral parameter $a \in \Cx$. It was discussed at the end of \cite{HernandezFusionI}. Except for this example, the actual structure of these Jordan filtrations is unknown. This clearly constitutes an important gap in the present understanding of the structure of finite-dimensional representations of quantum affine algebras, especially since many representations of interest are thick. For example, minimal affinizations are thin in types $A$ and $B$ but not in other types in general \cite{Hminaffs}.

The first purpose of this paper is thus to determine the Jordan filtration of the $l$-weight spaces for \emph{standard} modules of $\uqslth$. We do so in Theorem \ref{THX}. Standard modules were originally introduced in the context of $K$-theoretical realizations of quantum affine algebras and of their finite dimensional representations \cite{GV, Vass, Nakajima1} -- see section \ref{Sec:Def} for an algebraic characterization due to \cite{VV}, and section \ref{Sec:qtchar} for a definition in terms of the Borel-Moore homology of Grassmannians. They constitute a basis of the Grothendieck ring $\groth\uqslth$ of the category of finite dimensional $\uqslth$-modules. Moreover, there is a certain sense in which one can continuously interpolate between every thick standard module and the generic, thin, one. This property will allow us to construct explicitly thick standard modules as appropriate limits of thin standard modules and subsequently to determine the Jordan filtrations of t
 heir $l$-weight spaces. 

As a corollary of Theorem \ref{THX}, we establish a correspondence between the dimensions of the associated Jordan grades and the \emph{$q,t$-characters} introduced by Nakajima. This proves in type $A_1$ a slightly modifed version of a conjecture in \cite{Nakajima2}. As we recall in section 5, $q,t$-character  were first defined geometrically for standard modules over quantum affine algebras of simply laced type, as generating functions of Poincar\'e polynomials for graded quiver varieties of corresponding type \cite{Nakajima2}. This geometrical definition was subsequently axiomatized in \cite{Nakajima3} and extended to arbitrary Lie algebras in \cite{Hernandezqt}. $q,t$-characters constitute a generalization of $q$-characters in the sense that they evaluate to $q$-characters in the limit $t=1$. The formal parameter $t$ thus appears as a way to lift the degeneracies of the monomials in the $q$-characters of thick $\uqgh$-modules and it is natural to expect that coefficients o
 f 
 powers of $t$ in $q,t$-characters encode the Jordan filtrations of the $l$-weight spaces of $\uqgh$-modules in some way. The only discrepancy in our result as compared to the conjecture \cite{Nakajima2} is the necessity to reorder these coefficients into weakly decreasing order. Within any given $l$-weight space $V_\gamma$, the dimensions of the associated Jordan grades are certainly weakly increasing with $k$: intuitively speaking, in a Jordan basis each Jordan chain ``begins'' at some grade and contributes $+1$ to the dimension of all subsequent grades until one reaches the eigenspace. 

It is worth noting that, as it stands, theorem \ref{THX} does not tell us anything about those thick $\uqslth$-modules that occur as non-trivial \emph{quotients} of thick standard modules (including those thick simple modules that are isomorphic to tensor products of non-trivial Kirillov-Reshetikhin modules). 
Let us finally mention that, in \cite{VVPervSheav}, $q,t$-characters were also proven to encode a gradation of the $l$-weight spaces compatible with the action of a family of generators in $\mathrm{U}^-_q(\widehat\g)$ -- denoted $\phi_{ir}^{(t)}$ there -- which should not be mistaken for the one based on the $l$-weight operators that we consider here.

The structure of this paper is as follows. 
In section \ref{Sec:Def}, we recall the necessary facts about $\uqslth$ and its finite dimensional representations, including the algebraic definition of standard modules. At the end of section \ref{Sec:Def}, we choose a basis of $l$-weights of a general thin standard module and give explicitly the action of the generators of $\uqslth$ in this basis. 
In sections \ref{Sec:ZA} and \ref{Sec:multiplepoints} we find explicit bases for all thick standard modules, by regarding them as limiting cases of the thin standard modules in which the spectral parameters coincide. In particular, we compute the lengths and multiplicities of the Jordan chains of $\phi^\pm(u)$. 
Finally, in section \ref{Sec:qtchar} we recall the geometrical construction of standard modules and the definition of $q,t$-characters. This allows us to show that the latter do indeed encode the Jordan block structure of $\phi^\pm(u)$. We conclude by commenting on the geometrical interpretation of our results and what one could expect in higher rank cases.

\section{The quantum affine algebra $\uqslth$ and its finite dimensional representations}
\label{Sec:Def}
The quantum affine algebra $\uqslth$ can be seen as the quantum affinization of the finite dimensional quantum algebra $\uqslt$. From this point of view, $\uqslth$ is 
an associative algebra over $\mathbb C$ generated by
\be k^{\pm 1}, \quad\quad  (h_{n})_{n \in \mathbb Z^*}\, , \quad\quad (x^\pm_{n})_{n \in \mathbb Z}, \quad\quad c^{\pm 1/2} \, .\label{gens}\ee
In this paper, since we are only concerned with finite dimensional representations of $\uqslth$ and, since it suffices \cite{CPbook} to see these as representations of the quantum loop algebra $\uqlslt$, we shall, as usual,
ignore the 
central elements $c^{\pm 1/2}$. Arranging the remaining generators into the following formal power series
\be x^\pm(u) := \sum_{n \in \mathbb Z} x^{\pm}_{n} u^{-n} \label{xiu} \ee    
\be \phi^\pm (u) = \sum_{n=0}^\infty  \phi_{\pm n}^\pm u^{\pm n} :=     
k^{\pm 1} \exp \left(\pm (q - q^{-1}) \sum_{m=1}^\infty h_{\pm m} u^{\pm m} \right ) \, , \label{phiu} \ee    
the defining relations of $\uqlslt$ read
\be    
\left [\phi^\pm(u) , \phi^\pm (v) \right ] = \left [ \phi^\pm (u) , \phi^\mp(v) \right ] = 0 \label{phiphi}\ee    
\be    
(q^{-1}- quv) \phi^\pm(u)\,  x^+(v) = (q- q^{-1} uv) \, x^+(v) \, \phi^\pm(u) \label{phix+}\ee    
\be    
(q- q^{-1}uv)\phi^\pm(u) \, x^-(v) = (q^{-1}- q uv) \, x^-(v) \, \phi^\pm(u) \label{phix-}    
\ee    
\be \left [x^+(u), x^-(v) \right ] = \frac{1}{q-q^{-1}} \left (\delta(v/u) \phi^+(1/v) - \delta(u/v) \phi^-(1/u) \right ) \label{x+x-}\ee    
\be \left ( u-q^{\pm 2}v \right )x^\pm(u) \, x^\pm(v) =     
\left (q^{\pm 2}u-v \right)\, x^\pm(v) \, x^\pm(u) \, ,\label{x+x+1}\ee    
where we have set
\be \delta(u) := \sum_{n \in \mathbb Z} u^n .\ee
For the standard coproduct in the current presentation above see \cite{Thoren2000iro}. There also exists a so-called \emph{current coproduct}:
\be \Delta (\phi^\pm(u)) = \phi^\pm(u) \otimes \phi^\pm(u) \label{deltaphi}\ee
\be \Delta (x^+(u)) = 1 \otimes x^+(u) +  x^+(u) \otimes \phi^-(1/u) \label{deltax+}\ee
\be \Delta (x^-(u)) =x^- (u) \otimes 1 +  \phi^+(1/u) \otimes   x^-(u) \label{deltax-}\,;\ee
although one should keep in mind that this is, strictly speaking, ill-defined because of the infinite sums involved on the r.h.s. of (\ref{deltax+}) and (\ref{deltax-}) -- see \eg \cite{Grosse,HernandezFusionII} for a rigorous treatment. 

For any $\uqslth$-module $V$ and any formal series
\be \gamma^\pm(u) := \sum_{m \in \mathbb N}  \gamma_{\pm m}^\pm u^{\pm m} \in \mathbb C[[u^{\pm 1}]] \ee
define 
\be V_{\gamma} = \{ v \in V : \exists N \in \mathbb N\,\,\mathrm{s.t.}\,\, \forall r\in\mathbb N_0 \quad \left( \phi^\pm_{\pm r} - \gamma^\pm_{\pm r}\, \id\right)^N \on v = 0 \} \, . \label{lweightspacesdef}\ee
Whenever $\dim V_{\gamma}>0$, one says that $\gamma$ is an \emph{$l$-weight} of $V$ and that $V_{\gamma}$ is the corresponding \emph{$l$-weight space}. A representation of $\uqslth$ is of \emph{type 1} if it is the direct sum of its $l$-weight spaces (and $c^{\pm1/2}$ acts as the identity on it). 
It is known \cite{FR} that for every finite-dimensional type 1 representation of $\uqslth$, every $l$-weight is of the form
\be \gamma^\pm(u) = q^{\deg Q - \deg R} \, \frac{Q(uq^{-1}) R(uq)}{Q(uq) R(uq^{-1})} \, ,\ee
where the right hand side is to be treated as a series in positive (resp. negative) integer powers of $u$ for $\gamma^+(u)$ (resp. $\gamma^-(u)$), and $Q$ and $R$ are monic polynomials\footnote{\ie polynomials with constant term $1$},
\be Q(u) = \prod_{a\in \Cx} \left( 1- ua\right)^{q_{a}}, \quad\quad 
    R(u) = \prod_{a\in \Cx} \left( 1- ua\right)^{r_{a}}. \ee
The above property allows one to give a purely algebraic definition of $q$-characters: assigning to $\gamma$ a monomial
\be m_{\gamma} = \prod_{a\in \Cx} Y_{a}^{q_{a}-r_{a}}\ee
in formal variables $(Y_{a}^{\pm 1})_{a\in \Cx}$, the \emph{$q$-character} map $\chi_q$ \cite{FR} is the homomorphism of rings
\be \chi_q : \groth\uqslth \longrightarrow \mathbb Z\left[ Y_{a}^{\pm 1} \right]_{a \in \Cx} \ee
defined by
\be \chi_q (V) = \sum_{\gamma} \dim\left(V_{ \gamma}\right) m_{\gamma}.\ee
It is injective \cite{FR}. 

As we argued in the introduction, the $q$-character of a given $\uqslth$-module characterizes its structure as a $\mathrm{U}^0_q(\widehat{\mathfrak{sl}}_2)$-module more or less completely depending on the dimensions of its different $l$-weight spaces. The crucial distinction is given by the following
\begin{definition}
\label{Def:Thin}
A $\uqslth$-module of type 1 is \emph{thin} if and only if all its $l$-weight spaces have dimension $1$. Otherwise, it is \emph{thick}.
\end{definition}
While knowledge of the $q$-character of a thin module completely characterizes its structure as a $\mathrm{U}^0_q(\widehat{\mathfrak{sl}}_2)$-module, it only gives partial information as to the $\mathrm{U}^0_q(\widehat{\mathfrak{sl}}_2)$-module structure of thick modules. 
\begin{definition}
The fundamental $\uqslth$-module at spectral parameter $a \in \Cx$, $V_a$, is the simple $\uqslth$-module with Drinfel'd polynomial $P(u)= (1-au)$.
\end{definition}
It may be shown that
\be\label{qcharfund} \chi_q(V_a) = Y_a + Y_{aq^2}^{-1}  \ee 
Thus, $V_a$ is thin. Furthermore, it is simple and the simplest example of a \emph{standard module}. 
A theorem due to Varagnolo and Vasserot \cite{VV} provides (for $q \in \Cx$ not a root of unity, as here) a purely algebraic identification of standard modules. In the case of $\uqslth$, it allows us to make the following
\begin{definition}\label{Mdef}
Let $P\in\mathbb C[u]$ be a monic polynomial. Let $(a^{-1}_i)_{1\leq i\leq \deg P}$ be the roots of $P$ arranged in any order such that $\ell<k$ $\implies$ $a_\ell/a_k \neq q^2$. 
Then the \emph{standard module} $M(P)$ is defined by
\be \label{MPdef} M(P) \cong V_{a_1} \otimes V_{a_2} \otimes \dots \otimes V_{a_{\deg P}},\ee
(where the right-hand side is an ordered tensor product). 
\end{definition}
(The non-trivial result is 
that the ordering imposed on the $a_{i}$ ensures that this definition makes sense not only at the level of equivalence classes in $\groth\uqslth$ but even at the level of isomorphism classes in the category of finite dimensional $\uqslth$-modules itself. Note that, in our conventions for the coproduct, $V_{aq^{-1}}\otimes V_{aq}$ admits a trivial submodule and is highest $l$-weight, whereas $V_{aq}\otimes V_{aq^{-1}}$ admits a trivial quotient module and is not, c.f. \cite{CPsl2}.)

The following proposition is adapted from \cite{Nakajima1}, proposition 13.3.1.
\begin{proposition}
\label{Prop:Stdhighest}
Let $P\in \mathbb C[u]$ be a monic polynomial. The standard module $M(P)$ is a highest $l$-weight module with highest $l$-weight
\be \gamma_P^\pm(u) = q^{\deg P}\frac{P(uq^{-1})}{P(uq)} \, ,\ee
namely, there exists $v \in M(P)$ such that the following hold:
\begin{enumerate}[i)]
 \item $x^+(u) \on v = 0$;
\item $\left ( \phi^\pm(u) - \gamma_P^\pm(u) \id \right ) \on v = 0$;
\item $M(P)= \mathrm{U}^-_q({\widehat{\mathfrak{sl}}_2}) \on v$
\end{enumerate}
\end{proposition}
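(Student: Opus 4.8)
The plan is to take for $v$ the tensor product of the highest $l$-weight vectors of the fundamental factors appearing in the defining tensor product (\ref{MPdef}), and to verify the three conditions using the current coproduct (\ref{deltaphi})--(\ref{deltax-}). Write $n=\deg P$ and let $v_i^+\in V_{a_i}$ be the highest $l$-weight vector of the $i$-th factor, so that $x^+(u)\on v_i^+=0$ and $\phi^\pm(u)\on v_i^+=\gamma_{a_i}^\pm(u)\,v_i^+$ with $\gamma_{a_i}^\pm(u)=q(1-a_iuq^{-1})/(1-a_iuq)$ (the highest $l$-weight of $V_{a_i}$, cf.\ (\ref{qcharfund})); then set $v=v_1^+\otimes\dots\otimes v_n^+$. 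Conditions i) and ii) follow by short inductions on $n$. For ii), the grouplike form (\ref{deltaphi}) makes $v$ a simultaneous eigenvector of every $\phi^\pm_{\pm r}$, with eigenvalue
\be \prod_{i=1}^n \gamma_{a_i}^\pm(u) = q^{\deg P}\,\frac{\prod_{i=1}^n(1-a_iuq^{-1})}{\prod_{i=1}^n(1-a_iuq)} = q^{\deg P}\,\frac{P(uq^{-1})}{P(uq)} = \gamma_P^\pm(u), \ee
since $P$ is monic with roots $a_i^{-1}$. For i), iterating (\ref{deltax+}) expresses $x^+(u)\on v$ as a sum of terms each containing a factor $x^+(u)\on v_i^+=0$, and hence vanishing.

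The substantive content is condition iii), the cyclicity of $M(P)$ over $\mathrm{U}^-_q(\widehat{\mathfrak{sl}}_2)$, and this is where the ordering hypothesis of Definition~\ref{Mdef} is indispensable. I would prove it by induction on $n$, using the iterated form of (\ref{deltax-}),
\be \Delta^{(n)}\!\left(x^-(u)\right) = \sum_{i=1}^n \big(\phi^+(1/u)\big)^{\otimes(i-1)} \otimes x^-(u) \otimes 1^{\otimes(n-i)}, \ee
and tracking how $\mathrm{U}^-_q(\widehat{\mathfrak{sl}}_2)\on v$ fills out the $2^n$-dimensional tensor-product basis $\{v_1^{\sgn_1}\otimes\dots\otimes v_n^{\sgn_n}:\sgn_i\in\{+,-\}\}$. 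The key observation is that, in each factor, $x^-(v)\on v_i^+$ is supported as a formal delta at a single point ($v=a_iq$ in our conventions), which one checks from relation (\ref{phix-}) by comparing the resulting $\phi^\pm$-eigenvalue with the lowest $l$-weight $Y_{a_iq^2}^{-1}$ of $V_{a_i}$. Localizing $x^-(v)\on v$ at that point therefore isolates the term lowering the $i$-th factor, with coefficient
\be \prod_{j<i}\gamma_{a_j}^+\!\big(1/(a_iq)\big) = \prod_{j<i} q\,\frac{1-(a_j/a_i)q^{-2}}{1-a_j/a_i}. \ee
This coefficient is nonzero \emph{precisely} because the ordering condition guarantees $a_j/a_i\neq q^2$ for every $j<i$, and it is this non-vanishing that lets each factor be lowered and the induction close. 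The $n=2$ instance is exactly the dichotomy recalled after Definition~\ref{Mdef}: for $V_{aq}\otimes V_{aq^{-1}}$ one has $a_1/a_2=q^2$, the coefficient vanishes, $v_1^+\otimes v_2^-$ cannot be reached, and $v$ fails to be cyclic.

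The main obstacle is thus condition iii), and within it the case where $P$ has repeated roots: then several localization points $a_iq$ coincide, the clean separation of tensor factors above breaks down, and one must argue more carefully. I would handle this either by refining the localization to extract the individual (Jordan-type) contributions at a coincident point, or by realizing a module with repeated roots as a specialization of one with distinct roots — the continuity/interpolation idea exploited later in the paper — or simply by invoking the cyclicity theorem of Chari and Pressley \cite{CPsl2}, of which the present ordering condition is a special case. Everything else, namely conditions i) and ii) together with the distinct-root case of iii), is the routine computation sketched above.
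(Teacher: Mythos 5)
The first thing to say is that the paper contains no proof of this proposition at all: it is imported from \cite{Nakajima1}, Proposition 13.3.1, where it is established geometrically, via the quiver-variety realization of standard modules recalled in section \ref{Sec:qtchar}. Your algebraic route (coproduct formulas, delta-function localization, induction on the number of tensor factors) is therefore necessarily different, and its core computation is correct for pairwise distinct roots: the support of $x^-(v)$ on the highest $l$-weight vector of $V_{a_i}$ is indeed at $v=a_iq$, the localized coefficient is $\prod_{j<i}q\,(1-(a_j/a_i)q^{-2})/(1-a_j/a_i)$, and its non-vanishing is exactly the ordering condition of Definition \ref{Mdef}. One point you should make explicit for the induction to close: the factors must be lowered in \emph{decreasing} order of index. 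Then every already-lowered factor sits to the right of the one being lowered and receives the trivial leg $1$ of the iterated coproduct, so only unlowered factors $j<i$ ever contribute. If a factor $j<i$ has already been lowered, its contribution is instead $q^{-1}(1-(a_j/a_i)q^{2})/(1-a_j/a_i)$, which vanishes when $a_j/a_i=q^{-2}$ --- a ratio the ordering condition \emph{allows} (e.g. $V_{aq^{-1}}\otimes V_{aq}$) --- so a ``lower in any order'' induction stalls.

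Two genuine gaps remain as written. First, the coproduct: Definition \ref{Mdef} rests on \cite{VV} and the facts quoted from \cite{CPsl2}, which are formulated for the standard coproduct of \cite{Thoren2000iro}, whereas your computations use the current coproduct (\ref{deltaphi})--(\ref{deltax-}), which the paper explicitly warns is ill-defined without the regularization of \cite{Grosse,HernandezFusionII}; these are different comultiplications, so you must either redo i)--iii) for the standard coproduct (where the grouplike/primitive formulas hold only modulo correction terms, which still suffices but has to be argued) or justify identifying the two tensor products. Second, repeated roots: of your three proposed fixes, the specialization idea does not work as stated, because cyclicity is \emph{not} preserved under limits --- the dimension of $\mathrm{U}^-_q(\widehat{\mathfrak{sl}}_2)\on v$ can only drop at special parameter values, and indeed the wrongly ordered $V_{aq}\otimes V_{aq^{-1}}$, which is not highest $l$-weight, is itself a limit of cyclic modules; worse, inside this paper the limiting machinery of section \ref{Sec:ZA} takes Proposition \ref{Prop:Stdhighest} at coincident points as an input, so that route would be circular. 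The ``refined localization'' is not yet an argument. What actually closes the proof is the appeal to the Chari--Pressley cyclicity theorem of \cite{CPsl2}, which does cover repeated roots; but since that theorem, for the standard coproduct, delivers i)--iii) wholesale, it subsumes your computation rather than completing it.
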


\noindent As already mentioned, standard modules need not be simple in general. However, we have
\begin{proposition}[\cite{CPsl2}]
\label{Prop:StdSimple}
Let $P(u) = \prod_{i=1}^{\deg P} (1-a_i u)$, with $a_i\in \Cx$ for $1\leq i\leq\deg P$. The standard module $M(P)$ is simple --- and therefore isomorphic to $L(P)$ --- if and only if $i\neq j$ $\implies$ $a_i/a_j\notin \{q^2,q^{-2}\}$.
\end{proposition}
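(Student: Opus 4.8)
The plan is to turn simplicity into a purely combinatorial statement about the dominant monomials of $\chi_q(M(P))$. By Proposition~\ref{Prop:Stdhighest}, $M(P)$ is highest $l$-weight: it is generated by a vector $v$ whose $l$-weight is the dominant monomial $m_P=\prod_{i}Y_{a_i}$, and whose $l$-weight space $M(P)_{\gamma_P}$ is one-dimensional (it is the top space). I claim that $M(P)$ is simple if and only if $m_P$ is the \emph{only} dominant monomial occurring in $\chi_q(M(P))$. Indeed, let $N\subseteq M(P)$ be a nonzero submodule; being finite-dimensional and of type $1$, its top $\mathfrak{sl}_2$-weight space is annihilated by $x^+(u)$ and preserved by the commuting family $\phi^\pm_{\pm r}$, so over $\CC$ it contains a simultaneous eigenvector $w$, i.e.\ a highest $l$-weight vector of $N$. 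Its $l$-weight is, by the Frenkel--Reshetikhin/Chari--Pressley theory, a dominant monomial $m'$ appearing in $\chi_q(M(P))$. If $m_P$ is the unique such monomial then $m'=m_P$, so $w$ lies in the one-dimensional top space $\CC v$ and $N\supseteq \mathrm{U}^-_q(\widehat{\mathfrak{sl}}_2)\on v=M(P)$; hence $M(P)$ is simple. Conversely, a second dominant monomial $m'\neq m_P$ produces a highest $l$-weight vector generating a submodule all of whose weights are $\leq m'<m_P$, which therefore never reaches the top space and is proper; so $M(P)$ is reducible.

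It remains to enumerate the dominant monomials. Since $\chi_q$ is a ring homomorphism and $M(P)$ is a tensor product of fundamentals, \eqref{qcharfund} gives
\[
\chi_q(M(P))=\prod_{i=1}^{n}\bigl(Y_{a_i}+Y_{a_iq^2}^{-1}\bigr),
\qquad n=\deg P.
\]
Its monomials are indexed by the subsets $S\subseteq\{1,\dots,n\}$ recording which factors contribute their inverse term, $m_S=\bigl(\prod_{i\notin S}Y_{a_i}\bigr)\bigl(\prod_{i\in S}Y_{a_iq^2}^{-1}\bigr)$, with $m_\emptyset=m_P$. For $m_S$ to be dominant, every negative power $Y_{a_iq^2}^{-1}$ with $i\in S$ must be compensated; but the factors indexed by $S$ contribute only negative powers, so the compensation can come only from some $j\notin S$ with $a_j=a_iq^2$. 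Thus a dominant $m_S$ with $S\neq\emptyset$ exists if and only if $a_j/a_i=q^2$ for some $i\neq j$ --- taking $S=\{i\}$ furnishes the witness $m_{\{i\}}=m_P/(Y_{a_i}Y_{a_j})$, and conversely the absence of any such pair leaves the negative power of $Y_{a_iq^2}$ uncancelled for every nonempty $S$. Since $a_j/a_i=q^2$ for some $i\neq j$ is exactly the failure of the condition $a_i/a_j\notin\{q^2,q^{-2}\}$ for all $i\neq j$, the two paragraphs together give the stated equivalence; and when $M(P)$ is simple it is, being a simple highest $l$-weight module with Drinfel'd polynomial $P$, isomorphic to $L(P)$ by definition.

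The step I expect to be the main obstacle is the one underpinning the criterion of the first paragraph: for a \emph{thick} standard module the generators $\phi^\pm(u)$ act non-semisimply, so one cannot simply diagonalise them on a submodule. The argument must instead be localised on the top $\mathfrak{sl}_2$-weight space of $N$, where the commuting operators $\phi^\pm_{\pm r}$ necessarily share a genuine eigenvector, and it must invoke the Frenkel--Reshetikhin/Chari--Pressley structure theory to guarantee that the resulting highest $l$-weight is a dominant monomial of $\chi_q(M(P))$ rather than merely a generalised one. Establishing these two facts cleanly --- existence of a genuine highest $l$-weight eigenvector in any submodule, and dominance of its $l$-weight --- is the crux; the monomial bookkeeping of the second paragraph is then routine.
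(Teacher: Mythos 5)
The paper itself gives no proof of this proposition --- it is imported from \cite{CPsl2} --- so your attempt must stand on its own. Your ``if'' direction (no ratio in $\{q^2,q^{-2}\}$ $\Rightarrow$ simple) is essentially correct: any nonzero submodule $N$ has a top $\mathrm{U}_q(\mathfrak{sl}_2)$-weight space, which is killed by $x^+(u)$ and preserved by the commuting family $\phi^\pm_{\pm r}$, hence contains a genuine highest $l$-weight vector; the Chari--Pressley classification of finite-dimensional highest $l$-weight modules makes its $l$-weight dominant; and your enumeration of the monomials of $\prod_i(Y_{a_i}+Y_{a_iq^2}^{-1})$ correctly shows that $m_P$ is then the unique dominant monomial, occurring with coefficient $1$ (every other $m_S$ has $\mathfrak{sl}_2$-weight $n-2|S|<n$), so $N$ contains $v$ and hence equals $\mathrm{U}_q^-(\widehat{\mathfrak{sl}}_2)\on v=M(P)$. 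The combinatorial bookkeeping is also right.

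The genuine gap is the converse, which you dispose of in one sentence: ``a second dominant monomial $m'\neq m_P$ produces a highest $l$-weight vector.'' That implication is false for finite-dimensional $\uqslth$-modules in general: by \cite{CPsl2} the module $L(Y_aY_{aq^2})\otimes L(Y_a)$ is \emph{simple} (its two $q$-strings are not in special position), yet its $q$-character $\left(Y_aY_{aq^2}+Y_aY_{aq^4}^{-1}+Y_{aq^2}^{-1}Y_{aq^4}^{-1}\right)\left(Y_a+Y_{aq^2}^{-1}\right)$ contains the dominant monomial $Y_a$; a highest $l$-weight vector of that $l$-weight would generate a proper nonzero submodule, contradicting simplicity. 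So dominance of an $l$-weight does not put vectors of that $l$-weight space into $\ker x^+(u)$, and you invoke no property of standard modules that would. The direction can be repaired with the paper's own tools. If the $a_i$ are pairwise distinct and $a_j=a_iq^2$ for some $j\neq i$, then formula (\ref{VAxpaction}) of Proposition \ref{Prop:VBasis} gives $x^+(u)\on\VV{\{i\}}=0$, because the factor with $k=j$ is $(a_iq-a_jq^{-1})/(a_i-a_j)=0$; since $\VV{\{i\}}$ has $\mathfrak{sl}_2$-weight strictly below the top, $\mathrm{U}_q^-(\widehat{\mathfrak{sl}}_2)\on\VV{\{i\}}$ is a proper nonzero submodule and $M(P)$ is reducible. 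If roots repeat, pass to the Grothendieck ring (which is commutative, as $\chi_q$ is an injective ring homomorphism into a commutative ring): $[M(P)]=\prod_k[V_{a_k}]$, and the factor $[V_{a_i}\otimes V_{a_j}]$ with $a_j=a_iq^2$ is a sum of at least two classes of simples --- this is the two-fold case just settled, equivalently the paper's own remark after Definition \ref{Mdef} that $V_{bq^{-1}}\otimes V_{bq}$ (here $b=a_iq$) admits a trivial submodule. Hence $[M(P)]$ has length at least two and $M(P)$ cannot be simple.
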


\noindent Thus, for \emph{generic} $P$, $L(P) \cong M(P)$. More generally, for every Drinfel'd polynomial $P$, the simple module $L(P)$ is the unique simple quotient of $M(P)$.
So, when $P$ is such that $M(P)$ is not simple, 
$L(P)$ becomes a \emph{non-trivial} quotient of $M(P)$. This produces a discontinuity in the dimension of $L(P)$ while smoothly varying the roots of $P$.
\begin{lemma}[\cite{CPweyl}]
\label{lemma:stdunique}
Let $P \in \mathbb C[u]$ be a monic polynomial. $M(P)$ is the unique -- up to isomorphism -- highest $l$-weight $\uqslth$-module with $q$-character equal to $\chi_q(M(P))$
\end{lemma}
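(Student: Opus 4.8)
The plan is to show that any highest $l$-weight $\uqslth$-module $V$ with $\chi_q(V)=\chi_q(M(P))$ is forced to be a quotient of $M(P)$ of full dimension, hence isomorphic to it. First I would record that, since $\chi_q$ is defined on the category of finite-dimensional type~1 modules and $\chi_q(V)$ is a finite sum of monomials with finite coefficients, $V$ is finite-dimensional with $\dim V=\sum_\gamma\dim(V_\gamma)$ equal to the value of $\chi_q(V)$ at $Y_a=1$ for all $a\in\Cx$. In particular the hypothesis $\chi_q(V)=\chi_q(M(P))$ already yields $\dim V=\dim M(P)$.

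Next I would recover the highest $l$-weight directly from the $q$-character. For a module generated by a single highest $l$-weight vector, that vector spans a one-dimensional $l$-weight space whose monomial $m_{\gamma_P}=\prod_{a\in\Cx}Y_a^{q_a}$ is dominant and is the unique maximal monomial with respect to the usual partial order on monomials (every other $l$-weight of a highest $l$-weight module is obtained from it by lowering). Thus $m_{\gamma_P}$ occurs in $\chi_q(V)$ with coefficient $1$ and no strictly larger monomial appears, so $\chi_q(V)=\chi_q(M(P))$ forces $V$ and $M(P)$ to share the highest $l$-weight $\gamma_P^\pm(u)=q^{\deg P}P(uq^{-1})/P(uq)$ of Proposition~\ref{Prop:Stdhighest}, i.e. the same Drinfel'd polynomial $P$.

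The crux is the universal property. By \cite{CPweyl} the standard module $M(P)$ is isomorphic to the Weyl module attached to $P$, which is the maximal finite-dimensional highest $l$-weight module with highest $l$-weight $\gamma_P$: every finite-dimensional highest $l$-weight $\uqslth$-module generated by a vector of $l$-weight $\gamma_P$ is a quotient of it. Applying this to $V$ produces a surjection $\pi\colon M(P)\twoheadrightarrow V$. Since $\dim V=\dim M(P)<\infty$, the map $\pi$ must be an isomorphism, and $V\cong M(P)$ as required.

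I expect the genuine content to be concentrated entirely in this last step, namely the identification of $M(P)$ with the universal (Weyl) highest $l$-weight module of the given highest $l$-weight, which I would import wholesale from \cite{CPweyl}; the two preceding steps are routine. That $\chi_q$ detects total dimension is immediate from its definition as $\sum_\gamma\dim(V_\gamma)m_\gamma$ together with the exactness of the $l$-weight space decomposition, and that it detects the highest $l$-weight follows from the maximality of the dominant monomial. One could equivalently phrase the dimension comparison through additivity of $\chi_q$ on the short exact sequence $0\to\ker\pi\to M(P)\to V\to0$: a nonzero kernel would contribute a strictly positive $\chi_q(\ker\pi)$, contradicting $\chi_q(M(P))=\chi_q(V)$.
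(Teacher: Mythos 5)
Your proof is correct and follows exactly the route the paper intends: the paper gives no argument of its own for this lemma, simply attributing it to \cite{CPweyl}, and the substance is indeed the identification there of $M(P)$ with the Weyl module $W(P)$ and its universal property. Your two supporting steps --- that $\chi_q$ determines the total dimension, and that the highest $l$-weight is recovered as the unique maximal monomial of $\chi_q(M(P))$ --- are the routine bookkeeping needed to turn that citation into a complete proof, and they are sound.
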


\begin{proposition}
\label{Prop:Thin}
Let $P(u) = \prod_{i=1}^{\deg P} (1-a_i u)$, with $a_i\in \Cx$ for $1\leq i\leq\deg P$.
The standard module $M(P)$ has $q$-character
\be \chi_q(M(P)) = \prod_{i = 1}^{\deg P} \left ( Y_{a_i} + Y_{a_iq^2}^{-1}\right )\ee
It is thin if and only if $i\neq j$ $\implies$ $a_i\neq a_j$.
\end{proposition}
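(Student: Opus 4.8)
The plan is to treat the two assertions in turn, deriving the product formula first and then reading the thinness criterion off from it. For the formula I would use only that $\chi_q$ is a ring homomorphism on $\groth\uqslth$, whose multiplication is induced by the tensor product, so that $\chi_q\bigl([V]\,[W]\bigr)=\chi_q(V)\,\chi_q(W)$. By Definition \ref{Mdef} we have $M(P)\cong V_{a_1}\otimes\cdots\otimes V_{a_{\deg P}}$, hence $[M(P)]=\prod_i [V_{a_i}]$ in $\groth\uqslth$; together with the fundamental computation \eqref{qcharfund}, $\chi_q(V_{a_i})=Y_{a_i}+Y_{a_iq^2}^{-1}$, this yields the stated product at once.

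For the thinness statement the first step is to recall that, by the defining property of $\chi_q$, the dimension of each $l$-weight space equals the coefficient of the corresponding monomial; hence $M(P)$ is thin if and only if every monomial occurring in the expanded product has coefficient one. Since each factor $Y_{a_i}+Y_{a_iq^2}^{-1}$ has only positive coefficients, expanding writes $\chi_q(M(P))$ as a sum, over subsets $T\subseteq\{1,\dots,\deg P\}$, of the monomials
\beu
 m_T \;=\; \prod_i Y_{a_i}\;\cdot\;\prod_{i\in T}A_{a_iq}^{-1}, \qquad A_{b}:=Y_{bq^{-1}}Y_{bq},
\eeu
where $i\in T$ marks the factors from which the lowering term $Y_{a_iq^2}^{-1}=Y_{a_i}A_{a_iq}^{-1}$ was chosen. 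Each subset contributes $+1$, so the coefficient of a monomial is exactly the number of subsets producing it, and thinness is equivalent to injectivity of $T\mapsto m_T$.

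The final step is to decide this injectivity. Necessity is immediate: if $a_i=a_j$ with $i\neq j$, then the distinct singletons $T=\{i\}$ and $T'=\{j\}$ satisfy $A_{a_iq}^{-1}=A_{a_jq}^{-1}$, so $m_T=m_{T'}$ and that monomial has coefficient at least two, making $M(P)$ thick. For sufficiency I would establish the key algebraic fact that the family $(A_b)_{b\in\Cx}$ is multiplicatively independent in the group of Laurent monomials in the $Y_c$: reading off the exponent $n_{cq}+n_{cq^{-1}}$ of each $Y_c$ in a putative relation $\prod_b A_b^{n_b}=1$ forces the recursion $n_{dq^2}=-n_d$ along every $q^2$-string $\{dq^{2k}\}_{k\in\Z}$, and since $q$ is not a root of unity these strings are infinite, so finiteness of the support forces all $n_b=0$. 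Granting this, $m_T=m_{T'}$ holds precisely when the multisets $\{a_i:i\in T\}$ and $\{a_i:i\in T'\}$ coincide, and this map is injective exactly when the $a_i$ are pairwise distinct. The one genuinely non-routine point — the step I would set out most carefully — is this multiplicative independence of the $A_b$: it guarantees that the only coincidences among the monomials are those forced by equal spectral parameters, everything else being bookkeeping.
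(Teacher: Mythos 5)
Your proof is correct. For the product formula you follow exactly the paper's route --- the paper's entire proof is the one-liner ``immediate from the definition (\ref{MPdef}) of $M(P)$ together with the fact that $\chi_q$ is a homomorphism of rings,'' i.e.\ Definition \ref{Mdef} combined with \eqref{qcharfund} and multiplicativity of $\chi_q$. Where you go beyond the paper is the thinness criterion, which the paper also dismisses as immediate: your reduction to injectivity of $T\mapsto m_T$, and in particular the multiplicative independence of the variables $A_b=Y_{bq^{-1}}Y_{bq}$, is a genuine verification rather than bookkeeping, and it correctly isolates the one place where the standing hypothesis that $q$ is not a root of unity enters (the infinitude of the $q^2$-strings $\{dq^{2k}\}_{k\in\Z}$, which kills any finitely supported relation). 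This detail is not vacuous: if $q$ were a primitive fourth root of unity, then $\chi_q(V_a\otimes V_{aq^2})=(Y_a+Y_{aq^2}^{-1})(Y_{aq^2}+Y_a^{-1})$ contains the trivial monomial with coefficient $2$ even though $a\neq aq^2$, so the ``if'' direction of the thinness claim genuinely uses the root-of-unity assumption that your independence argument makes visible.
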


\begin{proof}
Immediate from the definition (\ref{MPdef}) of $M(P)$ together with the fact that $\chi_q$ is a homomorphism of rings. 
\finproof
\end{proof}

\noindent Intuitively, thickness arises as a consequence of having at least two fundamental factors with equal spectral parameters. Consider, for example, $V_{aq^{-1}}^{\otimes n}$, for any $n>1$. It is thick and, indeed,
\be\label{chiqVan} \chi_q (V_{aq^{-1}}^{\otimes n}) = \sum_{k=0}^n {n \choose k} Y_{aq^{-1}}^{n-k} Y_{aq}^{-k} \, ,\ee
illustrating the pattern in which multiplicities occur in $q$-characters of $\uqslth$-modules. 

In the next sections, in order to understand the Jordan structure of thick standard modules, we shall continuously interpolate between a \emph{generic} thin standard module and every thick standard module. To do so, we introduce the following
\begin{definition}
For all $n \in \mathbb N$ and all $\ais\in (\Cx)^n$ we write
\be \V_\ais = M\left(\prod_{i=1}^n \left( 1 - a_i  q^{-1} u\right)\right).\nn\ee
\end{definition}
$\V_\ais$ varies continuously with $\ais$ in the sense discussed above and
the thick standard modules with Drinfel'd polynomial of order $n$ are simply those $\V_\ais$ for which the elements of $\ais$ fail to be pairwise distinct; we shall say that they are \emph{coincident}. In sections \ref{Sec:ZA} and \ref{Sec:multiplepoints} below, thick standard modules are therefore studied as particular \emph{coincident} limits of some thin standard module. In practice, we construct an explicit basis of $\V_\ais$, for all $\ais \in (\Cx)^n$, from the one whose existence is guaranteed by the following
\begin{proposition}
\label{Prop:VBasis}
For all $\ais \in (\Cx)^n$ whose elements are pairwise distinct, there exists a basis $(\VV A)_{A \subseteq \range 1 n}$ of the standard module $\V_\ais$, in which
\bea \phi^\pm(u)\on \VV A &=& \VV A \prod_{j\in A}    \frac{q^{-1} - q a_j u}{1-a_j u} 
                                 \prod_{j\notin A} \frac{q - q^{-1} a_j u}{1-a_j u} \label{VAphiaction}\\
         x^+(u) \on \VV A &=& \sum_{j\in A}    \delta (a_j/u) \VV{A\setminus\{j\}} 
                                                    \prod_{k \notin A} \frac{a_jq-a_kq^{-1}}{a_j-a_k} \label{VAxpaction} \\
         x^-(u) \on \VV A &=& \sum_{j\notin A} \delta (a_j/u)\VV{A\cup \{j\}} 
                                                    \prod_{k \in A}  \frac{a_k q - a_j q^{-1}}{a_k - a_j} \label{VAxmaction}.\eea
\end{proposition}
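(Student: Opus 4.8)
The plan is to realize $\V_\ais$ concretely as the ordered tensor product $V_{a_1q^{-1}}\otimes\cdots\otimes V_{a_nq^{-1}}$ supplied by Definition \ref{Mdef} (reordering the factors if necessary so that the admissibility condition there holds, which is always possible since $q$ is not a root of unity; the formulas to be proved are covariant under permutations of $\ais$, so the choice of order is immaterial to the conclusion). First I would settle the base case $n=1$: on the two-dimensional fundamental module I fix a highest-weight vector together with its image under $x^-$, which reproduces (\ref{VAphiaction})--(\ref{VAxmaction}) for a single factor and pins down all conventions. For general $n$, the current coproduct (\ref{deltaphi}) is group-like on the Cartan currents, so each $\phi^\pm(u)$ acts diagonally on the pure tensors $e_A$ (the tensor whose $i$-th leg is lowered exactly when $i\in A$), with eigenvalue the product of the single-factor $l$-weights; this is precisely the right-hand side of (\ref{VAphiaction}). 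Since the $a_i$ are pairwise distinct these eigenvalues are pairwise distinct rational functions, so $\{e_A\}$ is already an eigenbasis for $\mathrm{U}^0_q(\widehat{\mathfrak{sl}}_2)$ and $\V_\ais$ is thin, in agreement with Proposition \ref{Prop:Thin}.

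The next step is to compute the raising and lowering currents on the $e_A$ from the iterated coproducts of (\ref{deltax+})--(\ref{deltax-}). Although these contain formally infinite sums, they become well defined on each $e_A$ because the $x^\pm$ factor produces a delta function while the accompanying $\phi^\mp$ factors act by rational eigenvalues: one simply uses $\delta(a_i/u)\,g(u)=\delta(a_i/u)\,g(a_i)$ to evaluate everything on the support $u=a_i$. This yields $x^-(u)\,e_A=\sum_{i\notin A}\delta(a_i/u)\,\mu_{A,i}\,e_{A\cup\{i\}}$ and $x^+(u)\,e_A=\sum_{i\in A}\delta(a_i/u)\,\nu_{A,i}\,e_{A\setminus\{i\}}$, where $\mu_{A,i}$ and $\nu_{A,i}$ are explicit products of evaluated $l$-weight factors coming from the legs to the left (resp.\ right) of position $i$. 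These are not yet the manifestly symmetric coefficients of (\ref{VAxpaction})--(\ref{VAxmaction}); they retain a dependence on the chosen ordering of the factors.

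I would then set $\VV A:=\lambda_A\,e_A$ and choose the scalars $\lambda_A$ to absorb this asymmetry. Writing $P^-_{A,i}:=\prod_{k\in A}\tfrac{a_kq-a_iq^{-1}}{a_k-a_i}$ and $P^+_{A,i}:=\prod_{k\notin A}\tfrac{a_iq-a_kq^{-1}}{a_i-a_k}$ for the target coefficients, requiring (\ref{VAxmaction}) forces the recursion $\lambda_{A\cup\{i\}}=\lambda_A\,\mu_{A,i}/P^-_{A,i}$, which determines all $\lambda_A$ from $\lambda_\emptyset=1$. The heart of the matter is to show this prescription is consistent and sufficient: (i) the recursion is path-independent, i.e.\ adding two indices in either order gives the same value; (ii) the \emph{same} $\lambda_A$ turn the $\nu_{A,i}$ into $P^+_{A,i}$, so that (\ref{VAxpaction}) holds simultaneously; and (iii) the outcome is independent of the admissible ordering used. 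I expect (i)--(iii) to be the main obstacle: each reduces to a family of rational-function identities in the $a_i$ and $q$ which, though elementary, encode the coassociativity of the iterated coproduct together with the exchange relation (\ref{x+x+1}) and the commutator (\ref{x+x-}), and so must be organized carefully rather than ground out blindly.

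Finally, a clean way to discharge these verifications at once---by-passing the explicit solution of the recursion---is to run the argument through Lemma \ref{lemma:stdunique}. Take the free vector space on symbols $\{\VV A\}_{A\subseteq\range{1}{n}}$, \emph{define} $\phi^\pm(u),x^\pm(u)$ by the right-hand sides of (\ref{VAphiaction})--(\ref{VAxmaction}), and check directly that they satisfy the defining relations (\ref{phiphi})--(\ref{x+x+1}). Here (\ref{phiphi}) is immediate from diagonality, (\ref{phix+})--(\ref{phix-}) follow by matching each rational prefactor to the corresponding $l$-weight shift, and the only substantive checks are (\ref{x+x-}), which ties the $P^+$ and $P^-$ coefficients together, and (\ref{x+x+1}). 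One then notes that $\VV\emptyset$ is a highest-$l$-weight vector of $l$-weight $\gamma_P^\pm$, that the module is generated from it by $\mathrm{U}^-_q(\widehat{\mathfrak{sl}}_2)$ (a nonzero chain of lowerings reaching every $\VV A$ exists because the $a_i$ are distinct and $q$ is not a root of unity), and that its $q$-character equals $\prod_i\left(Y_{a_iq^{-1}}+Y_{a_iq}^{-1}\right)=\chi_q(M(P))$ by Proposition \ref{Prop:Thin}. Lemma \ref{lemma:stdunique} then identifies this module with $\V_\ais$, and the $\VV A$ are the asserted basis.
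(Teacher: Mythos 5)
Your proposal is correct, and its decisive final paragraph is precisely the paper's own proof: define $\phi^\pm(u)$, $x^\pm(u)$ on the abstract vectors $\VV{A}$ by the stated formulas, check the defining relations (\ref{phiphi})--(\ref{x+x+1}), observe that $\VV{\emptyset}$ generates a highest-$l$-weight module whose $q$-character equals $\chi_q(\V_\ais)$, and conclude via Lemma \ref{lemma:stdunique}. The earlier tensor-product/coproduct computation with the rescaling recursion is scaffolding you rightly bypass; it is not needed once the uniqueness lemma is invoked.
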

\begin{proof}
It is straightforward to check that, given (\ref{VAphiaction}-\ref{VAxmaction}), the defining relations (\ref{phiphi}-\ref{x+x+1}) are realized on the set of vectors $(\VV A)_{A \subseteq \range 1 n}$. The latter therefore spans a thin $\uqslth$-module, $V$, whose $q$-character is equal, by construction, to $\chi_q(\V_\ais)$. Furthermore, $V$ is a highest $l$-weight module: it admits $\VV{\emptyset}$ as a highest $l$-weight vector and $V= \mathrm{U}_q^-(\widehat{\mathfrak{sl}}_2) \on \VV{\emptyset}$. By lemma \ref{lemma:stdunique}, we thus have $V \cong \V_\ais$ as $\uqslth$-modules; hence the result. \finproof
\end{proof}

\section{The $n$-fold coincident limit}\label{Sec:ZA}
Proposition \ref{Prop:VBasis} provides an explicit basis of the standard module $\V_\ais$, valid whenever the elements of $\ais$ are pairwise distinct. In the remainder of the paper we consider limits in which certain elements of $\ais$ coincide.
We begin, in this section, by considering the case in which all the elements of $\ais$ tend to a common value, $a$; subsequently, in section \ref{Sec:multiplepoints}, we go to the general case.

Thus, let us consider the \emph{$n$-fold coincident limit} obtained by first setting 
\be a_i = a + z_i,\qquad z_i = \epsilon \alpha_i\qquad \forall i \in \range 1 n,\ee
for some $a\in \Cx$ and pairwise distinct $\alpha_1,\dots,\alpha_n\in\Cx$, and then sending $\eps\in\CC$ to zero. It is apparent from (\ref{VAxpaction}-\ref{VAxmaction}) that various matrix elements are singular in this limit: for example, $x^+_0 \on \VV{\{j\}} \sim \eps^{-1} \VV\emptyset$. On the other hand, the module $\V_\ais$ itself is known to be well-defined for all $\ais$ in a neighbourhood $\mathcal B\subset (\Cx)^n$ of the point $(a,\dots,a)\in (\Cx)^n$, so these singularities must be an artifact of the choice of basis. Our goal is thus to construct a new basis, say \be\left( \ZZ A\right)_{A\subseteq \range 1 n}\ee of $\V_\ais$ in which the limit makes sense: that is, a basis in terms of which the matrix elements are regular for all $\ais\in \mathcal B$. We will work initially with $\eps\neq 0$ and define a new basis explicitly in term of the old:
\be \ZZ A = \sum_{B\subseteq\range 1 n}  \VV B M^B{}_A,\ee
where the change-of-basis matrix $M=M(\eps;\alpha_1,\dots,\alpha_n,a,q)$ must of course be non-singular and invertible for $\eps\neq 0$. In the limit $\eps\to 0$ we will demand that $M$ becomes non-invertible (and its inverse singular) in such a way as to absorb all the singularities of the matrix elements in the $\VV{}$-basis. 
That is, writing
\be x\on \VV A = \sum_{B\subseteq\range 1 n} \VV B \rho_{V}(x)^B{}_A, \qquad 
    x\on \ZZ A = \sum_{B\subseteq \range 1 n} \ZZ B \rho_{Z}(x)^B{}_A \ee
where $x\in\uqslth$, we demand that all the matrix elements
\be \rho_{Z}(x)^B{}_A:= \left((M^{-1})\cdot \rho_{ V}(x)\cdot M\right)^B{}_A\label{goodbasis}\ee
remain regular. The degeneracy in $M$ as $\eps\to 0$ will allow $\rho_{ Z}(\phi^{\pm}(u))$ to develop nontrivial Jordan blocks, even though $\rho_V(\phi^\pm(u))$ is diagonal (and not, itself, singular in the limit). 

With this outline in place, let us proceed to define the basis $\ZZ A$ and show it has the required properties. First,  associate to each subset $A\subseteq\range 1 n$ a path in the plane $\mathbb R^2$, its \emph{lattice path}, 
as follows: start at the origin and, for each integer $j\in\range 1 n$ in turn, extend the path by adding a line segment of unit length, in the direction $(0,1)$ if $j\in A$ and $(1,0)$ if $j\notin A$. All these lattice paths have length $n$ and no two distinct subsets of $\range 1 n$ yield the same lattice path. An example is shown in figure \ref{staircasefig}.

Suppose $A\subseteq \range 1 n$ has $m$ elements. We shall write $(A_i)_{\ir m}$ for the $m$-tuple consisting of the elements of $A$ in increasing order, \ie $A= \{A_1,A_2,\dots, A_{m}\}$, $A_1<A_2<\dots<A_{m}$. Define $\lambdaof A$ to be the partition  (\ie weakly decreasing sequence of non-negative integers, which it is convenient here to allow to end with one or more zeros) whose $m$ parts are given by
\be \lambdaof{A}_{m+1-i} := A_i - i,\qquad \ir m.\ee
The Ferrers diagram of $\lambdaof A$ is the subset of the rectangle with vertices $(0,0)$, $(n-m,0)$, $(n-m,m)$ and $(0,m)$ that lies above the lattice path of $A$. Let also $\comp A := \range 1 n \setminus A$, and  
observe that 
 $|\lambdaof A| + |\lambdaof {\comp A}| = |A|\cdot |\comp A|$, where $|\lambda|:=\sum_i\lambda_i$.  
We define the \emph{Schur polynomial} $s_\lambda(z_1,\dots ,z_m)$ associated to any partition $\lambda$ with $m$ parts by 
\be s_\lambda(z_1,\dots ,z_m) = \frac{\det(z_{j}^{\lambda_{m+1-i}+i-1})_{\ijr m} }
                                     {\det(z_{j}^{        i-1}  )_{\ijr m} } \label{schurdef}.\ee

\begin{figure}
\begin{center}
\tikz[baseline=0pt,scale=.75]{\def \n{9}; \foreach \x in {0,...,\n}
                    { \draw (\x,0)--(\x,\n-\x) ;
                      \draw (0,\x)--(\n-\x,\x) ;  }
\draw[very thick] (0,0) -- node[above,sloped] {$1$} 
              (0,1) -- node[below]  {$2$}  
              (1,1) -- node[above,sloped]  {$3$} 
              (1,2) -- node[below]  {$4$} 
              (2,2) -- node[below]  {$5$}
              (3,2) -- node[above,sloped]  {$6$} 
              (3,3) -- node[above,sloped]  {$7$} 
              (3,4) -- node[below]  {$8$} 
              (4,4) -- node[below]  {$9$} (5,4);}
\end{center}

\be\nn
\tikz[baseline=0pt,scale=.75,x={(0cm,1cm)},y={(1cm,0cm)}]{
\path [fill=gray!50] (0,0) -- +(1,0) -- +(1,1) -- +(2,1) -- +(2,3) -- +(3,3) -- +(4,3) -- +(4,5) -- +(4,0)--cycle;
\path (0,0) -- node[left] {$0)$} +(1,0) -- node[left] {$1,$} +(2,0) 
            -- node[left] {$3,$} +(3,0) -- node[left] {$(3,$} +(4,0);
\draw[gray] (0,0) grid +(4,5);
\draw [very thick]  (0,0) -- +(1,0) -- +(1,1) -- +(2,1) -- +(2,3) -- +(3,3) -- +(4,3) -- +(4,5);
}\qquad
\tikz[baseline=0pt,scale=.75]{
\path [fill=gray!50] (0,0) -- +(1,0) -- +(1,1) -- +(2,1) -- +(2,3) -- +(3,3) -- +(4,3) -- +(4,5) -- +(0,5)--cycle;
\path (0,0)+(0,5) -- node[left]  {$(4,$} +(0,4) -- node[left]  {$\phantom ( 4,$} +(0,3)
            -- node[left]  {$\phantom ( 2,$} +(0,2) -- node[left]  {$\phantom ( 2,$} +(0,1)
            -- node[left]  {$1)$} +(0,0);
\draw[gray] (0,0) grid +(4,5);
\draw [very thick] (0,0) -- +(1,0) -- +(1,1) -- +(2,1) -- +(2,3) -- +(3,3) -- +(4,3) -- +(4,5);
}\ee
\caption{\label{staircasefig} With $n=9$, the lattice path of the subset $\{1,3,6,7\}\subset \range 1 9$ and the Ferrers diagrams of the partitions $\lambdaof {\{1,3,6,7\}} = (3,3,1,0)$ and $\lambdaof {\overline{\{1,3,6,7\}}} = \lambdaof {\{2,4,5,8,9\}}=(4,4,2,2,1)$.}
\end{figure}

\begin{definition}\label{Zdef}
For all $A\subseteq \range 1 n$, let
\be \ZZ A  = \sum_{\substack{B\subseteq \range 1 n \\  |B|=|A|}} \frac{\det(z_{B_j}^{A_i-1})_{\ijr |B|} }
                                                                  {\det(z_{B_j}^{  i-1})_{\ijr |B|} }  \VV B  
         = \sum_{\substack{B\subseteq \range 1 n \\  |B|=|A|}} s_{\lambdaof A}(z_B) \VV B ,\ee
where, for any $B\subseteq\range 1 n$, we adopt the shorthand $z_B = (z_{B_1},\dots, z_{B_{|B|}})$.
\end{definition}
We need to be able to invert this change of basis. To that end, if $B\subseteq\range 1 n$ with $k=|B|$, define the permutation $w_B\in S_n$ to be 
\be w_B={\small \left(\begin{matrix} 1  &  \dots & k&k+1 & \dots &n \\
                    \down &  &  \down&  \down& & \down \\ 
                         B_1  & \dots & B_k & \comp B_1 & \dots & \comp B_{n-k} \end{matrix}\right)}\ee
and note that 
\be \sgn(w_B)= \prod_{\substack{i\in B\\j\notin B}} \sign(j-i) = (-1)^{|\lambdaof B|}\label{epssigneqn}\ee
and hence $\sgn(w_B)\sgn(w_{\comp B}) = (-1)^{|B|\cdot |\comp B|}$. 
Given any matrix $(m_{i,j})_{\ijr n}$, one has 
\bea \det (m_{i,j})_{\ijr n}  
&=&  \sum_{\substack{B\subseteq \range 1 n \\  |B|=k}}   \sgn(w_B)  \det(m_{i,B_j})_{\ijr k} \, \det(m_{k+i,\comp B_j})_{\ijr{n-k}}\label{detsplit}\eea 
for all $0\leq k \leq n$.
Let $\Delta = \det (z_j^{i-1})_{\ijr n} = \prod_{1\leq i<j\leq n} (z_j - z_i)$ denote the Vandermonde determinant in the variables $z_1,\dots, z_n$. Clearly, for any $n$-tuple $(I_1,I_2,\dots, I_n)\in \range 1 n ^n$, 
\be \det (z_{j}^{I_i-1})_{\ijr n}  = \begin{cases} 
\sgn(\sigma) \Delta & \text{if } \exists \,\sigma \in S_n \text{ s.t. } I_i = \sigma i\,\,\forall i \in\range 1 n\\          
0 & \text{otherwise.}\end{cases}\ee
If $A$ and $C$ are both subsets of $\range 1 n$ with $k$ elements, let $A \concat \comp C\in \range 1 n^n$ be the $n$-tuple obtained by concatenating $(A_i)_{\ir k}$ and $(\comp C_i)_{\ir {n-k}}$; then in particular
\be \det ( z_j^{(A \concat \comp C)_i-1} )_{\ijr n}
=  \sgn(w_A)  \Delta  \, \delta_{A,C} =  \sgn(w_C) \Delta  \, \delta_{A,C} ,\ee
where of course $\delta_{A,C}$ is defined to be 1 if $A=C$ and 0 otherwise. 
But 
by (\ref{detsplit}), 
\be \det ( z_j^{(A \concat \comp C)_i-1 })_{\ijr n}
   = \sum_{\substack{B\subseteq \range 1 n \\  |B|=k}}  \sgn(w_B) 
\det(z^{A_i-1}_{B_j})_{\ijr k} \, \det(z^{\comp C_i-1}_{\comp B_j})_{\ijr{n-k}}.\ee
Comparing these two expressions, and noting that for any subset $B \subseteq \range 1 n$ with $k$ elements the Vandermonde determinant can be factored as
\bea\nn \Delta &=& \prod_{\substack{ i\in B \\ j \notin B}} (z_i - z_j) \sign(i-j) 
\prod_{\substack{ i<j \\ i,j \in B}} (z_j - z_i) \prod_{\substack{ i<j \\ i,j \notin B}} (z_j - z_i)  \\
  &=& (-1)^{|B|\cdot|\comp B|} \sgn(w_B)  \prod_{\substack{ i\in B \\ j \notin B}} (z_i - z_j)  \det (z^{i-1}_{B_j})_{\ijr k} \det(z^{i-1}_{\comp B_j})_{\ijr{n-k}}\label{vdmfactored}, 
\eea
we find
\bea \delta_{A,C} 
&=& (-1)^{|C|\cdot |\comp C|} \sum_{\substack{B\subseteq \range 1 n \\  |B|=k}}  
\frac{\det(z^{A_i-1}_{B_j})_{\ijr k}}{\det(z^{i-1}_{B_j})_{\ijr k}} \, 
\frac{\det(z^{\comp C_i-1}_{\comp B_j})_{\ijr{n-k}}}{\det(z^{i-1}_{\comp B_j})_{\ijr{n-k}}} 
\frac{\sgn(w_C)}{ \prod_{\substack{ i\in B \\ j \notin B}} (z_i - z_j)}.\label{deltaid}\nn\\
&=& \label{deltarel} \sum_{\substack{B\subseteq \range 1 n \\  |B|=k}}  
s_{\lambdaof A}(z_B) 
s_{\lambdaof {\comp C}}(z_{\comp B})
\frac{\sgn(w_{\comp C})}{ \prod_{\substack{ i\in B \\ j \notin B}} (z_i - z_j)}\eea
The above result allows us to recover the $\VV B$ in terms of the $\ZZ A$, for we have established
\begin{proposition} For all $B\subseteq \range 1 n$,   \label{VtoZ}
\be\nn \VV B = \frac{1}{ \prod_{\substack{ i\in B \\ j \notin B}} (z_i - z_j)}
 \sum_{\substack{C\subseteq \range 1 n \\  |C|=|B|}} \sgn(w_{\comp C}) s_{\lambdaof {\comp C}}(z_{\comp B}) \ZZ C.\ee
\end{proposition}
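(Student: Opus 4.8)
The plan is to read the claimed identity simply as the inverse of the change of basis recorded in Definition \ref{Zdef}, and to recognise that the orthogonality relation (\ref{deltarel}), established just above, already hands us that inverse. Fix the common cardinality $k=|B|$ and let $A,B,C$ range over the $k$-element subsets of $\range 1 n$. Definition \ref{Zdef} then reads $\ZZ A = \sum_{B} M_{A,B}\,\VV B$ with
\[
M_{A,B} \;=\; s_{\lambdaof A}(z_B),
\]
so that $M$ is a square matrix, of size $\binom nk$, with entries in the field $\CC(z_1,\dots,z_n)$. The proposition asserts that its inverse has entries
\[
N_{B,C} \;=\; \frac{\sgn(w_{\comp C})\, s_{\lambdaof{\comp C}}(z_{\comp B})}{\prod_{\substack{i\in B\\ j\notin B}}(z_i-z_j)}.
\]

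First I would rewrite (\ref{deltarel}) in exactly this matrix language. Reading off its summand, the right-hand side of (\ref{deltarel}) is precisely $\sum_{B} M_{A,B}\,N_{B,C}$, the sum running over $k$-element subsets $B$; hence (\ref{deltarel}) is nothing but the statement $MN=I$. Because $M$ and $N$ are finite square matrices over a field, a one-sided inverse is automatically two-sided, so $N=M^{-1}$ and therefore also $NM=I$. Inverting $\ZZ=M\VV$ then yields $\VV=N\ZZ$, i.e.
\[
\VV B \;=\; \sum_{\substack{C\subseteq\range 1 n\\ |C|=|B|}} N_{B,C}\,\ZZ C
\;=\; \frac{1}{\prod_{\substack{i\in B\\ j\notin B}}(z_i-z_j)} \sum_{\substack{C\subseteq\range 1 n\\ |C|=|B|}} \sgn(w_{\comp C})\, s_{\lambdaof{\comp C}}(z_{\comp B})\,\ZZ C,
\]
where the prefactor depends on $B$ alone and so pulls out of the sum over $C$. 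This is exactly the asserted identity.

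Since (\ref{deltarel}) is already in hand, there is no genuinely hard step left: the argument is pure bookkeeping. The only points requiring care are matching the two Schur-polynomial arguments correctly — the partition index $\lambdaof{\comp C}$ pairs with the complementary evaluation points $z_{\comp B}$ while $\lambdaof A$ pairs with $z_B$ — and checking that the sign $\sgn(w_{\comp C})$ and the denominator $\prod_{i\in B,\,j\notin B}(z_i-z_j)$ are attached to the correct indices $C$ and $B$. Observe too that invertibility of $M$ need not be argued separately: the relation $MN=I$ for square matrices already forces it, which is precisely what legitimates passing from the one-sided identity (\ref{deltarel}) to a true change of basis $\VV=N\ZZ$. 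Had (\ref{deltarel}) not been available, the real obstacle would have been its derivation, namely the Laplace-type splitting (\ref{detsplit}) of the full Vandermonde and the factorization (\ref{vdmfactored}) isolating the cross term $\prod_{i\in B,\,j\notin B}(z_i-z_j)$; with those in place the inversion is immediate.
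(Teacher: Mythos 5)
Your proof is correct and takes essentially the same route as the paper, which states the proposition as an immediate consequence of the orthogonality relation (\ref{deltarel}). The only difference is that you make explicit the bookkeeping the paper leaves implicit, namely that (\ref{deltarel}) gives $MN=I$ while the change of basis requires $NM=I$, and that these coincide for square matrices over the field $\CC(z_1,\dots,z_n)$.
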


For all $0\leq k\leq n$, let $\Lambda_k$ denote the ring of symmetric polynomial functions of $k$ variables with complex coefficients. When we wish to specify that the variables are $z_1,\dots, z_k$ we will write $\mathbb C[z_1,\dots,z_k]^{S_k}$ (clearly $\Lambda_k\cong\mathbb C[z_1,\dots,z_k]^{S_k}$ as rings) and, more generally, if $B\subseteq \range 1 n$ has $k$ elements and $p\in\Lambda_{k}$ we will continue to write $p(z_B)$ as a shorthand for $p(z_{B_1},\dots,z_{B_{k}})$. The first important property of the basis $\ZZ A$ is then contained in the following

\begin{proposition}\label{innerprop} Let $0\leq k \leq n$. For any $p\in\Lambda_k$ and any $\tilde p \in \Lambda_{n-k}$, \be\sum_{\substack{B\subseteq \range 1 n\\ |B|=k}} p(z_B) \VV B\qquad\text{and}\qquad\sum_{\substack{B\subseteq \range 1 n\\ |B|=k}} \tilde p(z_{\comp B}) \VV B\ee are both linear combinations of the $\ZZ A$ with coefficients in $\mathbb C[z_1,\dots z_n]^{S_n}$.
\end{proposition}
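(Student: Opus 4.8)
The plan is to obtain the coefficients explicitly from the inversion formula of Proposition \ref{VtoZ}, and then to show that each coefficient, though a priori only a symmetric \emph{rational} function, is in fact a polynomial. Substituting Proposition \ref{VtoZ} into the first sum and collecting the coefficient of $\ZZ C$ gives
\[
\sum_{\substack{B\subseteq \range 1 n\\ |B|=k}} p(z_B)\,\VV B
 = \sum_{\substack{C\subseteq \range 1 n\\ |C|=k}} c_C\,\ZZ C,\qquad
c_C := \sgn(w_{\comp C})\!\!\sum_{\substack{B\subseteq \range 1 n\\ |B|=k}}
 \frac{p(z_B)\,s_{\lambdaof{\comp C}}(z_{\comp B})}{\prod_{\substack{i\in B\\ j\notin B}}(z_i-z_j)},
\]
and the analogous expression for the second sum, with $p(z_B)$ replaced by $\tilde p(z_{\comp B})$. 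Since $\sgn(w_{\comp C})=\pm1$, both claims reduce to a single lemma: for any $f\in\Lambda_k$ and any $g\in\Lambda_{n-k}$ the quantity
\[
\Sigma(f,g):=\sum_{\substack{B\subseteq \range 1 n\\ |B|=k}}
 \frac{f(z_B)\,g(z_{\comp B})}{\prod_{\substack{i\in B\\ j\notin B}}(z_i-z_j)}
\]
lies in $\mathbb C[z_1,\dots,z_n]^{S_n}$. Indeed, the first coefficient is $\Sigma\big(p,\,s_{\lambdaof{\comp C}}\big)$ up to sign, while the second is $\Sigma\big(1,\,\tilde p\cdot s_{\lambdaof{\comp C}}\big)$, using that a product of symmetric functions of $z_{\comp B}$ is again symmetric.

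First I would check that $\Sigma(f,g)$ is $S_n$-invariant. Acting by $\sigma\in S_n$ on the variables permutes the index sets via $B\mapsto\sigma(B)$; since $f(z_B)$ depends only on the multiset $\{z_i\}_{i\in B}$, $g(z_{\comp B})$ only on $\{z_i\}_{i\notin B}$, and the denominator transforms into $\prod_{i'\in\sigma(B),\,j'\notin\sigma(B)}(z_{i'}-z_{j'})$, the individual summands are merely permuted and $\Sigma(f,g)$ is unchanged. Thus $\Sigma(f,g)$ is a symmetric rational function, and it remains only to prove it is a polynomial.

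The hard part, and the heart of the argument, is to rule out poles. A symmetric rational function can only have codimension-one poles along the diagonals $z_a=z_b$, so by symmetry it suffices to treat $z_1=z_2$. I would split the sum over $B$ according to how $B$ meets $\{1,2\}$: terms with $\{1,2\}\subseteq B$ or $\{1,2\}\subseteq\comp B$ carry no factor $(z_1-z_2)$ and are regular there, while terms with exactly one of $1,2$ in $B$ have a simple pole and come in pairs $(B,B')$ exchanged by $1\leftrightarrow2$. The key computation is that the two residues of each such pair cancel: on $z_2=z_1$ one has $f(z_B)=f(z_{B'})$ and $g(z_{\comp B})=g(z_{\comp{B'}})$ (the relevant multisets coincide and $f,g$ are symmetric), and the reduced denominators obtained by deleting the single factor $(z_1-z_2)$, respectively $(z_2-z_1)$, become equal on $z_1=z_2$; the opposite signs of the omitted factors then make the residues equal and opposite. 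Since each singular term has only a simple pole at the generic point of $\{z_1=z_2\}$ and the total residue vanishes, $\Sigma(f,g)$ is regular there; hence $(z_1-z_2)$, and by symmetry every $(z_i-z_j)$, fails to divide its denominator in lowest terms, so $\Sigma(f,g)$ is a symmetric polynomial.

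Applying the lemma to the two expressions above then gives $c_C,\tilde c_C\in\mathbb C[z_1,\dots,z_n]^{S_n}$, which is exactly the assertion. I expect the residue cancellation of the paired terms to be the only genuine obstacle; the reduction via Proposition \ref{VtoZ} and the $S_n$-invariance of $\Sigma(f,g)$ are formal. It is worth emphasising that individual $\VV B$ are \emph{not} polynomial combinations of the $\ZZ A$ (their expansion in Proposition \ref{VtoZ} has the denominator $\prod_{i\in B,\,j\notin B}(z_i-z_j)$), so the content of the proposition is precisely that symmetrising in $z_B$ or in $z_{\comp B}$ clears these denominators.
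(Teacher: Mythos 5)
Your proof is correct, but it takes a genuinely different route from the paper's. Both arguments begin identically, by substituting Proposition \ref{VtoZ} and isolating the coefficient of $\ZZ C$; from there the paper is computational: it expands $p$ (resp.\ $\tilde p$) in the Schur basis and uses the Vandermonde factorization (\ref{vdmfactored}) together with the splitting identity (\ref{detsplit}) --- plus Littlewood--Richardson coefficients for the second sum --- to evaluate each coefficient in closed form, finding that it is either zero or, up to sign, a single Schur polynomial $s_\rho(z_1,\dots,z_n)$. You instead prove one uniform lemma, that $\Sigma(f,g)=\sum_{B}f(z_B)g(z_{\comp B})\big/\prod_{i\in B,\,j\notin B}(z_i-z_j)$ is a symmetric polynomial for arbitrary symmetric $f,g$, by an elementary residue-cancellation argument along the diagonals, and you observe that both sums are instances of it (the second as $\Sigma(1,\tilde p\,s_{\lambdaof{\comp C}})$ --- a unification the paper does not make, since it treats the two sums with separate manipulations). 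Your pairing $B\leftrightarrow B'=(B\setminus\{1\})\cup\{2\}$, the equality of numerators and of reduced denominators on $z_1=z_2$, and the sign flip from the omitted factor are all exactly right, so the lemma holds; your approach is more elementary (no Schur basis, no Pieri/LR combinatorics) and proves a more general statement. What it gives up is explicitness: the closed form (\ref{leftin}) obtained in the paper's proof is reused later, in the proof of Proposition \ref{phiaction}, where one needs that coefficients attached to partitions not fitting in the relevant box have \emph{strictly positive degree} and hence vanish in the coincident limit. Your lemma can supply this too, but only after adding the (easy) remark that $\Sigma(f,g)$ is homogeneous of degree $\deg f+\deg g-k(n-k)$ when $f,g$ are homogeneous. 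One small wording slip: ``a symmetric rational function can only have codimension-one poles along the diagonals'' is false in general (consider $1/(z_1\cdots z_n)$); what you need, and what is true here, is that your specific $\Sigma(f,g)$, put over the common denominator $\prod_{i<j}(z_i-z_j)$, manifestly can have poles only along the diagonals.
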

\begin{proof}
Since the Schur polynomials form a $\mathbb C$-basis for $\Lambda_k$ \cite{MacDonald}, it suffices in the first sum to consider $p=s_\mu$ for all partitions $\mu$ having $k$ parts. (We allow the possibility that some of the parts are zero, as noted above.) And then indeed, using proposition \ref{VtoZ} and (\ref{schurdef}), one has that the first sum is equal to 
\bea   \label{smuB} &&
\sum_{\substack{B\subseteq \range 1 n\\ |B|=k}}  
                              s_{\mu}(z_B)
\sum_{\substack{C\subseteq \range 1 n \\  |C|=|B|}} 
\frac{\sgn(w_{\comp C}) s_{\lambdaof {\comp C}}(z_{\comp B})}{ \prod_{\substack{ i\in B \\ j \notin B}} (z_i - z_j)} \ZZ C\\
&=& \sum_{\substack{C\subseteq \range 1 n \\  |C|=k}} \sgn(w_{\comp C}) \ZZ C \sum_{\substack{B\subseteq \range 1 n\\ |B|=k}}  
                              \frac{\det(z_{B_j}^{\mu_{k+1-i}+i-1} )_{\ijr k} }
                                   {\det(z_{B_j}^{        i-1})_{\ijr k} }
\frac{\det(z^{\comp C_i-1}_{\comp B_j})_{\ijr{n-k}}}{\det(z^{i-1}_{\comp B_j})_{\ijr{n-k}}} 
\frac{1}{ \prod_{\substack{ i\in B \\ j \notin B}} (z_i - z_j)}\nn .\eea
This is in the right form to apply (\ref{vdmfactored}) and then (\ref{detsplit}); on doing so one has simply 
\be \sum_{\substack{B\subseteq \range 1 n\\ |B|=k}} s_\mu(z_B) \VV B 
   = \sum_{\substack{C\subseteq \range 1 n \\  |C|=k}} \sgn(w_{C}) \ZZ C \frac{1}{\Delta} 
      \det(z_j^{\tau_i-1})_{\ijr n}\label{leftin}\ee
where $(\tau_i)_{\ir n}$ is the sequence formed by concatenating $(\mu_{k+1-i}+i)_{\ir k}$ and $(\comp C_i)_{\ir {n-k}}$ (whose entries, unlike those of $A \concat \comp C$ above, need not be elements of $\range 1 n$, since the parts of $\mu$ can be arbitrarily large). For any $C$ such that the $\tau_i$ are not all distinct, the coefficient of $\ZZ C$ vanishes. 
Otherwise, $(\tau_i)_{\ir n}$ is a permutation of the strictly increasing sequence $(\rho_{n+1-i}+i)_{\ir n}$ defined by some partition $\rho$ with $n$ parts and, up to a sign, the coefficient of $\ZZ C$ is $s_\rho \in \mathbb C[z_1,\dots z_n]^{S_n}$, as required.

Likewise, in the second sum it suffices to consider the Schur polynomials, $\tilde p = s_\mu$, for partitions $\mu$ with $n-k$ parts. Let $c_{\mu\nu}^\rho$ be the Littlewood-Richardson coefficients, \ie $s_\mu s_\nu = \sum_{\rho} c_{\mu\nu}^\rho s_\rho$. Then
\bea   \sum_{\substack{B\subseteq \range 1 n\\ |B|=k}} s_\mu(z_{\comp B}) \VV B
&=& \sum_{\substack{C\subseteq \range 1 n \\  |C|=k}}
                  \sgn(w_{\comp C})  \ZZ C \sum_\rho  c_{\mu \lambdaof {\comp C}}^\rho     
\sum_{\substack{B\subseteq \range 1 n\\ |B|=k}} 
\frac{s_{\rho}(z_{\comp B})}{ \prod_{\substack{ i\in B \\ j \notin B}} (z_i - z_j)} \eea
and here we can use similar manipulations to those above -- that is,
\bea \sum_{\substack{B\subseteq \range 1 n\\ |B|=k}} 
\frac{s_{\rho}(z_{\comp B})}{ \prod_{\substack{ i\in B \\ j \notin B}} (z_i - z_j)} 
&=& 
\sum_{\substack{B\subseteq \range 1 n\\ |B|=k}}  
  \frac{\det(z_{B_j}^{i-1} )_{\ijr k} }                  {\det(z_{B_j}^{i-1})_{\ijr k} }
  \frac{\det(z^{ \rho_{n-k+1-i}+i-1}_{\comp B_j})_{\ijr{n-k}}}{\det(z^{i-1}_{\comp B_j})_{\ijr{n-k}}} 
  \frac{1}{ \prod_{\substack{ i\in B \\ j \notin B}} (z_i - z_j)}\nn\\ 
&=& (-1)^{|B|\cdot |\comp B|} \frac{1}{\Delta} \det(z_j^{\tau_i-1})_{\ijr n}\label{rightin}\eea
where now $(\tau_i)_{\ir n}$ is the concatenation of $(1,\dots, k)$ with $(\rho_{n-k+1-i}+i)_{\ir{n-k}}$; and the result is once more a symmetric polynomial in $z_1,\dots z_n$. \finproof
\end{proof}

We will now argue that the $\ZZ A$ are a good basis in the $\eps\to 0$ limit in the sense discussed above, c.f. (\ref{goodbasis}). First, we have

\begin{proposition}\label{loweringprop} Let $m\geq 0$ and $\tau=(\tau_1,\dots,\tau_m)\in \Z^m$. Then 
\be x^-_{\tau_m}\on \dots \on x^-_{\tau_1} \on \VV\emptyset 
   = \sum_{\substack{B\subseteq \range 1 n\\ |B|=m}} R_{\tau} (a_B) \VV B \label{loweringmodeswrtVs}\ee
where $R_\tau$ is a symmetric Laurent polynomial \footnote{\ie a polynomial in the variables and their inverses}  in $m$ variables defined by
\be R_\tau(y_1,\dots,y_m) = \sum_{\sigma\in S_m}  y^{\tau_1}_{\sigma 1} \dots y^{\tau_m}_{\sigma m}
      \prod_{i<j} \frac{qy_{\sigma j}-q^{-1} y_{\sigma i}}{y_{\sigma j}-y_{\sigma i}}. \label{Rdef}\ee
\end{proposition}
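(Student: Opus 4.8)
The plan is to prove (\ref{loweringmodeswrtVs}) by induction on $m$, stripping off one lowering generator at a time by means of the explicit action (\ref{VAxmaction}) and matching the result against a one-step recursion satisfied by the symmetric functions $R_\tau$. The base case $m=0$ is immediate: the left-hand side is $\VV\emptyset$, while on the right only $B=\emptyset$ contributes, and for the empty sequence the single (empty) permutation gives the empty product $1$, so the right-hand side is also $\VV\emptyset$. (Equivalently one may start from $m=1$, where (\ref{VAxmaction}) gives $x^-_{\tau_1}\on\VV\emptyset=\sum_j a_j^{\tau_1}\VV{\{j\}}$, in agreement with $R_{(\tau_1)}(y)=y^{\tau_1}$.)

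For the inductive step, assume the claim for $m-1$, with $\tau'=(\tau_1,\dots,\tau_{m-1})$, so that $x^-_{\tau_{m-1}}\on\cdots\on x^-_{\tau_1}\on\VV\emptyset=\sum_{|B'|=m-1}R_{\tau'}(a_{B'})\VV{B'}$. Applying $x^-_{\tau_m}$, expanding each $x^-_{\tau_m}\on\VV{B'}$ with (\ref{VAxmaction}), and reindexing the resulting basis vectors by $B=B'\cup\{j\}$, I read off the coefficient of $\VV B$ (for $|B|=m$) as
\be \sum_{j\in B} a_j^{\tau_m}\left(\prod_{\substack{k\in B\\ k\neq j}}\frac{a_k q - a_j q^{-1}}{a_k - a_j}\right) R_{\tau'}(a_{B\setminus\{j\}}).\ee
It then suffices to show that this expression equals $R_\tau(a_B)$.

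That identity is a one-step recursion read off directly from the definition (\ref{Rdef}). Writing $y_i=a_{B_i}$ and letting $y_{\hat l}=(y_k)_{k\neq l}$, I group the sum over $S_m$ according to the variable $y_l$ occupying the last slot, $\sigma(m)=l$: the pair-factors indexed by $(i,m)$ collect into $\prod_{k\neq l}\frac{q y_l - q^{-1}y_k}{y_l - y_k}$, the last monomial slot contributes $y_l^{\tau_m}$, and the sum over permutations of the remaining $m-1$ slots is exactly $R_{\tau'}(y_{\hat l})$. This yields
\be R_\tau(y_1,\dots,y_m) = \sum_{l=1}^m y_l^{\tau_m}\left(\prod_{k\neq l}\frac{q y_l - q^{-1}y_k}{y_l - y_k}\right) R_{\tau'}(y_{\hat l}),\ee
which, under $y_l=a_j$, coincides with the module recursion of the previous paragraph and closes the induction. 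The step I expect to be the main obstacle is precisely this matching: one must track carefully the placement of $q$ versus $q^{-1}$ and the signs of the difference factors (here $y_l-y_k$, there $a_k-a_j$), verifying that the product isolated from the last slot of $R_\tau$ reproduces the factor $\prod_k\frac{a_k q - a_j q^{-1}}{a_k-a_j}$ of (\ref{VAxmaction}) rather than its $q\leftrightarrow q^{-1}$ counterpart.

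Finally, the two structural claims about $R_\tau$ are handled independently of the induction. Symmetry in $y_1,\dots,y_m$ is clear from (\ref{Rdef}): substituting $(y_i)\mapsto(y_{\pi i})$ and reindexing $\sigma\mapsto\pi\sigma$ returns the same sum. To see that $R_\tau$ is a genuine Laurent polynomial rather than merely a symmetric rational function, I would check that the apparent simple poles along each diagonal $y_i=y_j$ cancel: the two permutations related by transposing the slots carrying $y_i$ and $y_j$ produce opposite residues there, so the symmetrized sum is regular. (Negative exponents, and hence Laurent rather than ordinary polynomials, arise only because the $\tau_i$ may be negative.) This regularity is the standard feature of such $q$-symmetrizations and is exactly what will later make the coefficients well defined in the coincident limits $a_i\to a$.
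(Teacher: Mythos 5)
Your overall route is the same as the paper's: induction on $m$ via (\ref{VAxmaction}), closed by a one-slot splitting identity for $R_\tau$ (the paper proves a general $k$-slot splitting, a $q$-analogue of (\ref{detsplit}), and invokes it for a single slot; you derive only the one-step case, which would suffice). However, the step you flag as the main obstacle, and then assert goes through, is exactly where the argument breaks. Your recursion obtained by grouping (\ref{Rdef}) over the last slot $\sigma(m)=l$ is correct, but in it the peeled variable $y_l$ carries $q$, i.e. the factor is $\prod_{k\neq l}\frac{qy_l-q^{-1}y_k}{y_l-y_k}$, because in every pair factor of $R_\tau$ the variable in the \emph{later} slot gets $q$. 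In (\ref{VAxmaction}), by contrast, the newly added index $j$ carries $q^{-1}$: the factor is $\prod_{k}\frac{a_kq-a_jq^{-1}}{a_k-a_j}$. These two products are $q\leftrightarrow q^{-1}$ counterparts of one another, not equal, so the two recursions do not coincide and your induction does not close. A minimal counterexample: for $n=2$ and $\tau=(0,1)$, equation (\ref{VAxmaction}) gives $x^-_{1}\on x^-_{0}\on\VV\emptyset = q^{-1}(a_1+a_2)\,\VV{\{1,2\}}$, whereas $R_{(0,1)}(a_1,a_2)=q(a_1+a_2)$.

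What the induction does prove is the statement with the tuple reversed,
\be x^-_{\tau_m}\on\cdots\on x^-_{\tau_1}\on\VV\emptyset \;=\; \sum_{\substack{B\subseteq\range 1 n\\|B|=m}} R_{(\tau_m,\dots,\tau_1)}(a_B)\,\VV B\,,\nn\ee
because the module recursion matches the peeling of the \emph{first} slot of $R$ --- the unique slot whose variable carries $q^{-1}$ against all the others --- so the exponent attached to the freshly added variable must be the first entry of the sequence defining $R$. This is precisely the paper's splitting identity specialized to $k=1$; note that the paper's own induction, read literally, has the same ordering wrinkle as yours (it peels the operator $x^-_{\tau_{m+1}}$ but cites the $k=1$ splitting, which peels $\tau_1$), so Proposition \ref{loweringprop} should be understood up to reversal of $\tau$. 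That relabelling is harmless, since $\tau$ ranges over all of $\Z^m$ and every later use of the proposition needs only that the coefficients are symmetric Laurent polynomials in the $a_B$. But your proof as written cannot be completed: the identification you propose to verify ``by careful tracking'' is false, and the repair is to match against the first-slot peeling instead (equivalently, to run the induction for $x^-_{\tau_1}\on\cdots\on x^-_{\tau_m}\on\VV\emptyset$). Your remaining points --- symmetry of $R_\tau$ by reindexing $\sigma\mapsto\pi\sigma$, and Laurent-polynomiality by cancellation of the poles along $y_i=y_j$ --- are correct and essentially the paper's argument, which multiplies by the Vandermonde and invokes skew-symmetry.
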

\begin{remark} When in addition $\tau_1\geq\dots\geq\tau_m\geq 0$, so that $\tau$ is a partition, $R_\tau$ is proportional to the  Hall-Littlewood polynomial $Q_\tau$; for the definition see \cite{MacDonald}, especially (\textsc{III}.2.14). \end{remark}
\begin{proof}
First observe that, for any $0\leq k\leq m$,
\bea  R_\tau(y_1,\dots,y_m)
&=& \sum_{\substack{B\subseteq\range 1 m\\|B|=k}} \sum_{\rho\in S_k}\sum_{\mu\in S_{m-k}}
      y^{\tau_1}_{B_{\rho 1}} \dots y^{\tau_k}_{B_{\rho k}} y^{\tau_{k+1}}_{\comp B_{\mu 1}} \dots y^{\tau_{m}}_{\comp B_{\mu m}}\\
&&\quad  \prod_{\substack{i,j\in B\\i<j}}         \frac{qy_{\rho j}-q^{-1} y_{\rho i}}{y_{\rho j}-y_{\rho i}} 
          \prod_{\substack{i,j\in \comp B\\i<j}}    \frac{qy_{\mu j}-q^{-1} y_{\mu i}}{y_{\mu j}-y_{\mu i}} 
          \prod_{\substack{i\in B\\ j \in \comp B}} \frac{qy_{j}-q^{-1} y_{i}}{y_{j}-y_{i}}\nn\\
 &=& \sum_{\substack{B\subseteq\range 1 m\\|B|=k}} 
                        R_{(\tau_{1},\dots,\tau_{k})}(y_B) R_{(\tau_{k+1},\dots,\tau_{m}})(y_{\comp B})
                                  \prod_{\substack{i\in B\\j\in \comp B}} \frac{qy_j-q^{-1}y_i}{y_j-y_i}.\nn\eea 
(This can be seen as a $q$-deformed analog of (\ref{detsplit}).) Now we will establish (\ref{loweringmodeswrtVs}) by induction on $m$. 
It is trivial for $m=0$. Suppose that it holds for some $m\geq 0$. Let  $\tau=(\tau_1,\dots,\tau_{m+1})\in \Z^{m+1}$. Then
\bea x^-_{\tau_{m+1}} \on \dots \on x^-_{\tau_1} \on \VV\emptyset 
    &=& x^-_{\tau_{m+1}} \on \sum_{\substack{B\subseteq \range 1 n\\ |B|=m}} R_{(\tau_1,\dots,\tau_m)} (a_B) \VV B \\
    &=& \sum_{\substack{B\subseteq \range 1 n\\ |B|=m}} 
                   \sum_{j\notin B} \VV {B\cup \{j\}} a_j^{\tau_{m+1}} R_{(\tau_1,\dots,\tau_m)} (a_B)   
                                      \prod_{k \in B}  \frac{a_k q - a_j q^{-1}}{a_k - a_j} \nn\\
    &=& \sum_{\substack{B\subseteq \range 1 n\\ |B|=m+1}} \VV B 
                   \sum_{j\in B} a_j^{\tau_{m+1}} R_{(\tau_1,\dots,\tau_m)} (a_{B\setminus\{j\}})
                               \prod_{\substack{k\neq j \\k \in B}}  \frac{a_k q - a_j q^{-1}}{a_k - a_j}\nn\eea  
and therefore (\ref{loweringmodeswrtVs}) holds also for $m+1$ by virtue of the observation above (with $k=1$). 

Finally, note that $R_\tau(y_1,\dots,y_m) \prod_{i<j} (y_j-y_i)=\sum_{\sigma\in S_m} \sgn(\sigma) y^{\tau_1}_{\sigma 1} \dots y^{\tau_m}_{\sigma m}\prod_{i<j} \left(qy_{\sigma j}-q^{-1} y_{\sigma i}\right)$, which is a skew-symmetric Laurent polynomial and therefore divisible by $\prod_{i<j} (y_j-y_i)$ in the ring of Laurent polynomials in $y_1,\dots,y_m$. The quotient, $R_\tau$, is thus indeed a symmetric Laurent polynomial.
\finproof\end{proof}
 
Now, on setting $a_i=a+z_i=a+\eps \alpha_i$, we can expand (\ref{loweringmodeswrtVs}) in powers of $\eps$ to find that
for all $m\geq 0$ and all $\tau=(\tau_1,\dots,\tau_m)\in \Z^m$, 
\be x^-_{\tau_m}\on \dots \on x^-_{\tau_1} \on \VV\emptyset 
   = \sum_{\substack{B\subseteq \range 1 n\\ |B|=m}} R'_{\tau} (z_B) \VV B \ee
for some symmetric power series $R'_\tau(z_1,\dots,z_m)\in \mathbb C[[z_1,\dots,z_m]]^{S_m}$. The right-hand side is trivially zero for $m>n$. For all $0\leq m \leq n$ we can regard $R'_\tau$ as an infinite sum of (say, homogeneous) elements of $\Lambda_m$. By applying proposition \ref{innerprop} term-by-term in this sum, we conclude that, for all $\tau=(\tau_1,\dots,\tau_m)\in \Z^m$, 
\be x^-_{\tau_m}\on \dots \on x^-_{\tau_1} \on \VV\emptyset 
   = \sum_{A\subseteq \range 1 n} c_{A,\tau}(z_1,\dots z_n) \ZZ A \label{catsum}\ee
for certain symmetric power series $c_{A,\tau}(z_1,\dots z_n)\in\mathbb C[[z_1,\dots,z_n]]^{S_n}$.
On the other hand, it follows from proposition \ref{Prop:Stdhighest} that, for all $\ais\in \cal B$, these vectors 
span $\V_\ais$ (as a $\mathbb C$-module).
Since we have shown that each of them is a linear combination of the $\ZZ A$, with coefficients that remain finite in the limit $\eps\to 0$, we have indeed shown that
\begin{proposition} The vectors $\{\ZZ A:A\subseteq \range 1 n\}$ of definition (\ref{Zdef}) constitute a well-defined basis of $\V_\ais$ for all $\ais\in \mathcal B$. \end{proposition}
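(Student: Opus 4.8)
The plan is to show that $\{\ZZ A : A \subseteq \range 1 n\}$ forms a basis by establishing two things: first, that these vectors span $\V_\ais$ with coefficients that remain finite as $\eps \to 0$; and second, that they are linearly independent (equivalently, that they number $2^n$ and span a space of that dimension). The spanning statement is essentially already assembled from the preceding results, so the main task is to package it correctly and then upgrade ``spanning'' to ``basis.''

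First I would invoke the highest $l$-weight property. By Proposition \ref{Prop:Stdhighest}, we have $\V_\ais = \mathrm{U}^-_q(\widehat{\mathfrak{sl}}_2) \on \VV\emptyset$, so $\V_\ais$ is spanned by vectors of the form $x^-_{\tau_m} \on \dots \on x^-_{\tau_1} \on \VV\emptyset$ ranging over all $m \geq 0$ and all $\tau \in \Z^m$. Equation (\ref{catsum}) expresses each such vector as a $\mathbb C[[z_1,\dots,z_n]]^{S_n}$-linear combination of the $\ZZ A$. Evaluating the symmetric power series $c_{A,\tau}$ at the point $(z_1,\dots,z_n) = (\eps\alpha_1,\dots,\eps\alpha_n)$ for a fixed $\ais \in \mathcal B$ gives honest complex numbers, and crucially these coefficients are finite in the limit $\eps \to 0$ precisely because the $c_{A,\tau}$ are power series (not Laurent series) in the $z_i$. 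This is the content that resolves the original difficulty: the singular $\eps^{-1}$ behaviour visible in the $\VV{}$-basis matrix elements has been absorbed, so the $\ZZ A$ span $\V_\ais$ over $\mathbb C$ for every $\ais \in \mathcal B$, including the coincident limit.

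Next I would promote spanning to a basis via a dimension count. For generic $\ais$ with pairwise distinct entries, Proposition \ref{Prop:VBasis} gives that $(\VV B)_{B \subseteq \range 1 n}$ is a basis, so $\dim \V_\ais = 2^n$; and since $\V_\ais$ is well-defined and of constant dimension throughout the neighbourhood $\mathcal B$, we have $\dim \V_\ais = 2^n$ for all $\ais \in \mathcal B$. The set $\{\ZZ A : A \subseteq \range 1 n\}$ has exactly $2^n$ elements, so a spanning set of this cardinality in a $2^n$-dimensional space is automatically a basis. Alternatively, one may argue invertibility of the change-of-basis matrix directly: for $\eps \neq 0$ (distinct $z_i$) the matrix with entries $s_{\lambdaof A}(z_B)$ is invertible by Proposition \ref{VtoZ}, which exhibits its inverse explicitly, and this already shows $(\ZZ A)$ is a basis away from the limit; the work in (\ref{catsum}) is what extends this conclusion to the limit point itself.

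The main obstacle is conceptual rather than computational, and it has in fact been dispatched by Proposition \ref{innerprop}: the subtlety is that the naive coefficients expressing $\ZZ A$-combinations via Proposition \ref{VtoZ} carry a denominator $\prod_{i \in B, j \notin B}(z_i - z_j)$ which blows up in the coincident limit. The key insight is that when one re-expands a \emph{symmetric} combination $\sum_B p(z_B)\VV B$ in the $\ZZ A$-basis, these poles cancel and the resulting coefficients lie in $\mathbb C[z_1,\dots,z_n]^{S_n}$, hence are regular. Since the lowering-operator coefficients $R'_\tau$ are symmetric (Proposition \ref{loweringprop}), applying Proposition \ref{innerprop} term by term is exactly what guarantees the finiteness in (\ref{catsum}). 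Thus the entire proof reduces to citing Proposition \ref{Prop:Stdhighest} for spanning, (\ref{catsum}) for regularity of the coefficients, and the constancy of $\dim \V_\ais = 2^n$ on $\mathcal B$ to conclude that the $2^n$ regular spanning vectors $\ZZ A$ form a basis.
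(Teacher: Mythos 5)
Your proposal is correct and follows essentially the same route as the paper: Proposition \ref{Prop:Stdhighest} gives a spanning set of lowering-operator vectors, equation (\ref{catsum}) (built from Propositions \ref{loweringprop} and \ref{innerprop}) shows these are $\ZZ A$-combinations with coefficients regular as $\eps\to 0$, hence the $\ZZ A$ span for all $\ais\in\mathcal B$. Your only addition is to make explicit the final dimension count ($2^n$ spanning vectors in a space of dimension $2^n$), a step the paper leaves implicit.
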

Clearly, in the coincident limit only those $A\subseteq \range 1 n$ in the sum (\ref{catsum}) for which $c_{A,\tau}$ is proportional to the identity polynomial survive. Note that one could in principle also construct a well-defined basis from vectors of the form $x^-_{\tau_m}\on \dots \on x^-_{\tau_1} \on \VV\emptyset $ directly. However, the $\ZZ{A}$ basis as defined above also has the important property that $\phi^\pm(u)$ acts upper-triangularly, in a sense to which we now turn.

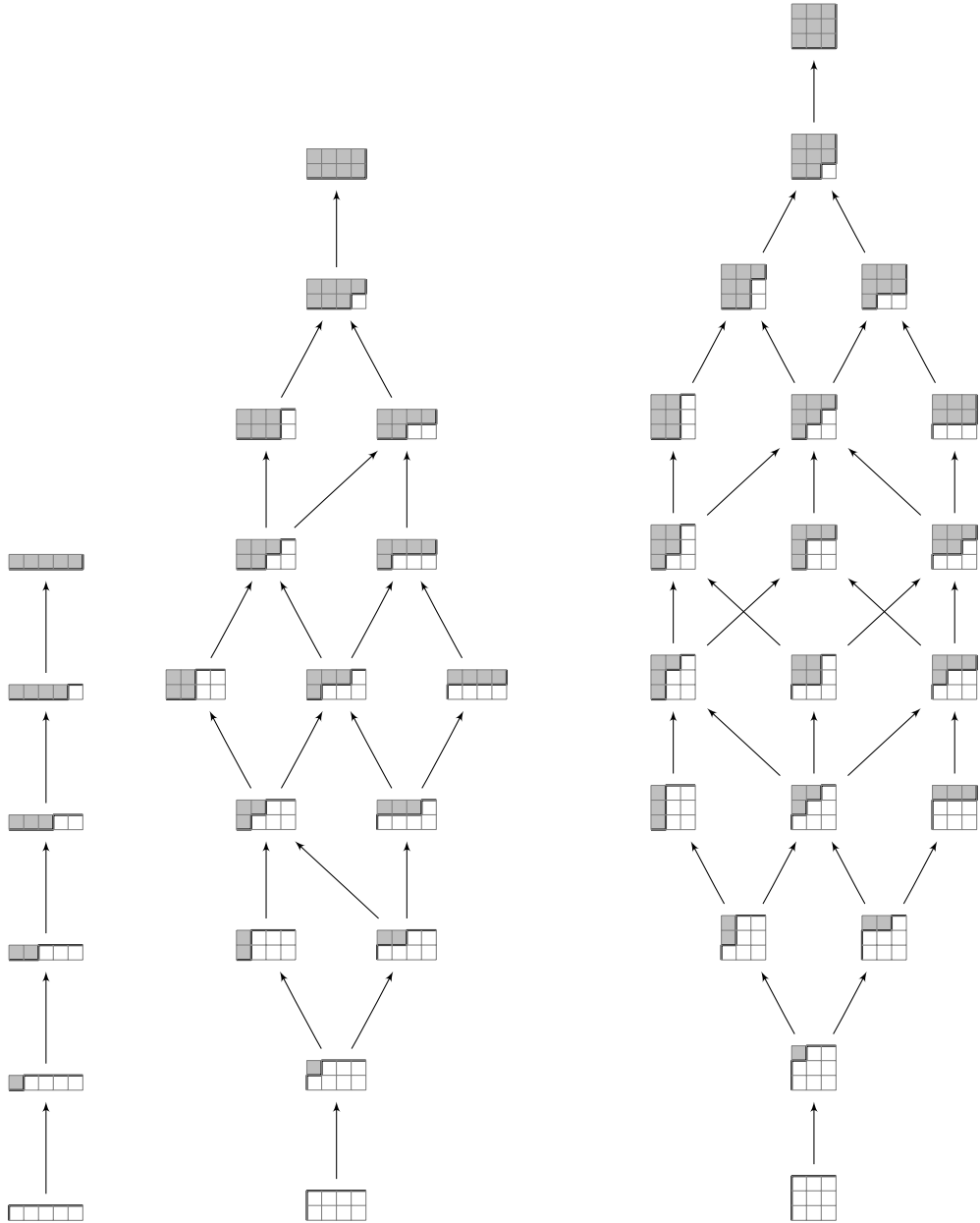
\begin{figure}
\be\nn
\begin{tikzpicture}[>=latex',line join=bevel,rotate=180]
\node (A1) at (18bp,207bp) [draw,draw=none] {$\begin{tikzpicture}[scale=.2] \path[fill=gray!50] (0,0)  -- ++(1,0)-- ++(0,1) -- ++(1,0) -- ++(1,0) -- ++(1,0) -- ++(1,0) -- (0,1) -- cycle ;  \draw[thick] (0,0)  -- ++(1,0)-- ++(0,1) -- ++(1,0) -- ++(1,0) -- ++(1,0) -- ++(1,0); \draw[gray] (0,0) grid (5,1); \end{tikzpicture}$};
  \node (A0) at (18bp,257bp) [draw,draw=none] {$\begin{tikzpicture}[scale=.2] \path[fill=gray!50] (0,0) -- ++(0,1) -- ++(1,0) -- ++(1,0) -- ++(1,0) -- ++(1,0) -- ++(1,0) -- (0,1) -- cycle ;  \draw[thick] (0,0) -- ++(0,1) -- ++(1,0) -- ++(1,0) -- ++(1,0) -- ++(1,0) -- ++(1,0); \draw[gray] (0,0) grid (5,1); \end{tikzpicture}$};
  \node (A3) at (18bp,107bp) [draw,draw=none] {$\begin{tikzpicture}[scale=.2] \path[fill=gray!50] (0,0)  -- ++(1,0) -- ++(1,0) -- ++(1,0)-- ++(0,1) -- ++(1,0) -- ++(1,0) -- (0,1) -- cycle ;  \draw[thick] (0,0)  -- ++(1,0) -- ++(1,0) -- ++(1,0)-- ++(0,1) -- ++(1,0) -- ++(1,0); \draw[gray] (0,0) grid (5,1); \end{tikzpicture}$};
  \node (A2) at (18bp,157bp) [draw,draw=none] {$\begin{tikzpicture}[scale=.2] \path[fill=gray!50] (0,0)  -- ++(1,0) -- ++(1,0)-- ++(0,1) -- ++(1,0) -- ++(1,0) -- ++(1,0) -- (0,1) -- cycle ;  \draw[thick] (0,0)  -- ++(1,0) -- ++(1,0)-- ++(0,1) -- ++(1,0) -- ++(1,0) -- ++(1,0); \draw[gray] (0,0) grid (5,1); \end{tikzpicture}$};
  \node (A5) at (18bp,7bp) [draw,draw=none] {$\begin{tikzpicture}[scale=.2] \path[fill=gray!50] (0,0)  -- ++(1,0) -- ++(1,0) -- ++(1,0) -- ++(1,0) -- ++(1,0)-- ++(0,1) -- (0,1) -- cycle ;  \draw[thick] (0,0)  -- ++(1,0) -- ++(1,0) -- ++(1,0) -- ++(1,0) -- ++(1,0)-- ++(0,1); \draw[gray] (0,0) grid (5,1); \end{tikzpicture}$};
  \node (A4) at (18bp,57bp) [draw,draw=none] {$\begin{tikzpicture}[scale=.2] \path[fill=gray!50] (0,0)  -- ++(1,0) -- ++(1,0) -- ++(1,0) -- ++(1,0)-- ++(0,1) -- ++(1,0) -- (0,1) -- cycle ;  \draw[thick] (0,0)  -- ++(1,0) -- ++(1,0) -- ++(1,0) -- ++(1,0)-- ++(0,1) -- ++(1,0); \draw[gray] (0,0) grid (5,1); \end{tikzpicture}$};
  \draw [->] (A0) ..controls (18bp,243bp) and (18bp,233bp)  .. (A1);
  \draw [->] (A2) ..controls (18bp,143bp) and (18bp,133bp)  .. (A3);
  \draw [->] (A1) ..controls (18bp,193bp) and (18bp,183bp)  .. (A2);
  \draw [->] (A3) ..controls (18bp,93bp) and (18bp,83bp)  .. (A4);
  \draw [->] (A4) ..controls (18bp,43bp) and (18bp,33bp)  .. (A5);
\end{tikzpicture}
\quad\quad
\begin{tikzpicture}[>=latex',line join=bevel,rotate=180]
\node (A15) at (45bp,157bp) [draw,draw=none] {$\begin{tikzpicture}[scale=.2] \path[fill=gray!50] (0,0)  -- ++(1,0)-- ++(0,1) -- ++(1,0) -- ++(1,0) -- ++(1,0)-- ++(0,1) -- (0,2) -- cycle ;  \draw[thick] (0,0)  -- ++(1,0)-- ++(0,1) -- ++(1,0) -- ++(1,0) -- ++(1,0)-- ++(0,1); \draw[gray] (0,0) grid (4,2); \end{tikzpicture}$};
  \node (A14) at (72bp,207bp) [draw,draw=none] {$\begin{tikzpicture}[scale=.2] \path[fill=gray!50] (0,0)  -- ++(1,0)-- ++(0,1) -- ++(1,0) -- ++(1,0)-- ++(0,1) -- ++(1,0) -- (0,2) -- cycle ;  \draw[thick] (0,0)  -- ++(1,0)-- ++(0,1) -- ++(1,0) -- ++(1,0)-- ++(0,1) -- ++(1,0); \draw[gray] (0,0) grid (4,2); \end{tikzpicture}$};
  \node (A23) at (126bp,207bp) [draw,draw=none] {$\begin{tikzpicture}[scale=.2] \path[fill=gray!50] (0,0)  -- ++(1,0) -- ++(1,0)-- ++(0,1)-- ++(0,1) -- ++(1,0) -- ++(1,0) -- (0,2) -- cycle ;  \draw[thick] (0,0)  -- ++(1,0) -- ++(1,0)-- ++(0,1)-- ++(0,1) -- ++(1,0) -- ++(1,0); \draw[gray] (0,0) grid (4,2); \end{tikzpicture}$};
  \node (A24) at (99bp,157bp) [draw,draw=none] {$\begin{tikzpicture}[scale=.2] \path[fill=gray!50] (0,0)  -- ++(1,0) -- ++(1,0)-- ++(0,1) -- ++(1,0)-- ++(0,1) -- ++(1,0) -- (0,2) -- cycle ;  \draw[thick] (0,0)  -- ++(1,0) -- ++(1,0)-- ++(0,1) -- ++(1,0)-- ++(0,1) -- ++(1,0); \draw[gray] (0,0) grid (4,2); \end{tikzpicture}$};
  \node (A25) at (45bp,107bp) [draw,draw=none] {$\begin{tikzpicture}[scale=.2] \path[fill=gray!50] (0,0)  -- ++(1,0) -- ++(1,0)-- ++(0,1) -- ++(1,0) -- ++(1,0)-- ++(0,1) -- (0,2) -- cycle ;  \draw[thick] (0,0)  -- ++(1,0) -- ++(1,0)-- ++(0,1) -- ++(1,0) -- ++(1,0)-- ++(0,1); \draw[gray] (0,0) grid (4,2); \end{tikzpicture}$};
  \node (A13) at (99bp,257bp) [draw,draw=none] {$\begin{tikzpicture}[scale=.2] \path[fill=gray!50] (0,0)  -- ++(1,0)-- ++(0,1) -- ++(1,0)-- ++(0,1) -- ++(1,0) -- ++(1,0) -- (0,2) -- cycle ;  \draw[thick] (0,0)  -- ++(1,0)-- ++(0,1) -- ++(1,0)-- ++(0,1) -- ++(1,0) -- ++(1,0); \draw[gray] (0,0) grid (4,2); \end{tikzpicture}$};
  \node (A12) at (99bp,307bp) [draw,draw=none] {$\begin{tikzpicture}[scale=.2] \path[fill=gray!50] (0,0)  -- ++(1,0)-- ++(0,1)-- ++(0,1) -- ++(1,0) -- ++(1,0) -- ++(1,0) -- (0,2) -- cycle ;  \draw[thick] (0,0)  -- ++(1,0)-- ++(0,1)-- ++(0,1) -- ++(1,0) -- ++(1,0) -- ++(1,0); \draw[gray] (0,0) grid (4,2); \end{tikzpicture}$};
  \node (A02) at (72bp,357bp) [draw,draw=none] {$\begin{tikzpicture}[scale=.2] \path[fill=gray!50] (0,0) -- ++(0,1) -- ++(1,0)-- ++(0,1) -- ++(1,0) -- ++(1,0) -- ++(1,0) -- (0,2) -- cycle ;  \draw[thick] (0,0) -- ++(0,1) -- ++(1,0)-- ++(0,1) -- ++(1,0) -- ++(1,0) -- ++(1,0); \draw[gray] (0,0) grid (4,2); \end{tikzpicture}$};
  \node (A03) at (45bp,307bp) [draw,draw=none] {$\begin{tikzpicture}[scale=.2] \path[fill=gray!50] (0,0) -- ++(0,1) -- ++(1,0) -- ++(1,0)-- ++(0,1) -- ++(1,0) -- ++(1,0) -- (0,2) -- cycle ;  \draw[thick] (0,0) -- ++(0,1) -- ++(1,0) -- ++(1,0)-- ++(0,1) -- ++(1,0) -- ++(1,0); \draw[gray] (0,0) grid (4,2); \end{tikzpicture}$};
  \node (A01) at (72bp,407bp) [draw,draw=none] {$\begin{tikzpicture}[scale=.2] \path[fill=gray!50] (0,0) -- ++(0,1)-- ++(0,1) -- ++(1,0) -- ++(1,0) -- ++(1,0) -- ++(1,0) -- (0,2) -- cycle ;  \draw[thick] (0,0) -- ++(0,1)-- ++(0,1) -- ++(1,0) -- ++(1,0) -- ++(1,0) -- ++(1,0); \draw[gray] (0,0) grid (4,2); \end{tikzpicture}$};
  \node (A04) at (45bp,257bp) [draw,draw=none] {$\begin{tikzpicture}[scale=.2] \path[fill=gray!50] (0,0) -- ++(0,1) -- ++(1,0) -- ++(1,0) -- ++(1,0)-- ++(0,1) -- ++(1,0) -- (0,2) -- cycle ;  \draw[thick] (0,0) -- ++(0,1) -- ++(1,0) -- ++(1,0) -- ++(1,0)-- ++(0,1) -- ++(1,0); \draw[gray] (0,0) grid (4,2); \end{tikzpicture}$};
  \node (A05) at (18bp,207bp) [draw,draw=none] {$\begin{tikzpicture}[scale=.2] \path[fill=gray!50] (0,0) -- ++(0,1) -- ++(1,0) -- ++(1,0) -- ++(1,0) -- ++(1,0)-- ++(0,1) -- (0,2) -- cycle ;  \draw[thick] (0,0) -- ++(0,1) -- ++(1,0) -- ++(1,0) -- ++(1,0) -- ++(1,0)-- ++(0,1); \draw[gray] (0,0) grid (4,2); \end{tikzpicture}$};
  \node (A34) at (99bp,107bp) [draw,draw=none] {$\begin{tikzpicture}[scale=.2] \path[fill=gray!50] (0,0)  -- ++(1,0) -- ++(1,0) -- ++(1,0)-- ++(0,1)-- ++(0,1) -- ++(1,0) -- (0,2) -- cycle ;  \draw[thick] (0,0)  -- ++(1,0) -- ++(1,0) -- ++(1,0)-- ++(0,1)-- ++(0,1) -- ++(1,0); \draw[gray] (0,0) grid (4,2); \end{tikzpicture}$};
  \node (A35) at (72bp,57bp) [draw,draw=none] {$\begin{tikzpicture}[scale=.2] \path[fill=gray!50] (0,0)  -- ++(1,0) -- ++(1,0) -- ++(1,0)-- ++(0,1) -- ++(1,0)-- ++(0,1) -- (0,2) -- cycle ;  \draw[thick] (0,0)  -- ++(1,0) -- ++(1,0) -- ++(1,0)-- ++(0,1) -- ++(1,0)-- ++(0,1); \draw[gray] (0,0) grid (4,2); \end{tikzpicture}$};
  \node (A45) at (72bp,7bp) [draw,draw=none] {$\begin{tikzpicture}[scale=.2] \path[fill=gray!50] (0,0)  -- ++(1,0) -- ++(1,0) -- ++(1,0) -- ++(1,0)-- ++(0,1)-- ++(0,1) -- (0,2) -- cycle ;  \draw[thick] (0,0)  -- ++(1,0) -- ++(1,0) -- ++(1,0) -- ++(1,0)-- ++(0,1)-- ++(0,1); \draw[gray] (0,0) grid (4,2); \end{tikzpicture}$};
  \draw [->] (A35) ..controls (72bp,43bp) and (72bp,33bp)  .. (A45);
  \draw [->] (A13) ..controls (107bp,243bp) and (113bp,232bp)  .. (A23);
  \draw [->] (A24) ..controls (99bp,143bp) and (99bp,133bp)  .. (A34);
  \draw [->] (A03) ..controls (45bp,293bp) and (45bp,283bp)  .. (A04);
  \draw [->] (A34) ..controls (91bp,93bp) and (85bp,82bp)  .. (A35);
  \draw [->] (A01) ..controls (72bp,393bp) and (72bp,383bp)  .. (A02);
  \draw [->] (A23) ..controls (118bp,193bp) and (112bp,182bp)  .. (A24);
  \draw [->] (A12) ..controls (99bp,293bp) and (99bp,283bp)  .. (A13);
  \draw [->] (A05) ..controls (26bp,193bp) and (32bp,182bp)  .. (A15);
  \draw [->] (A14) ..controls (80bp,193bp) and (86bp,182bp)  .. (A24);
  \draw [->] (A02) ..controls (64bp,343bp) and (58bp,332bp)  .. (A03);
  \draw [->] (A24) ..controls (83bp,142bp) and (70bp,130bp)  .. (A25);
  \draw [->] (A04) ..controls (53bp,243bp) and (59bp,232bp)  .. (A14);
  \draw [->] (A25) ..controls (53bp,93bp) and (59bp,82bp)  .. (A35);
  \draw [->] (A04) ..controls (37bp,243bp) and (31bp,232bp)  .. (A05);
  \draw [->] (A02) ..controls (80bp,343bp) and (86bp,332bp)  .. (A12);
  \draw [->] (A15) ..controls (45bp,143bp) and (45bp,133bp)  .. (A25);
  \draw [->] (A14) ..controls (64bp,193bp) and (58bp,182bp)  .. (A15);
  \draw [->] (A03) ..controls (61bp,292bp) and (74bp,280bp)  .. (A13);
  \draw [->] (A13) ..controls (91bp,243bp) and (85bp,232bp)  .. (A14);
\end{tikzpicture}
\qquad\qquad
\begin{tikzpicture}[>=latex',line join=bevel,rotate=180]
\node (A234) at (126bp,157bp) [draw,draw=none] {$\begin{tikzpicture}[scale=.2] \path[fill=gray!50] (0,0)  -- ++(1,0) -- ++(1,0)-- ++(0,1)-- ++(0,1)-- ++(0,1) -- ++(1,0) -- (0,3) -- cycle ;  \draw[thick] (0,0)  -- ++(1,0) -- ++(1,0)-- ++(0,1)-- ++(0,1)-- ++(0,1) -- ++(1,0); \draw[gray] (0,0) grid (3,3); \end{tikzpicture}$};
  \node (A123) at (126bp,307bp) [draw,draw=none] {$\begin{tikzpicture}[scale=.2] \path[fill=gray!50] (0,0)  -- ++(1,0)-- ++(0,1)-- ++(0,1)-- ++(0,1) -- ++(1,0) -- ++(1,0) -- (0,3) -- cycle ;  \draw[thick] (0,0)  -- ++(1,0)-- ++(0,1)-- ++(0,1)-- ++(0,1) -- ++(1,0) -- ++(1,0); \draw[gray] (0,0) grid (3,3); \end{tikzpicture}$};
  \node (A245) at (72bp,57bp) [draw,draw=none] {$\begin{tikzpicture}[scale=.2] \path[fill=gray!50] (0,0)  -- ++(1,0) -- ++(1,0)-- ++(0,1) -- ++(1,0)-- ++(0,1)-- ++(0,1) -- (0,3) -- cycle ;  \draw[thick] (0,0)  -- ++(1,0) -- ++(1,0)-- ++(0,1) -- ++(1,0)-- ++(0,1)-- ++(0,1); \draw[gray] (0,0) grid (3,3); \end{tikzpicture}$};
  \node (A045) at (18bp,157bp) [draw,draw=none] {$\begin{tikzpicture}[scale=.2] \path[fill=gray!50] (0,0) -- ++(0,1) -- ++(1,0) -- ++(1,0) -- ++(1,0)-- ++(0,1)-- ++(0,1) -- (0,3) -- cycle ;  \draw[thick] (0,0) -- ++(0,1) -- ++(1,0) -- ++(1,0) -- ++(1,0)-- ++(0,1)-- ++(0,1); \draw[gray] (0,0) grid (3,3); \end{tikzpicture}$};
  \node (A235) at (99bp,107bp) [draw,draw=none] {$\begin{tikzpicture}[scale=.2] \path[fill=gray!50] (0,0)  -- ++(1,0) -- ++(1,0)-- ++(0,1)-- ++(0,1) -- ++(1,0)-- ++(0,1) -- (0,3) -- cycle ;  \draw[thick] (0,0)  -- ++(1,0) -- ++(1,0)-- ++(0,1)-- ++(0,1) -- ++(1,0)-- ++(0,1); \draw[gray] (0,0) grid (3,3); \end{tikzpicture}$};
  \node (A014) at (45bp,357bp) [draw,draw=none] {$\begin{tikzpicture}[scale=.2] \path[fill=gray!50] (0,0) -- ++(0,1)-- ++(0,1) -- ++(1,0) -- ++(1,0)-- ++(0,1) -- ++(1,0) -- (0,3) -- cycle ;  \draw[thick] (0,0) -- ++(0,1)-- ++(0,1) -- ++(1,0) -- ++(1,0)-- ++(0,1) -- ++(1,0); \draw[gray] (0,0) grid (3,3); \end{tikzpicture}$};
  \node (A015) at (18bp,307bp) [draw,draw=none] {$\begin{tikzpicture}[scale=.2] \path[fill=gray!50] (0,0) -- ++(0,1)-- ++(0,1) -- ++(1,0) -- ++(1,0) -- ++(1,0)-- ++(0,1) -- (0,3) -- cycle ;  \draw[thick] (0,0) -- ++(0,1)-- ++(0,1) -- ++(1,0) -- ++(1,0) -- ++(1,0)-- ++(0,1); \draw[gray] (0,0) grid (3,3); \end{tikzpicture}$};
  \node (A012) at (72bp,457bp) [draw,draw=none] {$\begin{tikzpicture}[scale=.2] \path[fill=gray!50] (0,0) -- ++(0,1)-- ++(0,1)-- ++(0,1) -- ++(1,0) -- ++(1,0) -- ++(1,0) -- (0,3) -- cycle ;  \draw[thick] (0,0) -- ++(0,1)-- ++(0,1)-- ++(0,1) -- ++(1,0) -- ++(1,0) -- ++(1,0); \draw[gray] (0,0) grid (3,3); \end{tikzpicture}$};
  \node (A013) at (72bp,407bp) [draw,draw=none] {$\begin{tikzpicture}[scale=.2] \path[fill=gray!50] (0,0) -- ++(0,1)-- ++(0,1) -- ++(1,0)-- ++(0,1) -- ++(1,0) -- ++(1,0) -- (0,3) -- cycle ;  \draw[thick] (0,0) -- ++(0,1)-- ++(0,1) -- ++(1,0)-- ++(0,1) -- ++(1,0) -- ++(1,0); \draw[gray] (0,0) grid (3,3); \end{tikzpicture}$};
  \node (A025) at (18bp,257bp) [draw,draw=none] {$\begin{tikzpicture}[scale=.2] \path[fill=gray!50] (0,0) -- ++(0,1) -- ++(1,0)-- ++(0,1) -- ++(1,0) -- ++(1,0)-- ++(0,1) -- (0,3) -- cycle ;  \draw[thick] (0,0) -- ++(0,1) -- ++(1,0)-- ++(0,1) -- ++(1,0) -- ++(1,0)-- ++(0,1); \draw[gray] (0,0) grid (3,3); \end{tikzpicture}$};
  \node (A024) at (72bp,307bp) [draw,draw=none] {$\begin{tikzpicture}[scale=.2] \path[fill=gray!50] (0,0) -- ++(0,1) -- ++(1,0)-- ++(0,1) -- ++(1,0)-- ++(0,1) -- ++(1,0) -- (0,3) -- cycle ;  \draw[thick] (0,0) -- ++(0,1) -- ++(1,0)-- ++(0,1) -- ++(1,0)-- ++(0,1) -- ++(1,0); \draw[gray] (0,0) grid (3,3); \end{tikzpicture}$};
  \node (A034) at (72bp,257bp) [draw,draw=none] {$\begin{tikzpicture}[scale=.2] \path[fill=gray!50] (0,0) -- ++(0,1) -- ++(1,0) -- ++(1,0)-- ++(0,1)-- ++(0,1) -- ++(1,0) -- (0,3) -- cycle ;  \draw[thick] (0,0) -- ++(0,1) -- ++(1,0) -- ++(1,0)-- ++(0,1)-- ++(0,1) -- ++(1,0); \draw[gray] (0,0) grid (3,3); \end{tikzpicture}$};
  \node (A023) at (99bp,357bp) [draw,draw=none] {$\begin{tikzpicture}[scale=.2] \path[fill=gray!50] (0,0) -- ++(0,1) -- ++(1,0)-- ++(0,1)-- ++(0,1) -- ++(1,0) -- ++(1,0) -- (0,3) -- cycle ;  \draw[thick] (0,0) -- ++(0,1) -- ++(1,0)-- ++(0,1)-- ++(0,1) -- ++(1,0) -- ++(1,0); \draw[gray] (0,0) grid (3,3); \end{tikzpicture}$};
  \node (A145) at (45bp,107bp) [draw,draw=none] {$\begin{tikzpicture}[scale=.2] \path[fill=gray!50] (0,0)  -- ++(1,0)-- ++(0,1) -- ++(1,0) -- ++(1,0)-- ++(0,1)-- ++(0,1) -- (0,3) -- cycle ;  \draw[thick] (0,0)  -- ++(1,0)-- ++(0,1) -- ++(1,0) -- ++(1,0)-- ++(0,1)-- ++(0,1); \draw[gray] (0,0) grid (3,3); \end{tikzpicture}$};
  \node (A345) at (72bp,7bp) [draw,draw=none] {$\begin{tikzpicture}[scale=.2] \path[fill=gray!50] (0,0)  -- ++(1,0) -- ++(1,0) -- ++(1,0)-- ++(0,1)-- ++(0,1)-- ++(0,1) -- (0,3) -- cycle ;  \draw[thick] (0,0)  -- ++(1,0) -- ++(1,0) -- ++(1,0)-- ++(0,1)-- ++(0,1)-- ++(0,1); \draw[gray] (0,0) grid (3,3); \end{tikzpicture}$};
  \node (A035) at (18bp,207bp) [draw,draw=none] {$\begin{tikzpicture}[scale=.2] \path[fill=gray!50] (0,0) -- ++(0,1) -- ++(1,0) -- ++(1,0)-- ++(0,1) -- ++(1,0)-- ++(0,1) -- (0,3) -- cycle ;  \draw[thick] (0,0) -- ++(0,1) -- ++(1,0) -- ++(1,0)-- ++(0,1) -- ++(1,0)-- ++(0,1); \draw[gray] (0,0) grid (3,3); \end{tikzpicture}$};
  \node (A124) at (126bp,257bp) [draw,draw=none] {$\begin{tikzpicture}[scale=.2] \path[fill=gray!50] (0,0)  -- ++(1,0)-- ++(0,1)-- ++(0,1) -- ++(1,0)-- ++(0,1) -- ++(1,0) -- (0,3) -- cycle ;  \draw[thick] (0,0)  -- ++(1,0)-- ++(0,1)-- ++(0,1) -- ++(1,0)-- ++(0,1) -- ++(1,0); \draw[gray] (0,0) grid (3,3); \end{tikzpicture}$};
  \node (A125) at (72bp,207bp) [draw,draw=none] {$\begin{tikzpicture}[scale=.2] \path[fill=gray!50] (0,0)  -- ++(1,0)-- ++(0,1)-- ++(0,1) -- ++(1,0) -- ++(1,0)-- ++(0,1) -- (0,3) -- cycle ;  \draw[thick] (0,0)  -- ++(1,0)-- ++(0,1)-- ++(0,1) -- ++(1,0) -- ++(1,0)-- ++(0,1); \draw[gray] (0,0) grid (3,3); \end{tikzpicture}$};
  \node (A135) at (72bp,157bp) [draw,draw=none] {$\begin{tikzpicture}[scale=.2] \path[fill=gray!50] (0,0)  -- ++(1,0)-- ++(0,1) -- ++(1,0)-- ++(0,1) -- ++(1,0)-- ++(0,1) -- (0,3) -- cycle ;  \draw[thick] (0,0)  -- ++(1,0)-- ++(0,1) -- ++(1,0)-- ++(0,1) -- ++(1,0)-- ++(0,1); \draw[gray] (0,0) grid (3,3); \end{tikzpicture}$};
  \node (A134) at (126bp,207bp) [draw,draw=none] {$\begin{tikzpicture}[scale=.2] \path[fill=gray!50] (0,0)  -- ++(1,0)-- ++(0,1) -- ++(1,0)-- ++(0,1)-- ++(0,1) -- ++(1,0) -- (0,3) -- cycle ;  \draw[thick] (0,0)  -- ++(1,0)-- ++(0,1) -- ++(1,0)-- ++(0,1)-- ++(0,1) -- ++(1,0); \draw[gray] (0,0) grid (3,3); \end{tikzpicture}$};
  \draw [->] (A145) ..controls (53bp,93bp) and (59bp,82bp)  .. (A245);
  \draw [->] (A124) ..controls (126bp,243bp) and (126bp,233bp)  .. (A134);
  \draw [->] (A024) ..controls (88bp,292bp) and (101bp,280bp)  .. (A124);
  \draw [->] (A023) ..controls (107bp,343bp) and (113bp,332bp)  .. (A123);
  \draw [->] (A135) ..controls (64bp,143bp) and (58bp,132bp)  .. (A145);
  \draw [->] (A025) ..controls (34bp,242bp) and (47bp,230bp)  .. (A125);
  \draw [->] (A014) ..controls (37bp,343bp) and (31bp,332bp)  .. (A015);
  \draw [->] (A013) ..controls (64bp,393bp) and (58bp,382bp)  .. (A014);
  \draw [->] (A134) ..controls (110bp,192bp) and (97bp,180bp)  .. (A135);
  \draw [->] (A235) ..controls (91bp,93bp) and (85bp,82bp)  .. (A245);
  \draw [->] (A234) ..controls (118bp,143bp) and (112bp,132bp)  .. (A235);
  \draw [->] (A013) ..controls (80bp,393bp) and (86bp,382bp)  .. (A023);
  \draw [->] (A034) ..controls (88bp,242bp) and (101bp,230bp)  .. (A134);
  \draw [->] (A035) ..controls (34bp,192bp) and (47bp,180bp)  .. (A135);
  \draw [->] (A135) ..controls (80bp,143bp) and (86bp,132bp)  .. (A235);
  \draw [->] (A125) ..controls (72bp,193bp) and (72bp,183bp)  .. (A135);
  \draw [->] (A123) ..controls (126bp,293bp) and (126bp,283bp)  .. (A124);
  \draw [->] (A124) ..controls (110bp,242bp) and (97bp,230bp)  .. (A125);
  \draw [->] (A023) ..controls (91bp,343bp) and (85bp,332bp)  .. (A024);
  \draw [->] (A035) ..controls (18bp,193bp) and (18bp,183bp)  .. (A045);
  \draw [->] (A034) ..controls (56bp,242bp) and (43bp,230bp)  .. (A035);
  \draw [->] (A025) ..controls (18bp,243bp) and (18bp,233bp)  .. (A035);
  \draw [->] (A024) ..controls (56bp,292bp) and (43bp,280bp)  .. (A025);
  \draw [->] (A024) ..controls (72bp,293bp) and (72bp,283bp)  .. (A034);
  \draw [->] (A245) ..controls (72bp,43bp) and (72bp,33bp)  .. (A345);
  \draw [->] (A045) ..controls (26bp,143bp) and (32bp,132bp)  .. (A145);
  \draw [->] (A134) ..controls (126bp,193bp) and (126bp,183bp)  .. (A234);
  \draw [->] (A015) ..controls (18bp,293bp) and (18bp,283bp)  .. (A025);
  \draw [->] (A014) ..controls (53bp,343bp) and (59bp,332bp)  .. (A024);
  \draw [->] (A012) ..controls (72bp,443bp) and (72bp,433bp)  .. (A013);
\end{tikzpicture}
\ee
\caption{\label{graphfig}From left to right, the Hasse diagrams of the posets $L(1,5)$, $L(2,4)$ and $L(3,3)$.}
\end{figure}

\subsection{Action of $\phi^\pm(u)$}
The next task is to unravel the Jordan block structure of the Cartan generators $\phi^\pm(u)$ in the basis $\ZZ A$.
It is helpful to factor $\phi^\pm(u)$ (understood here, by a slight abuse of notation, purely as a map $\V_\ais\to \V_\ais$) as follows
\be \phi^\pm(u) = \phiin (u) \circ \phiout (u) \circ \phimid(u),\label{factoredphi}\ee
with
\bea \phimid(u)\on \VV B 
&:=& \VV B \frac{(q^{-1} - q a u)^{|B|} (q - q^{-1}au)^{|\comp B|}}{(1-a u)^n} H_{\left[\frac{u}{1-au}\right]}(z_1,\dots,z_n) \nn\\
  \phiin(u)\on \VV B &:=& \VV B\prod_{j\in B}  \frac{q^{- 1} - q a_j u}{q^{-1}-qau} 
                    = \VV B E_{\left[\frac{ - qu}{q^{-1} - qau}\right]}(z_B)\\
\phiout(u)\on \VV B &:=& \VV B\prod_{j\notin B} \frac{q - q^{-1} a_j u}{q-q^{-1}au} 
                =   \VV B E_{\left[\frac{ - q^{-1} u}{q - q^{-1}au}\right]}(z_{\comp B})\eea
where 
\be H_{[t]}(y_1,\dots,y_m) = \prod_{i=1}^m \frac{1}{1-ty_i} = \sum_{r\geq 0} t^r h_r(y_1,\dots,y_m)\ee
is the generating function of the complete symmetric functions $h_r=s_{(r)}$ and
\be E_{[t]}(y_1,\dots,y_m) = \prod_{i=1}^m (1+ty_i) = \sum_{r\geq 0} t^r e_r(y_1,\dots,y_m)\ee
is the generating function of the elementary symmetric functions $e_r = s_{(1^r)}$. 
It is clear that in the coincident limit only the (diagonal) leading term of $\phimid(u)$ survives; the rest, proportional to complete symmetric polynomials in $z_1,\dots,z_n$ of strictly positive degrees, go to zero. It is rather $\phiin$ and $\phiout$ that are responsible for the formation of the Jordan blocks, as follows. 

For all $A\subseteq \range 1 n$, let $\mathsf e(A,r)$ (resp. $\mathsf h(A,r)$) denote the set of all subsets $C \subseteq \range 1 n$ with $|A|$ elements such that the Ferrers diagram of $\lambdaof C$ is obtained from that of $\lambdaof A$ by adding $r$ boxes, no two in any one row (resp. column).
\begin{proposition} \label{phiaction}In the coincident limit,
\be
\phiin(u) \on \ZZ A =  
                       \sum_{r\geq 0} \left(\tfrac{ - qu}{q^{-1} - qau}\right)^r 
                        \sum_{C \in \mathsf e(A,r)} \ZZ C \, ;
\qquad \phiout(u) \on \ZZ A =  
                \sum_{r\geq 0} \left(\tfrac{ q^{-1}u}{q - q^{-1}au}\right)^r 
                        \sum_{C \in \mathsf h(A,r)} \ZZ C \, ;\nonumber \ee
and $\left[\phiin(u),\phiout(u)\right] \on\ZZ A= 0$.\end{proposition}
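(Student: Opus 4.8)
The plan is to compute $\phiin(u)\on\ZZ A$ and $\phiout(u)\on\ZZ A$ directly from definition \ref{Zdef} and the stated actions of $\phiin(u),\phiout(u)$ on the $\VV B$, re-expanding in the $\ZZ C$ basis by exactly the determinantal manipulations already used for Propositions \ref{VtoZ} and \ref{innerprop}, and then to extract the $\eps\to0$ limit. For $\phiin(u)$ one has, since $\phiin(u)\on\VV B=\VV B\,E_{[t]}(z_B)$ with $t=-qu/(q^{-1}-qau)$,
\be\nn\phiin(u)\on\ZZ A=\sum_{r\geq0}t^r\sum_{\substack{B\subseteq\range1n\\|B|=|A|}}s_{\lambdaof A}(z_B)\,e_r(z_B)\,\VV B.\ee
Pieri's rule gives $s_{\lambdaof A}e_r=\sum_\mu s_\mu$, summed over $\mu$ obtained from $\lambdaof A$ by a vertical $r$-strip (no two new boxes in one row). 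For each $\mu$, the computation (\ref{smuB})--(\ref{leftin}) shows $\sum_{|B|=|A|}s_\mu(z_B)\VV B=\sum_C\pm s_\rho(z_1,\dots,z_n)\ZZ C$, where $\rho$ is the empty partition exactly when $\mu=\lambdaof C$ (forcing the sign to be $+1$) and has positive size otherwise. Since every $s_\rho$ with $\rho\neq\emptyset$ vanishes at $z=0$, only the terms with $\mu=\lambdaof C$ survive the coincident limit; as $\lambdaof C$ is precisely a vertical $r$-strip extension of $\lambdaof A$ fitting in the $|A|\times(n-|A|)$ box, these are indexed by $C\in\mathsf e(A,r)$, giving the first formula.

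The treatment of $\phiout(u)$ is parallel but needs one further ingredient. From $\phiout(u)\on\VV B=\VV B\,E_{[t']}(z_{\comp B})$ with $t'=-q^{-1}u/(q-q^{-1}au)$, I would insert Proposition \ref{VtoZ} to rewrite each $\VV B$ in the $\ZZ C$ basis and apply Pieri to $e_r(z_{\comp B})s_{\lambdaof{\comp C}}(z_{\comp B})=\sum_\nu s_\nu(z_{\comp B})$ (vertical $r$-strips on $\lambdaof{\comp C}$). This leaves sums of the form $\sum_{|B|=|A|}s_{\lambdaof A}(z_B)s_\nu(z_{\comp B})/\prod_{i\in B,j\notin B}(z_i-z_j)$, which are evaluated by (\ref{vdmfactored}) and (\ref{detsplit}) exactly as in (\ref{deltarel}) and (\ref{rightin}): one obtains a single ratio of the Vandermonde $\Delta$ into a determinant of powers of the $z_j$, whose coincident limit is nonzero only when the concatenated exponent sequence is a permutation of $(1,\dots,n)$. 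This forces $\nu=\lambdaof{\comp A}$, so that $\ZZ C$ contributes precisely when $\lambdaof{\comp A}$ arises from $\lambdaof{\comp C}$ by a vertical $r$-strip.

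It remains to recast this condition and to settle the sign. Since $\lambdaof{\comp A}$ is the transpose of the complement of $\lambdaof A$ inside the box (and likewise for $C$), passing a vertical $r$-strip from $\lambdaof{\comp C}$ up to $\lambdaof{\comp A}$ becomes, after transposition and complementation in the box, the statement that $\lambdaof C$ is obtained from $\lambdaof A$ by a horizontal $r$-strip, i.e. that $C\in\mathsf h(A,r)$. For the coefficient, using $\sgn(w_B)=(-1)^{|\lambdaof B|}$ from (\ref{epssigneqn}) together with $|\lambdaof C|+|\lambdaof{\comp C}|=|A|\cdot|\comp A|$ and $|\lambdaof C|=|\lambdaof A|+r$ for $C\in\mathsf h(A,r)$, the accumulated sign collapses to $(-1)^r$, which combines with $(t')^r$ to produce exactly $\big(q^{-1}u/(q-q^{-1}au)\big)^r$. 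I expect this sign accounting, interlocked with the transpose/complement duality, to be the main obstacle; by contrast the determinantal reductions are routine once (\ref{detsplit}) and (\ref{vdmfactored}) are invoked.

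For the commutator I would avoid the combinatorics entirely. For $\eps\neq0$ both $\phiin(u)$ and $\phiout(u)$ act on each $\VV B$ by a scalar, hence are simultaneously diagonal and commute as endomorphisms of $\V_\ais$. Since, by the first part of the proposition, both operators have well-defined limits as $\eps\to0$ and they commute for every $\eps\neq0$, their limits commute as well, giving $[\phiin(u),\phiout(u)]\on\ZZ A=0$.
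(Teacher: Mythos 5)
Your derivations of the two formulas follow the paper's own proof essentially step for step: expand via definition \ref{Zdef}, apply the Pieri rule for $e_r$, and discard in the coincident limit all terms whose coefficients are symmetric polynomials of strictly positive degree --- via (\ref{smuB})--(\ref{leftin}) for $\phiin(u)$, and via Proposition \ref{VtoZ} together with the orthogonality computation (\ref{deltarel})/(\ref{rightin}) for $\phiout(u)$. Your explicit sign bookkeeping through (\ref{epssigneqn}) and the transpose/complement duality $\comp A\in\mathsf e(\comp C,r)\Leftrightarrow C\in\mathsf h(A,r)$ is exactly what the paper compresses into ``or, equivalently'' and ``The result follows, on using (\ref{epssigneqn})'', and both your reduction and your sign $(-1)^r$ check out. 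Where you genuinely depart from the paper is the commutator. The paper proves $\left[\phiin(u),\phiout(u)\right]\on\ZZ A=0$ combinatorially, by exhibiting an involution (decomposing the skew shape $\lambda''-\lambda$ into border strips and reversing arrows) between intermediate partitions reached by adding first a vertical and then a horizontal strip, and those reached in the opposite order. You instead argue by continuity: for $\eps\neq 0$ the two operators are simultaneously diagonal on the $\VV B$ and hence commute, and since their matrix entries in the $\ZZ$ basis are symmetric polynomials in $z_1,\dots,z_n$ --- which is precisely what your proof of the first part establishes, the a priori dangerous denominators $\prod(z_i-z_j)$ having cancelled --- each $u$-coefficient of the commutator has entries that are continuous in $\eps$ and vanish identically for $\eps\neq 0$, hence also at $\eps=0$. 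This is valid (entrywise convergence of finite matrices suffices) and shorter. What the paper's bijection buys in exchange is explicit combinatorial control over the intermediate shapes, which it reuses in the remark immediately following the proposition to verify directly that $\phi^-_0$ is diagonal; your argument yields only the vanishing of the commutator.
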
 
\begin{proof}
Consider first the action of $\phiin(u)$ on the $\ZZ A$ basis vectors. By definition \ref{Zdef},
\bea \phiin(u) \on \ZZ A &=&  \sum_{\substack{B\subseteq \range 1 n\\ |B|=|A|}} \VV B s_{\lambdaof A}(z_B) E_{\left[\frac{ -  qu}{q^{-1} - qau}\right]}(z_B) \\
&=&   \sum_{r\geq 0} \left(\tfrac{ - qu}{q^{-1} - qau}\right)^r 
        \sum_{\substack{B\subseteq \range 1 n\\ |B|=|A|}} \VV B s_{\lambdaof A}(z_B) e_r(z_B). \nn\eea
Recall the Pieri formula $s_{\lambdaof A} e_r = \sum_{\lambda'}s_{\lambda'}$, where the sum is over every partition $\lambda'$ whose Ferrers diagram can be obtained from that of $\lambdaof A$ by adding $r$ extra boxes, no two in the same row  (see \eg \cite{MacDonald}). Since we are concerned with symmetric polynomials of a specific number, $|A|$, of variables, we can ignore those $\lambda'$ that have more than $|A|$ parts.
Moreover, if a given $\lambda'$ has any part greater than $n- |A|$ then the sequence $(\lambda'_{|A|+1-i}+i)_{\ir{|A|}}$ has an element greater than $n$, so that when $\sum_{\substack{B\subseteq \range 1 n\\ |B|=|A|}} \VV B s_{\lambda'}(z_B)$ is expressed as a sum of the $\ZZ C$, \textit{c.f.} (\ref{leftin}), the coefficients are elements of $\mathbb C[z_1,\dots,z_n]^{S_n}$ of strictly positive degree and therefore vanish in the coincident limit.
Thus, only partitions $\lambda'$ whose Ferrers diagrams ``fit'', in the obvious sense, inside the $(n-|A|)\times |A|$ ``frame'' defined by the lattice path of $A$, \textit{c.f.} figure \ref{staircasefig}, contribute in the limit. These are the partitions of the form $\lambda' = \lambdaof {A'}$ for some subset $A'\subseteq \range 1 n$ with $|A'| = |A|$.

Turning to $\phiout(u)$, we have from definition \ref{Zdef} and proposition \ref{VtoZ} that
\bea \phiout(u) \on \ZZ A &=&  \sum_{\substack{B\subseteq \range 1 n\\ |B|=|A|}}  \VV B  s_{\lambdaof A}(z_B)
                              E_{\left[\frac{ - q^{-1}u}{q - q^{-1}au}\right]}(z_{\comp B}) \\
&=&  \sum_C \sgn(w_{\comp C}) \ZZ C \sum_{r\geq 0} \left(\tfrac{ - q^{-1}u}{q - q^{-1}au}\right)^r \nn\\
&&\qquad\qquad\qquad\sum_B s_{\lambdaof A}(z_B) e_r(z_{\comp B}) s_{\lambdaof {\comp C}} (z_{\comp B}) \frac{1}{\prod_{\substack{i\in B\\j\in\comp B}} z_i-z_j}. \nn\eea
Here we may again use the Pieri formula, now for the polynomials in the $z_{\comp B}$, and, arguing as above, conclude that the non-vanishing summands in the coincident limit are those where ${\comp A} \in \mathsf e(\comp C,r)$; 
or, equivalently, $C \in \mathsf h(A,r)$. 
The result follows, on using (\ref{epssigneqn}).

Finally, to verify that $\phiin(u)$ and $\phiout(u)$ continue to commute in the coincident limit (so that the order of the factors in (\ref{factoredphi}) remains unimportant), it suffices to note the following: For any $r,s\geq 0$, the triplets of partitions $(\lambda,\lambda',\lambda'')$ such that $\lambda'$ can be obtained from $\lambda$ by adding $r$ boxes, no two in any one row, and $\lambda''$ can be obtained from $\lambda'$ by adding $s$ boxes, no two in any one column, are in bijection with the triplets $(\lambda,\lambda''',\lambda'')$ such that $\lambda'''$ can be obtained from $\lambda$ by adding $s$ boxes, no two in one column, and $\lambda''$ can be obtained from $\lambda'''$ by adding $r$ boxes, no two in one row. One way to see this is as follows. The \emph{skew diagram} $\theta=\lambda''-\lambda$ can be written as a disjoint union, $\theta=\sqcup_i\theta_i$, of finitely many \emph{connected components} $\theta_i$, each of which is a \emph{border strip} (see \eg 
 \cite{MacDonald} for the definitions). Consider each $\theta_i$ in turn. Label every box of $\theta_i$ by an arrow, \tikz[baseline=1pt,scale=.4]{\sqr{(0,0)}} if the box is in $\lambda'$, \tikz[baseline=1pt,scale=.4]{\sqd{(0,0)}} otherwise. By construction, precisely one of the following must hold: (i)  every row has exactly one \tikz[baseline=1pt,scale=.4]{\sqr{(0,0)}} or (ii) every column has exactly one \tikz[baseline=1pt,scale=.4]{\sqd{(0,0)}}. In case (i), reverse the sequence of arrows in each row; in case (ii), reverse the sequence of arrows in each column. Repeat for each $\theta_i$. The resulting labelling defines $\lambda'''$. This map is clearly involutive. As examples,
\be \tikz[baseline=30,scale=.4]
{\sqr{(1,1)} \sqr{(1,2)} \sqd{(2,2)} \sqd{(3,2)} \sqr{(3,3)} \sqr{(3,4)} \sqd{(4,4)} \sqd{(5,4)} \sqd{(6,4)}} 
\quad\leftrightarrow\quad \tikz[baseline=30,scale=.4]
{\sqr{(1,1)} \sqd{(1,2)} \sqd{(2,2)} \sqr{(3,2)} \sqr{(3,3)} \sqd{(3,4)} \sqd{(4,4)} \sqd{(5,4)} \sqr{(6,4)}}\ee
are a pair of border strips of type (i), while
\be \tikz[baseline=30,scale=.4]
{\sqd{(1,1)} \sqr{(1,2)} \sqd{(2,2)} \sqd{(3,2)} \sqr{(3,3)} \sqr{(3,4)} \sqd{(4,4)} \sqd{(5,4)} \sqd{(6,4)}}
\quad\leftrightarrow\quad \tikz[baseline=30,scale=.4]
{\sqr{(1,1)} \sqd{(1,2)} \sqd{(2,2)} \sqr{(3,2)} \sqr{(3,3)} \sqd{(3,4)} \sqd{(4,4)} \sqd{(5,4)} \sqd{(6,4)}} \ee
are a pair of type (ii).
\finproof 
\end{proof}

\begin{remark}
Note that $\frac{ - q^{\pm 1}u}{q^{\mp 1} - q^{\pm 1}au}$ has expansion $0-q^{\pm 2}u +O(u^2)$ about $u=0$, but expansion $1/a + O(u^{-1})$ about $u=\8$. This originates in the choice of $z_j=a_j-a$ rather than $a_j^{-1}-a^{-1}$ in definition \ref{Zdef}. A consequence is that $\phi^+_0$ is manifestly diagonal, while $\phi^-_0$ is not. However, it follows from the definition (\ref{phiu}) that  $\phi^+_0\phi^-_0=\id$ 
and hence that $\phi^-_0$ must be diagonal too. It is interesting to check this explicitly. 
Consider using proposition \ref{phiaction} to expand $\phi_\wh(u)|_{u=\8} \circ \phi_\bl(u)|_{u=\8} \on \ZZ A$ as a power series in $a^{-1}$. For any $k>0$ the coefficient of the $k$th power is proportional to $\sum_D \sum_{j=0}^k (-1)^j  \sum_C \ZZ D$,
where $\sum_D$ is over $D$ such that $\lambdaof D-\lambdaof A$ has $k$ boxes, 
while $\sum_C$ is over all $C$, if any, such that $C \in \mathsf e(A,k-j)$ and $D \in \mathsf h(C, j)$. 
Labelling the boxes  of $\lambdaof D-\lambdaof A$ by \tikz[baseline=1pt,scale=.3]{\sqr{(0,0)}} or  \tikz[baseline=1pt,scale=.3]{\sqd{(0,0)}} as in the proof above, there is, for any given $D$, 
 an involution $(j,C)\leftrightarrow (j',C')$, $|j'-j|= 1$, on the allowed pairs $(j,C)$, defined by flipping the arrow of the top-right box of $\lambdaof D-\lambdaof A$. For example, the following border strips are paired,
\be \tikz[baseline=30,scale=.4]
{\sqr{(1,1)} \sqd{(1,2)} \sqd{(2,2)} \sqr{(3,2)} \sqr{(3,3)} \sqd{(3,4)} \sqd{(4,4)} \sqd{(5,4)} \sqr{(6,4)}}
\quad\leftrightarrow\quad  \tikz[baseline=30,scale=.4]
{\sqr{(1,1)} \sqd{(1,2)} \sqd{(2,2)} \sqr{(3,2)} \sqr{(3,3)} \sqd{(3,4)} \sqd{(4,4)} \sqd{(5,4)} \sqd{(6,4)}}\ee
The alternating factor $(-1)^j$ ensures that the paired terms cancel in the sum.
\end{remark}

By similar arguments it is also possible to compute the explicit actions of the raising and lowering operators in the $\ZZ A$ basis (which gives a more direct demonstration that the basis is well-behaved). For brevity we shall simply state without proof the following
\begin{proposition}
Let \be \mathscr C^\mp(r,p,t,m) = a^{t+m-r-p} \sum_{s=0}^p (-1)^{p-s} q^{\pm(r-p+s)} (q^{\pm 1}-q^{\mp 1})^{m+s-r-p} \binom t s \binom {m-r} {p-s}.\ee
Then, in the coincident limit, for all $t\in\mathbb Z$ and all $A\subseteq \range 1 n$:
\be x^-_t\on\ZZ A =  \sum_{p=0}^{n-1}  \left(-1\right)^{|\{j\in A: j<p+1\}|} \sum_{r=0}^{|A|} \mathscr C^-(r,p,t,|A|) \sum_{C:C\setminus\{p+1\} \in \mathsf e(A,r)} Z_C \, ;\ee
and
\be x^+_t\on\ZZ A = (-1)^{|\comp A|} \sum_{p=0}^{n-1}  \left(-1\right)^{|\{j\in A: j>n-p\}|} \sum_{r=0}^{|\comp A|} (-1)^r \mathscr C^+(r,p,t,|\comp A|) \sum_{C:C\cup\{n-p\} \in \mathsf h(A,r)} Z_C \, .\ee
\end{proposition}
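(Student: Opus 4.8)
The plan is to mimic the proof of Proposition \ref{phiaction}, starting from the explicit action of the modes $x^\mp_t$ in the $\VV{}$-basis and pushing it through the change of basis of Definition \ref{Zdef} with the help of Propositions \ref{VtoZ} and \ref{innerprop}. Extracting the coefficient of $u^{-t}$ from (\ref{VAxmaction}), using $\delta(a_j/u)=\sum_n a_j^n u^{-n}$, gives
\[
x^-_t\on \VV B = \sum_{j\notin B} a_j^t\, \VV{B\cup\{j\}}\prod_{k\in B}\frac{a_kq-a_jq^{-1}}{a_k-a_j},
\]
and applying this to $\ZZ A=\sum_{|B|=|A|} s_{\lambdaof A}(z_B)\VV B$, then regrouping the double sum over $(B,j)$ into a single sum over $C=B\cup\{j\}$ (so that $B=C\setminus\{j\}$ with $j\in C$), yields
\[
x^-_t\on\ZZ A = \sum_{\substack{C\subseteq\range 1 n\\ |C|=|A|+1}}\VV C\, F_C,\qquad
F_C:=\sum_{j\in C} a_j^t\, s_{\lambdaof A}(z_{C\setminus\{j\}})\prod_{\substack{k\in C\\ k\neq j}}\frac{a_kq-a_jq^{-1}}{a_k-a_j}.
\]
The case of $x^+_t$ is entirely parallel, using (\ref{VAxpaction}), regrouping into $C=B\setminus\{j\}$ of size $|A|-1$, and producing a coefficient that is naturally a function of the complementary variables $z_{\comp C}$.

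First I would check that, after setting $a_i=a+z_i$, each $F_C$ is a symmetric Laurent polynomial in $z_C=(z_{C_1},\dots,z_{C_{|C|}})$. This follows exactly as in the final paragraph of the proof of Proposition \ref{loweringprop}: multiplying $F_C$ by $\prod_{i<j}(z_{C_i}-z_{C_j})$ produces a skew-symmetric Laurent polynomial, hence one divisible by the Vandermonde, so the apparent poles along $z_{C_i}=z_{C_j}$ cancel. Being symmetric, $F_C$ can then be expanded in the Schur basis of $\Lambda_{|C|}$ (regarded, as in the discussion preceding (\ref{catsum}), as a possibly infinite sum of homogeneous pieces when $t<0$), and I would apply Proposition \ref{innerprop} term by term: the sum $\sum_C \VV C\, s_\mu(z_C)$ is re-expressed through (\ref{leftin}) as a combination of the $\ZZ{C'}$ with coefficients in $\mathbb C[z_1,\dots,z_n]^{S_n}$, of which only the degree-zero ones survive the coincident limit. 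As in Proposition \ref{phiaction}, the surviving $\mu$ are precisely the $\lambdaof{C'}$ that fit inside the frame defined by the lattice path, which is what produces the combinatorial conditions $C\setminus\{p+1\}\in\mathsf{e}(A,r)$ for $x^-$ (via the Pieri rule, no two added boxes in any one row) and $C\cup\{n-p\}\in\mathsf{h}(A,r)$ for $x^+$ (where working with $z_{\comp C}$ and complementing partitions interchanges rows and columns). The signs $(-1)^{|\{j\in A:j<p+1\}|}$ and $(-1)^{|\comp A|}$ emerge from the $\sgn(w_{\comp C})$ and $\sgn(w_C)$ factors of (\ref{epssigneqn}) as the distinguished index $p+1$ (resp.\ $n-p$) is inserted into, or removed from, the increasingly ordered tuple.

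The heart of the computation, and the step I expect to be the main obstacle, is the explicit evaluation of the Schur coefficients of $F_C$, which is what delivers the precise coefficient $\mathscr C^\mp(r,p,t,m)$. Writing $a_j^t=(a+z_j)^t=\sum_l\binom t l a^{t-l}z_j^l$ (a generalized binomial series when $t<0$) and $\frac{a_kq-a_jq^{-1}}{a_k-a_j}=\frac{a(q-q^{-1})+qz_k-q^{-1}z_j}{z_k-z_j}$, one must expand $F_C$ and identify, for each admissible pair $(r,p)$, the contribution of the Schur function indexed by the box-adding datum. The parameter $p$ records the row (resp.\ column) at which the extra box sits relative to the $r$ boxes added by the Pieri rule, and the inner sum over $s$ with the factors $(-1)^{p-s}$, $\binom t s\binom{m-r}{p-s}$, the powers $q^{\pm(r-p+s)}$ and $(q^{\pm1}-q^{\mp1})^{m+s-r-p}$, together with the overall power $a^{t+m-r-p}$, all come out of bookkeeping this single expansion, with $m=|A|$ for $x^-$ and $m=|\comp A|$ for $x^+$. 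This is a purely symmetric-function computation, but a delicate one: tracking simultaneously the $a$-expansion, the $q$-dependent numerators, and the Pieri combinatorics is where the real work lies. Once $\mathscr C^\mp$ is matched, assembling the surviving terms gives the two displayed formulas.
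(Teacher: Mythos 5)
The paper never proves this proposition: it is stated ``without proof'', with only the remark that it follows ``by similar arguments'' to Proposition \ref{phiaction}. Your outline is exactly that intended argument --- extract the mode $x^\mp_t$ from (\ref{VAxpaction})--(\ref{VAxmaction}), regroup the double sum over $(B,j)$ into a sum over $C$, observe that the resulting coefficient is a single symmetric function evaluated at $z_C$, expand it in Schur functions, and invoke the coincident-limit mechanism of Propositions \ref{VtoZ}, \ref{innerprop} and \ref{phiaction} so that only partitions fitting the $|C|\times(n-|C|)$ frame survive. That structural skeleton is sound, up to one imprecision: for $x^+_t$ the regrouped coefficient is $\sum_{j\in\comp C}a_j^t\, s_{\lambdaof A}(z_{C\cup\{j\}})\prod_{k\in\comp C,\,k\neq j}\frac{a_jq-a_kq^{-1}}{a_j-a_k}$, which mixes $z_C$ and $z_{\comp C}$ rather than being ``naturally a function of the complementary variables''; handling it requires the mixed Littlewood--Richardson manipulation used for $\phiout$ in the proof of Proposition \ref{phiaction} (\textit{c.f.} (\ref{rightin})), not Proposition \ref{innerprop} as literally stated.

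The genuine gap is that the entire quantitative content of the proposition is deferred rather than proved. The statement \emph{is} the explicit coefficient $\mathscr C^\mp(r,p,t,m)$ --- the sum over $s$ with $\binom t s\binom{m-r}{p-s}$, the powers $q^{\pm(r-p+s)}$, $(q^{\pm1}-q^{\mp1})^{m+s-r-p}$ and $a^{t+m-r-p}$ --- together with the precise indexing by pairs $(p,r)$, the conditions $C\setminus\{p+1\}\in\mathsf e(A,r)$ and $C\cup\{n-p\}\in\mathsf h(A,r)$, and the signs $(-1)^{|\{j\in A:\,j<p+1\}|}$, $(-1)^{|\comp A|}(-1)^{|\{j\in A:\,j>n-p\}|}(-1)^r$. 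None of this is derived: you explicitly flag the Schur-coefficient extraction as ``the main obstacle'' and assert the answer ``comes out of bookkeeping''. But that bookkeeping is the proof; everything you actually carry out is already contained in Propositions \ref{loweringprop} and \ref{phiaction} and yields only the qualitative fact that, in the limit, $x^\mp_t\on\ZZ A$ is some finite combination of the $\ZZ C$ with $|C|=|A|\pm1$. Note moreover that the claimed formula is a nontrivial rearrangement: a single $\ZZ C$ generally receives contributions from several pairs $(p,r)$ (\eg with $A=\{1,3\}$, $n=4$, the vector $\ZZ{\{1,3,4\}}$ receives both $(p,r)=(2,1)$ and $(p,r)=(3,0)$), so even the combinatorial shape of the answer --- not merely the values of $\mathscr C^\mp$ --- cannot be read off from your setup without actually performing the expansion of $(a+z_j)^t$ and of the factors $\frac{a(q-q^{-1})+qz_k-q^{-1}z_j}{z_k-z_j}$ in the Schur basis. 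As it stands, the proposal is a credible plan for a proof, not a proof.
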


\subsection{Jordan block structure}\label{Sec:JB} 
\begin{definition}
For each $m\in\range 0 n$ let 
\be\Vmn = \mathrm{span}_{\mathbb C} \left\{\ZZ A:|A|=m\right\}\ee 
and, for all $k\geq 0$, let
\be F_k \Vmn = \mathrm{span}_{\mathbb C} \left\{\ZZ A: |A|=m, \, |\lambdaof A| \geq k\right\},\ee
\be G_k \Vmn = \mathrm{span}_{\mathbb C} \left\{\ZZ A: |A|=m, \, |\lambdaof A|  = k\right\},\ee
and
\be \pi_k : \Vmn \to G_k\Vmn;\quad \sum_{\substack{A\subseteq \range 1 n\\ |A| = m}} v_A \ZZ A \mapsto 
                                   \sum_{\substack{A\subseteq \range 1 n\\ |A| = m\\ |\lambdaof A|=k }} v_A \ZZ A.\ee  
\end{definition}
The $F_k\Vmn$ define a decreasing filtration of $\Vmn$,
\be \Vmn = F_{0} \Vmn \supsetneq F_1 \Vmn \supsetneq\dots\supsetneq F_{m(n-m)} \Vmn\supsetneq F_{m(n-m)+1} \Vmn = \{0\},\ee
with $G_k \Vmn \cong {F_k\Vmn}\big/{F_{k+1}\Vmn}\, \forall k\geq 0$ the associated gradation: 
\be \Vmn = \bigoplus_{k=0}^{m(n-m)} G_k \Vmn,\qquad v = \sum_{k=0}^{m(n-m)} \pi_k(v)  \quad \forall v\in \Vmn\,.\ee
Now denote by
\be \label{t-binomial} {{n}\choose{m}}_{\!\! t} = \frac{(n)_t}{(m)_t(n-m)_t}\, ,  \qquad \mbox{with} \qquad (n)_t = \prod_{k=1}^{n} \frac{1-t^{k}}{1-t}\, ,\ee
the $t$-binomial coefficients.
It is known -- see \eg \cite{MacDonald} -- that these yield the counting function for the dimensions of the subspaces of definite grade:
\be \binomt n m = \sum_{k=0}^{m(n-m)} t^k \dim G_k \Vmn.\label{tbcf}\ee
\begin{example} In the case $n=4$, $m=2$, $\dim G_0=\dim G_1 = \dim G_3 = \dim G_4=1$ and $\dim G_2 = 2$, which can be pictured as follows:
\be \begin{matrix}
\binomt 4 2 =& 1 &+& t &+& 2t^2 &+& t^3 &+& t^4 \\ &
\tikz[baseline=0pt,scale=.5]{
\path [fill=gray!50] (0,0) -- (1,0) -- (2,0) -- (2,1) -- (2,2) -- (2,0)--cycle;
\draw [very thick]   (0,0) -- (0,1) -- (0,2) -- (1,2) -- (2,2);
\draw[gray] (0,0) grid (2,2); } &&
\tikz[baseline=0pt,scale=.5]{
\path [fill=gray!50] (0,1) -- (1,1) -- (1,2) -- (0,2) -- cycle;
\draw [very thick]   (0,0) -- (0,1) -- (1,1) -- (1,2) -- (2,2);
\draw[gray] (0,0) grid (2,2); } &&
\tikz[baseline=0pt,scale=.5]{
\path [fill=gray!50] (0,1) -- (2,1) -- (2,2) -- (0,2) --cycle;
\draw [very thick]   (0,0) -- (0,1) -- (1,1) -- (2,1) -- (2,2);
\draw[gray] (0,0) grid (2,2); } &&
\tikz[baseline=0pt,scale=.5]{
\path [fill=gray!50] (0,0) -- (1,0) -- (1,1) -- (2,1) -- (2,2) -- (0,2)--cycle;
\draw [very thick]   (0,0) -- (1,0) -- (1,1) -- (2,1) -- (2,2);
\draw[gray] (0,0) grid (2,2); } &&
\tikz[baseline=0pt,scale=.5]{
\path [fill=gray!50] (0,0) -- (0,1) -- (0,2) -- (1,2) -- (2,2) -- (2,0)--cycle;
\draw [very thick]   (0,0) -- (1,0) -- (2,0) -- (2,1) -- (2,2);
\draw[gray] (0,0) grid (2,2); } \\
& && &&
\tikz[baseline=0pt,scale=.5]{
\path [fill=gray!50] (0,0) -- (1,0) -- (1,1) -- (1,2) -- (0,2) --cycle;
\draw [very thick]   (0,0) -- (1,0) -- (1,1) -- (1,2) -- (2,2);
\draw[gray] (0,0) grid (2,2); } && &&
\end{matrix}\ee
\end{example}

In view of (\ref{factoredphi}) and proposition \ref{phiaction}, each $\Vmn$ is an $l$-weight space -- \textit{c.f.} (\ref{lweightspacesdef}) -- of the module $\V_{(a^n)}$: let
\be \psimnu = \sum_{r=0}^\8 \psimnur u^{\pm r}:= \phi^\pm(u) - \gamma_{m,n}^\pm(u)\,\id,\quad \gamma_{m,n}(u) = \frac{(q^{-1} - q a u)^m (q - q^{-1}au)^{n-m}}{(1-a u)^n};\nn \ee
then it is clear from proposition \ref{phiaction} and the remark following that $\psi^\pm_{m,n; 0}=0$ and that $\psimnu$ is strictly upper triangular with respect to the filtration $\{F_k\Vmn\}$, in the sense that
\be \psimnu\left( F_k\Vmn \right) \subseteq  F_{k+1} \Vmn[[u^{\pm 1}]] \ee
for all $k\in\range 0 {m(n-m)-1}$. In order to compute the lengths and multiplicities of the Jordan chains of $\phi^\pm_{\pm r}$, $r\in \mathbb N_0$, it will actually suffice to consider only the leading superdiagonal elements of $\psimnu$, \ie to consider the map $\sum_{k=0}^{m(n-m)} \pi_{k+1} \circ \psimnu \circ \pi_k$. Define $N_{m,n}^{\pm}(u) = \sum_{r=1}^{\8} N^\pm_{m,n;\pm r} u^{\pm r} \in \mathbb C[[u^{\pm 1}]]$ by setting, for all $r \in \mathbb N^*$,
\be \label{Nmnu} N_{m,n;\pm r}^\pm :=  a^{\pm r-1} q^{\pm n - (1 \pm 1) m} (q^{-2} - q^2) P_{m,n;r-1}(q) \ee
with
\be P_{m,n;r}(q):=\sum_{p_1=r-n+2}^r (-1)^{r-p_1} {{n+p_1-1}\choose {p_1}} \sum_{p_2=0}^{m-1} {{m-1}\choose {p_2}}{{n-m-1}\choose {r-p_1-p_2}}q^{2(2p_2-r+p_1)}\,.\nn\ee
Define also the linear map \be X : \Vmn \to \Vmn;\quad X\on \ZZ A = \sum_{C \in \mathsf e(A,1)} \ZZ C \label{Xdef}\, .\ee
\begin{proposition}
\label{prop:psiNmnu}
For all $k \in \range{0}{m(n-m)}$,
\be \pi_{k+1} \circ \psimnu \circ \pi_k = N_{m,n}^\pm(u) \, X \, . \ee
\end{proposition}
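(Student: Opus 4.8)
The plan is to reduce the statement to a single scalar identity by combining the factorization (\ref{factoredphi}) with proposition \ref{phiaction}. In the coincident limit $\phimid(u)$ keeps only its diagonal leading term, which on $\Vmn$ equals $\gamma_{m,n}^\pm(u)\,\id$; since this scalar is independent of $A$ it commutes with $\phiin(u)$ and $\phiout(u)$ and factors out, so that on $\Vmn$ one has $\phi^\pm(u)\on\ZZ A = \gamma_{m,n}^\pm(u)\,\phiin(u)\circ\phiout(u)\on\ZZ A$. By proposition \ref{phiaction} the composite $\phiin(u)\circ\phiout(u)$ acts on $\ZZ A$ by first adding $s$ boxes of $\mathsf h$-type and then $r$ boxes of $\mathsf e$-type, weighted by $\bigl(\tfrac{-qu}{q^{-1}-qau}\bigr)^r\bigl(\tfrac{q^{-1}u}{q-q^{-1}au}\bigr)^s$. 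Each added box raises $|\lambdaof{\cdot}|$ by one, so when the input lies in grade $k$ the component landing in grade $k+1$ is exactly the $r+s=1$ part.

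First I would isolate this superdiagonal part. Because $\gamma_{m,n}^\pm(u)\,\id$ is diagonal, forming $\psimnu=\phi^\pm(u)-\gamma_{m,n}^\pm(u)\,\id$ does not alter any grade-raising component, so $\pi_{k+1}\circ\psimnu\circ\pi_k=\pi_{k+1}\circ\phi^\pm(u)\circ\pi_k$, namely the $r+s=1$ contribution. Only two terms survive: $(r,s)=(1,0)$ gives $\tfrac{-qu}{q^{-1}-qau}\sum_{C\in\mathsf e(A,1)}\ZZ C$, and $(r,s)=(0,1)$ gives $\tfrac{q^{-1}u}{q-q^{-1}au}\sum_{C\in\mathsf h(A,1)}\ZZ C$. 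The key combinatorial observation is that for a \emph{single} added box the conditions ``no two in one row'' and ``no two in one column'' are both vacuous, whence $\mathsf e(A,1)=\mathsf h(A,1)$; by (\ref{Xdef}) both sums equal $X\on\ZZ A$. Therefore, uniformly in $k$,
\be\nn \pi_{k+1}\circ\psimnu\circ\pi_k=\gamma_{m,n}^\pm(u)\left(\tfrac{-qu}{q^{-1}-qau}+\tfrac{q^{-1}u}{q-q^{-1}au}\right)X. \ee

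It then remains to identify the scalar prefactor with $N_{m,n}^\pm(u)$. Placing the bracket over a common denominator, the numerator is $-qu(q-q^{-1}au)+q^{-1}u(q^{-1}-qau)=(q^{-2}-q^2)u$, the $O(u^2)$ cross terms cancelling; multiplying by $\gamma_{m,n}^\pm(u)=\tfrac{(q^{-1}-qau)^m(q-q^{-1}au)^{n-m}}{(1-au)^n}$ then cancels one factor of each of $q^{-1}-qau$ and $q-q^{-1}au$, leaving the closed form
\be\nn (q^{-2}-q^2)\,u\,\frac{(q^{-1}-qau)^{m-1}(q-q^{-1}au)^{n-m-1}}{(1-au)^n}. \ee
Note this is $O(u)$ at $u=0$ and $O(u^{-1})$ at $u=\infty$, so its $u^0$ coefficient vanishes, consistent with $\psi^\pm_{m,n;0}=0$. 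To finish I would expand it as a series in $u$ (for $\phi^+$) and in $u^{-1}$ (for $\phi^-$): writing $(q^{-1}-qau)^{m-1}=q^{-(m-1)}(1-q^2au)^{m-1}$ and $(q-q^{-1}au)^{n-m-1}=q^{n-m-1}(1-q^{-2}au)^{n-m-1}$ and applying the binomial theorem to all three factors, the coefficient of $u^{\pm r}$ reorganizes — with $p_1$ the power drawn from $(1-au)^{-n}$ and $p_2$ that from the first factor — into the triple binomial sum defining $P_{m,n;r-1}(q)$ together with the stated prefactor, reproducing (\ref{Nmnu}); the leading case $P_{m,n;0}=1$ fixes the normalisation.

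I expect the last step to be the main obstacle: the multinomial bookkeeping that recasts $[u^{\pm r}]$ of the rational function into the constrained sum $P_{m,n;r-1}$, and in particular keeping track of the signs and of the $q\leftrightarrow q^{-1}$ exchange that distinguishes the $u$- and $u^{-1}$-expansions, so that the $\pm$-dependent prefactor in (\ref{Nmnu}) emerges correctly. By contrast the representation-theoretic content — that only single-box moves survive on the superdiagonal, and that the two one-box sums coincide and assemble into the single operator $X$ — is immediate once the factorization and proposition \ref{phiaction} are in hand.
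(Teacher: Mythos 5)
Your proposal is correct and follows essentially the same route as the paper: the paper's proof likewise invokes proposition \ref{phiaction} to obtain the identity $\pi_{k+1}\circ\psimnu\circ\pi_k=\gamma_{m,n}(u)\left(\tfrac{-qu}{q^{-1}-qau}+\tfrac{q^{-1}u}{q-q^{-1}au}\right)X$ and then simply states that the result follows by expanding in powers of $u^{\pm1}$. The details you supply (the vanishing of the diagonal subtraction on the superdiagonal, $\mathsf e(A,1)=\mathsf h(A,1)$, and the closed-form simplification of the prefactor) are correct fillings-in of steps the paper leaves implicit.
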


\begin{proof}
By proposition \ref{phiaction}, we have
\be \pi_{k+1} \circ \psimnu \circ \pi_k =  \gamma_{m,n}(u) \left(\frac{- qu}{q^{-1} - qau} +\frac{q^{-1}u}{q - q^{-1}au}\right) X \, . \ee
The result follows by expanding the right hand side into formal power series in $u^{\pm 1}$. \finproof
\end{proof}

We need to understand the Jordan chains of $X$. This problem has been solved in \cite{Proctor} (where our $\Vmn$ is $\widetilde L(m,n-m)$). The set $L(m,n-m) := \{A\subseteq \range 1 n: |A|=m\}$ has a partial ordering in which $C\geq A$ if and only if the Ferrers diagram of $\lambda^C$ covers that of $\lambda^A$. $L(m,n-m)$ is moreover a ranked poset, and the ranking is identical with the grading $G_k$ in an obvious sense. Figure \ref{graphfig} shows, for certain examples, the Hasse diagram of this poset, \ie the directed acyclic graph whose edges are those pairs $(A\to C)$ such that the Ferrers diagram of $\lambdaof C$ is obtained from that of $\lambdaof A$ by adding one box. 
 Following \cite{Proctor}, let us further define
\bea H : \Vmn \to \Vmn;\quad H\on \ZZ A &=& \ZZ A \left(2|\lambdaof A| - m(n-m)\right) \nn\\
     Y : \Vmn \to \Vmn;\quad Y\on \ZZ A &=& \sum_{B: A \in \mathsf e(B,1)} \ZZ B (n-b)b \label{HYdef}\eea
where $b$ is the unique element of $B\setminus A$. It is then a straightforward exercise to verify 
\begin{proposition}[\cite{Proctor}]\label{proctorprop}
These maps $X$, $Y$, $H$ are a realization of $\mf{sl}_2$, \ie
\be \left[H,X\right] = 2X \quad \left[H,Y\right] = -2Y,\quad \left[X,Y\right] = H.\ee 
\end{proposition}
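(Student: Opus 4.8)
The plan is to verify the three relations directly on the basis $\{\ZZ A:|A|=m\}$ of $\Vmn$, using the combinatorial model in which each $A\subseteq\range 1 n$ with $|A|=m$ is encoded by the binary word $w(A)=w_1\cdots w_n$, where $w_j=1$ if $j\in A$ and $w_j=0$ otherwise. Under this encoding, adding a single box to $\lambdaof A$ (the operation defining $\mathsf e(A,1)$) corresponds to replacing an adjacent pair $w_jw_{j+1}=10$ by $01$, while removing a box is the reverse replacement $01\mapsto10$; and the element $b\in B\setminus A$ in the definition of $Y$ is exactly the position $j$ at which such a $01\mapsto10$ move is performed, so that its weight $(n-b)b$ equals $(n-j)j$.

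The two relations involving $H$ are immediate from the grading. Since every $C\in\mathsf e(A,1)$ satisfies $|\lambdaof C|=|\lambdaof A|+1$, the operator $X$ raises the grade by one; as $H$ acts on $\ZZ A$ by the scalar $2|\lambdaof A|-m(n-m)$, one gets $HX\on\ZZ A-XH\on\ZZ A=2X\on\ZZ A$, i.e. $[H,X]=2X$. Likewise every $B$ contributing to $Y\on\ZZ A$ has $|\lambdaof B|=|\lambdaof A|-1$, giving $[H,Y]=-2Y$.

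The substance of the proposition is $[X,Y]=H$, and here the plan is to localise. I would write $X=\sum_{j=1}^{n-1}X_j$ and $Y=\sum_{j=1}^{n-1}(n-j)j\,Y_j$, where $X_j$ (resp. $Y_j$) performs the move $10\mapsto01$ (resp. $01\mapsto10$) on the letters in positions $(j,j+1)$ and annihilates $\ZZ A$ when that pattern is absent. Then $[X,Y]=\sum_{j,k}(n-k)k\,[X_j,Y_k]$, and the crux is that $[X_j,Y_k]=0$ whenever $j\neq k$. For $|j-k|\geq2$ the supports $\{j,j+1\}$ and $\{k,k+1\}$ are disjoint, so the two local operators act on different letters and commute trivially. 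The only delicate case is $|j-k|=1$, which I would dispose of by a direct check on the eight three-letter words occupying positions $(j,j+1,j+2)$: a short enumeration shows that both orderings $X_jY_{j+1}$ and $Y_{j+1}X_j$ annihilate every such word — the obstruction being that after either first move the pattern required for the second move is never present — and the same holds for $[X_{j+1},Y_j]$. I expect this three-letter bookkeeping to be the one genuinely error-prone step, since it hinges on the precise $10/01$ conventions.

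Once the off-diagonal commutators are gone, only the diagonal terms survive, $[X,Y]=\sum_{j=1}^{n-1}(n-j)j\,[X_j,Y_j]$, and a one-pair computation shows that $[X_j,Y_j]$ acts on $\ZZ A$ by the scalar $\epsilon_j(A):=w_{j+1}-w_j$, namely $+1$ on $01$, $-1$ on $10$, and $0$ on $00$ and $11$. It then remains to identify $\sum_{j=1}^{n-1}(n-j)j\,(w_{j+1}-w_j)$ with $2|\lambdaof A|-m(n-m)$, which I would do by Abel summation: setting $c_j=(n-j)j$ so that $c_0=c_n=0$, one finds $\sum_{j=1}^{n-1}c_j(w_{j+1}-w_j)=\sum_{j=1}^{n}(c_{j-1}-c_j)w_j=\sum_{j=1}^n(2j-n-1)w_j=2\sum_{j\in A}j-(n+1)m$. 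Finally $|\lambdaof A|=\sum_{i=1}^m(A_i-i)=\sum_{j\in A}j-\tfrac12 m(m+1)$ turns this into $2|\lambdaof A|+m(m+1)-(n+1)m=2|\lambdaof A|-m(n-m)$, which is precisely the scalar by which $H$ acts. This establishes $[X,Y]=H$ and completes the verification.
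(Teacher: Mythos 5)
Your proof is correct, and I checked each step: the dictionary between adding a box to $\lambdaof A$ and the local swap $10\mapsto 01$ on the binary word is exactly right (replacing $A_i$ by $A_i+1$ raises one part of $\lambdaof A$ by one, and the element $b\in B\setminus A$ in the definition of $Y$ is indeed the swap position, so the weight $(n-b)b$ matches your $(n-j)j$); the two $H$-relations do follow immediately from the grading; the off-diagonal commutators $[X_j,Y_k]$ vanish, including the delicate $|j-k|=1$ cases, where your enumeration is right — after either local move the pattern needed for the adjacent move is never present, so all four compositions $X_jY_{j\pm1}$, $Y_{j\pm1}X_j$ annihilate every word; the diagonal commutator $[X_j,Y_j]$ acts by $w_{j+1}-w_j$; and the Abel summation with $c_j=(n-j)j$, $c_{j-1}-c_j=2j-n-1$, together with $|\lambdaof A|=\sum_{j\in A}j-\tfrac12 m(m+1)$, recovers the eigenvalue $2|\lambdaof A|-m(n-m)$ of $H$. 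Note that the paper itself offers no argument here: it attributes the proposition to Proctor and dismisses it as ``a straightforward exercise to verify.'' So your proposal is not an alternative to the paper's proof but a complete execution of the exercise the paper declines to carry out, and the localization of $X$ and $Y$ into adjacent-transposition operators $X_j$, $Y_j$ on binary words is an efficient way to organize it, since it isolates the only nontrivial content ($[X,Y]=H$) into a finite local check plus a one-line telescoping identity.
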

Under this action, $\Vmn$ decomposes into a direct sum of irreducible representations, $\Vmn=\oplus_{t=1}^T \Vmn^{(t)}$, $T\in \mathbb N$, whose lowest weight vectors we shall write as $(s_t)_{1\leq t\leq T}$. We have  
\be Y\on s_t = 0,\quad H\on s_t =  -2h_ts_t,\,\, h_t\in \alf\mathbb N\ee
and hence $s_t\in G_{\half m(n-m) -h_t}\Vmn$ and $\dim(\Vmn^{(t)})=2h_t+1$; let us choose to order the summands such that these dimensions form a weakly decreasing sequence. Then 
\be \left( X^p\on s_t\right)_{1\leq t\leq T; 0\leq p \leq {2h_t}} \ee
is a basis of $\Vmn$ in which by construction $X$ is in Jordan normal form. The Jordan chains are the irreps $\Vmn^{(t)}$ and we know, by considering how these irreps are constructed using the usual techniques of $\mf{sl}_2$ representation theory, that $\dim(\Vmn^{(t)})$ is equal to the number of distinct $k$ such that $\dim G_k\Vmn \geq t$. 
(Hence, in particular, the sequence $(\dim G_k\Vmn)_{1\leq k \leq m(n-m)}$ is unimodal and symmetric in $k\leftrightarrow m(n-m)-k$.)
For example, in the cases shown in figure \ref{graphfig}, we have Jordan chains of lengths
\begin{align} \V_{1,6}&: \qquad (6) \\
              \V_{2,6}&: \qquad (9,5,1) \\
              \V_{3,6}&: \qquad (10,6,4) .\end{align}
Next we note the following elementary result. 
\begin{lemma}
\label{Flemma}
Suppose that $V=F_{k_0}V \supsetneq F_{k_0+1} \supsetneq \dots \supsetneq F_{k_1}V \supsetneq F_{k_1+1}V=\{0\}$ is a decreasing filtration of the finite dimensional $\mathbb C$-vector space $V$ 
and that, for some index sets $(I_k)_{k_0\leq k\leq k_1}$, $(v_{i,k})_{i\in I_k,k_0\leq k\leq k_1}$ is a basis of $V$ such that $v_{i,k}\in F_kV$ for all $i\in I_k$, $k_0\leq k\leq k_1$. If there exists a set $(w_{i,k})_{i\in I_k,k_0\leq k\leq k_1}$ of vectors in $V$ such that $w_{i,k} = v_{i,k} \mod F_{k+1}V$  for all $i\in I_k$, $k_0\leq k\leq k_1$ then  $(w_{i,k})_{i\in I_k,k_0\leq k\leq k_1}$ is also a basis of $V$.
\end{lemma}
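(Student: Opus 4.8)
The plan is to view the passage from $(v_{i,k})$ to $(w_{i,k})$ as a change of basis that is unitriangular with respect to the filtration, and to read off invertibility from this triangular shape. Concretely, I would order the index pairs $(i,k)$ so that larger $k$ (deeper in the filtration) comes first, and aim to show that the matrix $M$ expressing the $w_{i,k}$ in terms of the $v_{j,\ell}$ has $1$'s on the diagonal and possibly nonzero off--diagonal entries only in positions coupling $(i,k)$ to $(j,\ell)$ with $\ell>k$. If so then $M=\mathrm{Id}+N$ with $N$ strictly triangular, whence $\det M=1$ and $(w_{i,k})$ is automatically a basis.

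To establish the triangular shape I would first record that each $w_{i,k}$ in fact lies in $F_kV$, since $v_{i,k}\in F_kV$ by hypothesis and $w_{i,k}-v_{i,k}\in F_{k+1}V\subseteq F_kV$. The essential input is that the basis $(v_{i,k})$ is \emph{adapted} to the filtration, \ie that $(v_{i,k})_{i\in I_k,\,k\geq\ell}$ is a basis of $F_\ell V$ for every $\ell$ (equivalently, for each $k$ the classes $\bar v_{i,k}$, $i\in I_k$, form a basis of the graded piece $G_kV\cong F_kV/F_{k+1}V$). This holds in the situation of interest because the $v_{i,k}$ are the vectors $\ZZ A$ with $k=|\lambdaof A|$, for which $\{\ZZ A:|\lambdaof A|=k\}$ is by construction a basis of $G_k\Vmn$. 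Granting it, the congruence $w_{i,k}\equiv v_{i,k}\pmod{F_{k+1}V}$ lets me expand $w_{i,k}=v_{i,k}+\sum_{\ell>k,\,j}c^{(i,k)}_{j,\ell}\,v_{j,\ell}$, which is exactly the claimed unitriangular form.

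I would then make this rigorous, while avoiding any explicit manipulation of $M$, by a downward induction on $\ell$ proving that $(w_{i,k})_{i\in I_k,\,k\geq\ell}$ is a basis of $F_\ell V$. The base case $\ell=k_1+1$ is vacuous. For the inductive step, given $v\in F_\ell V$ I would use that $\bar v\in G_\ell V$ is a combination of the $\bar w_{i,\ell}$ (which span $G_\ell V$), subtract off the matching combination of the $w_{i,\ell}$ to land in $F_{\ell+1}V$, and invoke the inductive hypothesis; a dimension count, $\sum_{k\geq\ell}|I_k|=\dim G_\ell V+\dim F_{\ell+1}V=\dim F_\ell V$, then upgrades spanning to a basis. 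Taking $\ell=k_0$ yields the result. The computation is entirely routine; the only genuine point requiring care — and the step I would single out — is the adaptedness of $(v_{i,k})$ to the filtration, since it is this compatibility, rather than the bare membership $v_{i,k}\in F_kV$, that forces the triangular structure and hence the conclusion.
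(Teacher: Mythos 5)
Your proof is correct, and there is in fact nothing in the paper to compare it against: the paper introduces this lemma as an ``elementary result'' and supplies no proof at all. Your argument --- the unitriangular change of basis with respect to a filtration-compatible ordering, made rigorous by downward induction on the filtration degree --- is the standard one and works.

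The caveat you single out is not merely a matter of care; it is essential, and you are right that bare membership $v_{i,k}\in F_kV$ does not suffice. As literally stated the lemma is false: take $V=\mathbb C^3$ with $F_0V=V\supsetneq F_1V=\mathrm{span}(e_2,e_3)\supsetneq F_2V=\{0\}$, index sets $I_0=\{1,2\}$, $I_1=\{1\}$, and the basis $v_{1,0}=e_1$, $v_{2,0}=e_2$, $v_{1,1}=e_3$ (each $v_{i,k}$ does lie in $F_kV$); then $w_{1,0}=e_1$, $w_{2,0}=0$, $w_{1,1}=e_3$ satisfy $w_{i,k}\equiv v_{i,k} \bmod F_{k+1}V$ but are certainly not a basis. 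The failure is exactly the one you identify: $v_{2,0}$ lies in $F_1V$ yet carries the label $k=0$, so the classes $\bar v_{i,0}$ do not form a basis of $G_0V$ and $|I_0|>\dim G_0V$. So what you prove is the corrected statement with adaptedness as a hypothesis, which is the statement the paper actually needs and uses. One small repair to your justification of adaptedness at the point of use: in the proposition following the lemma, the vectors playing the role of the $v_{i,k}$ are not the $\ZZ A$ themselves but the $X^p\on s_t$ (and, one induction step later, the $(\psimnur)^p\on \tilde s_t$). Adaptedness holds for these because $s_t\in G_{\half m(n-m)-h_t}\Vmn$ and $X$ raises the grade by exactly one, so each $X^p\on s_t$ lies in a single graded piece $G_k\Vmn$; and any basis whose members lie in the graded pieces of a grading that splits the filtration is automatically adapted. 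With that verification inserted, your proof is complete and correctly fills a (small but genuine) gap in the paper's statement.
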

The relevance of the above $\mathfrak{sl}_2$-decomposition to the Jordan structure of $\phi^\pm(u)$ is that
\begin{proposition} The dimensions and multiplicities of the Jordan blocks of $\phi^\pm_{\pm 1}|_{\Vmn}$ coincide with those of $X$. If in addition the deformation parameter $q \in \Cx$ is transcendental, the dimensions and multiplicities of the Jordan blocks of $\phi^\pm_{\pm r}|_{\Vmn}$ coincide with those of $X$, for all $r \in \mathbb N^*$.
\end{proposition}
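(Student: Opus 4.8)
Fix a choice of sign and an integer $r\in\mathbb N^*$, and abbreviate $\psi:=\psimnur$. Since $\Vmn$ is a single $l$-weight space I would write $\phi^\pm_{\pm r}\big|_{\Vmn}=\gamma^\pm_{m,n;\pm r}\,\id+\psi$, and because $\psi$ is strictly upper-triangular with respect to the finite filtration $\{F_k\Vmn\}$ it is nilpotent; adding the scalar $\gamma^\pm_{m,n;\pm r}\,\id$ only shifts the single eigenvalue and leaves the Jordan block sizes unchanged, so it suffices to compute the Jordan type of the nilpotent operator $\psi$. By proposition \ref{prop:psiNmnu} its leading superdiagonal part is $\pi_{k+1}\circ\psi\circ\pi_k=N^\pm_{m,n;\pm r}\,X$, so $\psi=N^\pm_{m,n;\pm r}\,X+R$ with $R(F_k\Vmn)\subseteq F_{k+2}\Vmn$. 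The first thing to record is that the scalar $N^\pm_{m,n;\pm r}$ is non-zero in the relevant range: for $r=1$ one has $P_{m,n;0}(q)=1$ and $q^{-2}-q^2\neq 0$ (as $q$ is not a root of unity), so $N^\pm_{m,n;\pm 1}\neq 0$ unconditionally; for general $r$ the factor $P_{m,n;r-1}(q)$ is a non-zero Laurent polynomial in $q$, so a transcendental $q$ avoids its finitely many roots, simultaneously for all $r$.

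Next I would invoke proposition \ref{proctorprop}: $(X,H,Y)$ is an $\mathfrak{sl}_2$-triple, so $\Vmn$ decomposes into irreducibles, the grading $G_k\Vmn$ is, up to an overall shift, the weight grading, and $X$ enjoys the hard Lefschetz property, $X^{j}:G_k\Vmn\to G_{k+j}\Vmn$ having maximal rank $\min(\dim G_k\Vmn,\dim G_{k+j}\Vmn)$. Writing the irreducible lengths as $(L_t)_{1\le t\le T}$, the Jordan type of $X$ is the partition with parts $L_t$ (equivalently the self-conjugate partition encoded by $\binomt n m$ through (\ref{tbcf})), and
\be \dim\ker X^{j}=\sum_{t=1}^{T}\min(j,L_t). \ee

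The heart of the argument is then to sandwich the Jordan type of $\psi$ between two bounds that both equal that of $X$. For the lower bound I would degenerate $\psi$ to its leading part $N^\pm_{m,n;\pm r}\,X$ by the standard one-parameter rescaling $t\mapsto t^{\,\mathrm{grade}}$ (which preserves rank for $t\neq 0$); since rank is lower semicontinuous it can only drop in the limit, giving $\mathrm{rank}\,\psi^{\,j}\ge\mathrm{rank}\,(N^\pm_{m,n;\pm r}X)^{j}=\mathrm{rank}\,X^{j}$ for all $j$ (using $N^\pm_{m,n;\pm r}\neq 0$), equivalently $\dim\ker\psi^{\,j}\le\dim\ker X^{j}$. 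For the upper bound I would use that $\psi^{\,j}(F_k\Vmn)\subseteq F_{k+j}\Vmn$, whence
\be \dim\ker\psi^{\,j}\ \ge\ \dim F_k\Vmn-\dim F_{k+j}\Vmn=\sum_{l=k}^{k+j-1}\dim G_l\Vmn \ee
for every $k$. Maximising the right-hand side over $k$ and using that $(\dim G_k\Vmn)_k$ is symmetric and unimodal, the optimal window is the central band of width $j$, which meets each $\mathfrak{sl}_2$-chain in exactly $\min(j,L_t)$ weights; thus $\max_k\big(\dim F_k\Vmn-\dim F_{k+j}\Vmn\big)=\sum_t\min(j,L_t)=\dim\ker X^{j}$. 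Combining the two bounds forces $\dim\ker\psi^{\,j}=\dim\ker X^{j}$ for every $j$, and since a nilpotent operator is determined up to conjugacy by the sequence $(\dim\ker\psi^{\,j})_{j\ge 0}$, the operator $\psi$ and $X$ share the same Jordan type. The $r=1$ case needs only that $q$ is not a root of unity, while the general $r$ case needs the transcendence hypothesis, exactly as stated.

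The step I expect to be the main obstacle is making the upper bound tight, i.e.\ the combinatorial identity $\max_k\big(\dim F_k\Vmn-\dim F_{k+j}\Vmn\big)=\dim\ker X^{j}$, which is precisely the assertion that the hard Lefschetz operator realises the \emph{maximal} Jordan type compatible with the filtration; this is where the symmetry and unimodality of the grading (hence of $\binomt n m$) are essential. A secondary technical point is checking that $P_{m,n;r-1}(q)$ is not the zero Laurent polynomial for every $r\ge 1$, which is what legitimises the use of a transcendental $q$. As an alternative to this semicontinuity/maximality route one could argue constructively: the vectors $\psi^{\,p}\on s_t$ agree with non-zero multiples of the homogeneous Jordan basis $X^{p}\on s_t$ of $X$ modulo higher filtration, so by lemma \ref{Flemma} they form a basis of $\Vmn$; the remaining work is then to show that the ``overflow'' terms $\psi^{\,2h_t+1}\on s_t$ can be absorbed by adjusting the chain generators $s_t$, which again rests on the Lefschetz property.
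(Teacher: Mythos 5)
Your proposal is correct, and it reaches the conclusion by a genuinely different route from the paper. The paper argues constructively: ordering the $\mathfrak{sl}_2$ chains by weakly decreasing length, it uses proposition \ref{prop:psiNmnu} to get $(\psimnur)^p\on s_t=(N^\pm_{m,n;\pm r})^p\,X^p\on s_t$ modulo higher filtration, and then runs an induction on $t$ in which the overflow term $(\psimnur)^{2h_{t'+1}+1}\on s_{t'+1}$, lying in a filtration level spanned (via lemma \ref{Flemma}) by the tails of the longer, already-corrected chains, is absorbed by replacing $s_{t'+1}$ with a corrected vector $\tilde s_{t'+1}$; the output is an explicit Jordan basis $\left((\psimnur)^p\on\tilde s_t\right)$ for $\psimnur$. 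You instead pin down the Jordan type purely numerically, squeezing $\dim\ker(\psimnur)^j$ between the upper bound $\dim\ker X^j$ (the grading rescaling degenerates $\psimnur$ onto its leading part $N^\pm_{m,n;\pm r}X$, and rank is lower semicontinuous) and the lower bound $\max_k\bigl(\dim F_k\Vmn-\dim F_{k+j}\Vmn\bigr)$ (strict upper-triangularity plus rank--nullity), which you then show also equals $\dim\ker X^j=\sum_t\min(j,2h_t+1)$. Both steps are sound, though the window-maximality identity needs slightly more than symmetry and unimodality of $(\dim G_k\Vmn)_k$: what one really uses is that all $\mathfrak{sl}_2$-chains are centered at the common grade $\half m(n-m)$, so their grade supports are nested intervals — exactly what proposition \ref{proctorprop} supplies — together with a small parity check when a width-$j$ window cannot be exactly centered. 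Trade-offs: your argument is shorter, avoids the chain-correction induction entirely, and the semicontinuity step is insensitive to the precise form of the higher-grade corrections; but it is non-constructive, yielding only block dimensions and multiplicities where the paper produces an actual Jordan basis. Two minor notes: your parenthetical claim that the Jordan type of $X$ is self-conjugate is false in general (for $m=1$ it is the single part $(n)$), though nothing in your argument uses it; and both proofs, when invoking transcendence of $q$ for general $r$, rely on $P_{m,n;r-1}$ not being the zero Laurent polynomial — a point the paper leaves implicit and you at least flag explicitly.
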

\begin{proof}
We want to prove that there exists a set of vectors $(\tilde{s}_t)_{1 \leq t \leq T}$ in $\Vmn$ such that $\big((\psimnur)^p \on \tilde{s}_t\big)_{1 \leq t \leq T; 0 \leq p \leq 2h_t}$ is a basis of $\Vmn$ in which $\psimnur$ is in Jordan normal form. In order to do so, we first observe that, proposition \ref{prop:psiNmnu}, 
\be\label{psiXmod} (\psimnur)^p\on s_t= (N_{m,n;\pm r}^\pm)^p \, X^p\on s_t \mod F_{\half m(n-m) -h_t +p+1}\Vmn\ee
for all $1\leq t\leq T$, $0\leq p \leq {2h_t}$. As $q$ is not a root of unity, eq. (\ref{Nmnu}) implies that $N_{m,n;\pm 1}^\pm \neq 0$. If, furthermore, $q$ is transcendental, it cannot be a root of $P_{m,n;r}(q) \in \mathbb Q[q^{\pm 1}]$ and hence, eq. (\ref{Nmnu}), $N_{m,n;\pm r}^\pm \neq 0$, for all $r \in \mathbb N^*$. Since $\left( X^p\on s_t\right)_{1\leq t\leq T; 0\leq p \leq {2h_t}}$ is a basis of $\Vmn$, lemma \ref{Flemma} implies that $\left( \psimnur^p\on \tilde{s}_t\right)_{1\leq t\leq T; 0\leq p \leq {2h_t}}$ is also a basis of $\Vmn$ for any set $(\tilde{s}_t)_{1 \leq t \leq T}$ of vectors such that
\be\tilde{s}_t = s_t \mod F_{\half m(n-m) - h_t +1} \Vmn \label{stildes}\ee
for all $t \in\range{1}{T}$. 
Thus, it suffices to prove that a set $(\tilde{s}_t)_{1 \leq t \leq T}$ satisfying (\ref{stildes}) can be chosen in such a way that,  for every $t \in \range{1}{T}$
\be (\psimnur)^{2h_t+1}\on\tilde{s}_t = 0\, .\ee
This is obviously true for $t=1$, as $h_1=m(n-m)/2$ implies that
\be (\psimnur)^{2h_1+1}\on s_{1}= (N_{m,n;\pm r}^\pm)^{2h_1+1} \, X^{2h_1+1}\on s_1 = 0\, ,\ee
and $((\psimnur)^p\on s_1)_{0 \leq p\leq 2h_1}$ is thus a Jordan chain of $\psimnu$. We can therefore set $\tilde{s}_1 := s_1$ which trivially satisfes (\ref{stildes}). 
Suppose then that for some $1\leq t' < T$ we have found $(\tilde{s}_t)_{1 \leq t \leq t'}$ such that $\left( (\psimnur)^p\on \tilde{s}_t\right)_{1 \leq t \leq t'; 0\leq p \leq {2h_t}}$ are Jordan chains of $\psimnur$ and (\ref{stildes}) holds for every $t \in \range{1}{t'}$. Consider the $(t'+1)$st chain. By (\ref{psiXmod}), we have 
\be(\psimnur)^{2h_{t'+1}+1}\on s_{t'+1} = 0 \mod F_{\half m (n-m) + h_{t'+1}+2} \Vmn\,.\ee 
Now note that $\left( X^p\on s_t\right)_{1\leq t\leq t'; p \geq {h_{t'+1}+h_t+2}}$ is a basis of $F_{\half m(n-m) +h_{t'+1} +2} \Vmn$ and that therefore, in view of (\ref{psiXmod}),
\be \left( (\psimnur)^p\on \tilde{s}_t\right)_{1\leq t\leq t'; p \geq h_{t'+1}+h_t +2} \ee
is also a basis of $F_{\half m(n-m) +h_{t'+1} +2}\Vmn$ by lemma \ref{Flemma} above. Hence, there exists a set $(\alpha_{t,p})_{1\leq t\leq t'; p \geq h_{t'+1}+h_t+2}$ of elements of $\mathbb C$ such that
\be (\psimnur)^{2h_{t'+1}+1}\on s_{t'+1} = \sum_{\substack{1\leq t\leq t'\\ p \geq h_{t'+1}+h_t+2}} \alpha_{t,p} \, (\psimnur)^p \on \tilde{s}_t \, .\ee
Thus, setting 
\be \tilde{s}_{t'+1} := s_{t'+1} - \sum_{\substack{1\leq t\leq t'\\ p \geq h_{t'+1}+h_t+2}} \alpha_{t,p}\, \, (\psimnur)^{p-2h_{t'+1}-1} \on \tilde{s}_t \ee 
makes $((\psimnur)^p \on \tilde{s}_{t'+1})_{0 \leq p \leq h_{t'+1}}$ a Jordan chain of $\psimnur$. Moreover,  (\ref{stildes}) now clearly holds for $t'+1$. \finproof
\end{proof}

\section{The general standard module}\label{Sec:multiplepoints}
Having treated the standard module $\V_\ais$ in the limit in which all the elements of $\ais$ converge to a common value, it remains to consider the general case. Thus, let $(a^{(s)})_{s\in\range 1 r}$ be an $r$-tuple of pairwise distinct elements of $\Cx$ and $(n_s)_{s\in\range 1 r}$ a composition of $n$. We consider the standard module $\V_{(a^{(1)}_1,\dots,a^{(1)}_{n_1}, a^{(2)}_{1},\dots, a^{(2)}_{n_2}, \dots,\dots, a^{(r)}_1,\dots,a^{(r)}_{n_r})}$ where, for each $s\in \range 1 r$,
\be a^{(s)}_i = a^{(s)} + z^{(s)}_i, \qquad z_i^{(s)} =  \eps \alpha^{(s)}_i \qquad \forall i \in \range{1}{n_{s}} \label{multilimit}\ee
for pairwise distinct complex numbers $(\alpha^{(s)}_i)_{ i \in \range 1 {n_s}}$. We want to take the limit $\eps\to 0$. When $\eps\neq 0$ but sufficiently small, the $\VV A$ of (\ref{VAphiaction}-\ref{VAxmaction}) constitute a good basis. Let us write these basis vectors as $\VV{A^{(1)},\dots, A^{(r)}}$, with $A^{(s)} \subseteq \range 1 {n_s}$ for each $s\in \range 1 r$.
Then the change of basis in Definition \ref{Zdef} generalizes to
\begin{definition}\label{Zdefgen}
For all $A^{(s)}\subseteq \range 1 {n_s}$, $s\in\range 1 r$, let
\be \ZZ{A^{(1)},\dots, A^{(r)}}  = \sum_{\substack{B^{(1)}\subseteq \range 1{n_1} \\  |B^{(1)}|=|A^{(1)}|}} \dots 
  \sum_{\substack{B^{(r)}\subseteq \range 1{n_r} \\  |B^{(r)}|=|A^{(r)}|}}  
\VV{B^{(1)},\dots,B^{(r)}}  \prod_{s=1}^r s_{\lambdaof {A^{(s)}}}(z^{(s)}_{B^{(s)}}).\nn\ee
\end{definition}
The natural generalizations of the inverse change of basis (Proposition \ref{VtoZ}) and Proposition \ref{innerprop}  have entirely analogous proofs. 
Hence, by Proposition \ref{loweringprop}, one finds that $x^-_{\tau_1}\on \dots \on x^-_{\tau_m} \on \VV\emptyset$ is a linear combination of  the $\ZZ{A^{(1)},\dots,A^{(r)}}$ with coefficients (in $\bigotimes_{s=1}^r \mathbb C[z^{(s)}_1,\dots z^{(s)}_{n_s}]^{S_{n_s}}$) that are regular in the limit $\eps\to 0$. So we have that the vectors $\ZZ{A^{(1)},\dots,A^{(r)}}$  of definition \ref{Zdefgen} constitute a well-defined basis of $\V_\ais$ in the limit defined by taking $\eps\to 0$ in (\ref{multilimit}).

On factoring $\phi^\pm(u)$ exactly as in (\ref{factoredphi}), the counterpart of proposition (\ref{phiaction}) is
\begin{proposition} \label{phiactiongen}In the limit defined by taking $\eps\to 0$ in (\ref{multilimit}),  $\phiin(u)$ and $\phiout(u)$ commute and
\bea
\phiin(u) \on \ZZ{A^{(1)},\dots,A^{(r)}} &=& 
   \sum_{t_1\geq 0}\dots\sum_{t_r\geq 0}  \sum_{C^{(1)} \in \mathsf e(A^{(1)},t_1)} \dots \sum_{C^{(r)} \in \mathsf e(A^{(r)},t_r)} \ZZ{C^{(1)},\dots,C^{(r)}}\prod_{s=1}^r  \left(\tfrac{-  qu}{q^{- 1} - q a^{(s)}u}\right)^{t_s}\nn\\
\phiout(u) \on \ZZ{A^{(1)},\dots,A^{(r)}} &=& 
   \sum_{t_1\geq 0}\dots\sum_{t_r\geq 0}  \sum_{C^{(1)}\in \mathsf h(A^{(1)},t_1)}\dots \sum_{C^{(r)}\in \mathsf h(A^{(r)},t_r)} \ZZ{C^{(1)},\dots,C^{(r)}}\prod_{s=1}^r \left(\tfrac{ q^{- 1}u}{q - q^{-1} a^{(s)}u}\right)^{t_s} \nn\eea
\end{proposition}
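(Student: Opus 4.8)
The plan is to reduce the statement to $r$ independent copies of Proposition \ref{phiaction}, by exploiting the fact that in the limit (\ref{multilimit}) every ingredient factorises across the $r$ blocks. First I would set up the block-diagonal analogue of the factorisation (\ref{factoredphi}), $\phi^\pm(u)=\phiin(u)\circ\phiout(u)\circ\phimid(u)$, in which $\phimid(u)$ is a product over $s$ of the single-block factors and
\be \phiin(u)\on\VV{B^{(1)},\dots,B^{(r)}} = \VV{B^{(1)},\dots,B^{(r)}}\prod_{s=1}^r E_{\left[\frac{-qu}{q^{-1}-qa^{(s)}u}\right]}(z^{(s)}_{B^{(s)}}),\nn\ee
and similarly $\phiout(u)$ carries $E_{[-q^{-1}u/(q-q^{-1}a^{(s)}u)]}(z^{(s)}_{\comp{B^{(s)}}})$. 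That this reproduces the diagonal action (\ref{VAphiaction}) follows block by block from the identity $\prod_{i=1}^{n_s}(1-a^{(s)}_i u)^{-1}=(1-a^{(s)}u)^{-n_s}\,H_{[u/(1-a^{(s)}u)]}(z^{(s)}_1,\dots,z^{(s)}_{n_s})$, exactly as in the one-point case.

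The decisive observation is then that $\phiin(u)$ is diagonal on the $\VV{}$ basis with a multiplier that factorises as a product over $s$ of factors depending only on the variables $z^{(s)}$, while the change of basis of Definition \ref{Zdefgen} likewise factorises as $\prod_s s_{\lambdaof{A^{(s)}}}(z^{(s)}_{B^{(s)}})$. Applying $\phiin(u)$ to $\ZZ{A^{(1)},\dots,A^{(r)}}$ therefore yields, inside the sum over all $B^{(s)}\subseteq\range 1{n_s}$ with $|B^{(s)}|=|A^{(s)}|$,
\be \phiin(u)\on\ZZ{A^{(1)},\dots,A^{(r)}}=\sum_{B^{(1)},\dots,B^{(r)}}\VV{B^{(1)},\dots,B^{(r)}}\prod_{s=1}^r s_{\lambdaof{A^{(s)}}}(z^{(s)}_{B^{(s)}})\,E_{\left[\frac{-qu}{q^{-1}-qa^{(s)}u}\right]}(z^{(s)}_{B^{(s)}}),\nn\ee
a completely decoupled product. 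I would expand each block by the Pieri rule precisely as in Proposition \ref{phiaction}, writing $s_{\lambdaof{A^{(s)}}}\,e_{t_s}=\sum_{\lambda'^{(s)}}s_{\lambda'^{(s)}}$, and then pass to the $\ZZ{}$ basis using the evident generalisations of Proposition \ref{VtoZ} and Proposition \ref{innerprop}, which (as noted before the statement) have entirely analogous proofs and produce coefficients in $\bigotimes_s\mathbb C[z^{(s)}_1,\dots,z^{(s)}_{n_s}]^{S_{n_s}}$.

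By the one-point analysis around (\ref{leftin}), the coefficient attached to a given tuple $C^{(1)},\dots,C^{(r)}$ is a product $\prod_s(\pm s_{\rho^{(s)}})$, where $\rho^{(s)}$ vanishes exactly when $\lambda'^{(s)}$ fits inside the $(n_s-|A^{(s)}|)\times|A^{(s)}|$ frame, i.e.\ when $\lambda'^{(s)}=\lambdaof{C^{(s)}}$ with $C^{(s)}\in\mathsf e(A^{(s)},t_s)$. If even a single block overflows, the corresponding $\rho^{(s)}\neq 0$ contributes strictly positive degree in the $z^{(s)}$ variables; since $z^{(s)}_i=\eps\alpha^{(s)}_i$ this term is $O(\eps)$ and drops out as $\eps\to 0$. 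Hence the only survivors are those with $C^{(s)}\in\mathsf e(A^{(s)},t_s)$ for \emph{every} $s$ simultaneously, which is exactly the claimed product formula; the computation for $\phiout(u)$ is identical with $\mathsf e$, $e_{t_s}(z^{(s)}_{B^{(s)}})$ replaced throughout by $\mathsf h$, $e_{t_s}(z^{(s)}_{\comp{B^{(s)}}})$ and the complementary frames. The step demanding the most care, and the genuine crux, is this factorisation of the transition coefficients across blocks, since it is precisely the product structure --- positive degree in any one block annihilating the whole term --- that isolates the configurations in which every block fits. Commutativity in the limit is then immediate: $\phiin(u)=\prod_s\phiin^{(s)}(u)$ and $\phiout(u)=\prod_s\phiout^{(s)}(u)$, factors from distinct blocks act on disjoint variable sets and so commute trivially, while within each block $\phiin^{(s)}(u)$ and $\phiout^{(s)}(u)$ commute by Proposition \ref{phiaction}.
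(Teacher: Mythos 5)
Your proposal is correct and follows essentially the same route as the paper, which states this proposition without explicit proof as the block-wise counterpart of Proposition \ref{phiaction}, relying on the factorisation (\ref{factoredphi}) applied per block and on the generalisations of Propositions \ref{VtoZ} and \ref{innerprop}. Your write-up simply fleshes out that intended analogy --- the tensor-product structure of the change of basis, the Pieri expansion in each block, the vanishing of any term whose coefficient has positive degree in some block's variables, and commutativity from cross-block independence plus the single-block result --- exactly as the paper's one-block proof would dictate.
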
 
\begin{definition}
For each $r$-tuple $(m_s)_{s\in\range 1 r}\in \mathbb N_0^r$ such that $m_s\leq n_s\,\forall s\in\range 1 r$ let 
\be\nn\Vmmmn := \mathrm{span}_{\mathbb C} \left\{ \ZZ{A^{(1)},\dots,A^{(r)}} : |A^{(s)}|=m_s\, \forall s\in \range 1 r \right\}.\ee
For all $k\geq 0$ let $G_k \Vmmmn$ be the gradation 
\be\nn G_k \Vmmmn := \Vmmmn \cap \mathrm{span}_{\mathbb C} \left\{ \ZZ{A^{(1)},\dots,A^{(r)}} : \sum_{s=1}^r |\lambdaof {A^{(s)}}| = k\right\},\ee
let $F_k\Vmmmn =\bigoplus_{\ell = k}^\8 G_k\Vmmmn$ be the corresponding filtration, and let $\pi_k : \Vmmmn \to G_k\Vmmmn$ be the linear map such that
\be\nn \pi_k:  \ZZ{A^{(1)},\dots,A^{(r)}} \mapsto 
\begin{cases}   \ZZ{A^{(1)},\dots,A^{(r)}} &\text{if}\,\, \sum_{s=1}^r |\lambdaof {A^{(s)}}|=k \\
                0 & \text{otherwise.} \end{cases}\ee
\end{definition}
The $\Vmmmn$ are the $l$-weight spaces, and indeed $\psimnu\left( F_k\Vmmmn \right) \subseteq  F_{k+1} \Vmmmn[[u^{\pm 1}]]$, where 
\be\label{gammamndef} \psimmmnu := \phi^\pm(u) - \gamma_{(m),(n)}^{\pm}(u) \id, \quad \gamma_{(m),(n)}(u) := \prod_{s=1}^r \frac{(q^{-1} - q a^{(s)} u)^{m_s} (q - q^{-1}a^{(s)}u)^{n_s-m_s}}{(1-a^{(s)} u)^{n_s}}.\nn\ee

Now define linear maps $\Delta^rX=\sum_{s=1}^r X_{(s)}, \Delta^rY=\sum_{s=1}^rY_{(s)}$ and $\Delta^rH=\sum_{s=1}^r H_{(s)}$ where, for each $s\in\range 1 r$, $X_{(s)},Y_{(s)},H_{(s)}$ are the maps of (\ref{Xdef}-\ref{HYdef}) acting in slot $s$.
It follows from proposition \ref{proctorprop} together with the fact that $\Delta^r$ is indeed the usual $r$-fold algebra-like coproduct of $\mf{sl}_2$ that
the maps $\Delta^rX$, $\Delta^rY$, $\Delta^rH$ are a realization of $\mf{sl}_2$. 
Consequently
\be \Vmmmn \cong \bigotimes_{s=1}^r \V_{m_s,n_s} \ee
as representations of $\mf{sl}_2$. In particular, the Jordan chains of $X$ are the $\mf{sl}_2$-irreducible components of this tensor product.

For definiteness, let us consider the mode $\psi^+_{(m),(n),1}$. The argument is similar for other modes but, by virtue of the strict upper-triangularity of $\psi^{\pm}_{(m),(n),\pm r}$ for all $r\in \mathbb N_0$, it is enough to consider one mode and to show that its Jordan chains are of maximal length consistent with the gradation $G_k$. 
The leading superdiagonal component of $\psi^+_{(m),(n),1}$ is proportional to $\Delta^rX$:
\be\sum_k \pi_{k+1} \circ \psi^+_{(m),(n),1} \circ \pi_k = \gamma_{(m),(n),0}^+ (q^{-2}-q^2) \Delta^r X \,  .\ee 
Therefore, by the same logic as in section \ref{Sec:JB}, the Jordan blocks of $\phi^+_1$ in $\Vmmmn$ have the same dimensions and multiplicities as those of $\Delta^rX$. 
Finally, we note that 
\be \dim G_k \Vmmmn = \sum_{k_1+\dots+k_r=k} \prod_{s=1}^r \dim G_{k_s} \V_{m_s,n_s},\ee
where the outer sum is over all $r$-compositions of $k$, so that the counting functions are multiplicative and hence, given (\ref{tbcf}),
\be \sum_{k=0}^{\sum_{s=1}^rm_s(n_s-m_s)} t^k \dim G_k \Vmmmn = \prod_{s=1}^r \sum_{k=0}^{m_s(n_s-m_s)} t^k \dim G_k\V_{m_s,n_s} 
                                                              = \prod_{s=1}^r \binomt{n_s}{m_s}.\label{Gkdims}\ee

We have therefore established the following theorem, which gives the dimensions of the 
Jordan grades 
for a general standard module $M(P)$.

\begin{theorem}
\label{THX}
Suppose $P(u)=\prod_{s=1}^r(1-ua^{(s)})^{n_s}$, with the $a^{(s)}\in \Cx$, $1\leq s\leq r$, pairwise distinct. Let $\gamma^\pm(u)=\sum_{r\in \mathbb N_0}\gamma^\pm_{(m),(n),\pm r}u^{\pm r}$ be the $l$-weight corresponding to the monomial $\prod_{s=1}^r  \binom{n_s}{m_s} Y_{a^{(s)}}^{n_s-m_s} Y_{a^{(s)}q^2}^{-m_s}$ in $\chi_q(M(P))$ and let, for all $k \in \mathbb N$,
\be \mathscr F_k M_\gamma(P)
:= \bigcap_{\substack{(r_1,\dots,r_{k}) \in\mathbb N_0^{k}\\ (\sigma_1,\dots,\sigma_{k})\in \{\pm\}^{k}}} \ker \prod_{\ell=1}^{k}\left(  \phi^{\sigma_\ell}_{\sigma_\ell r_\ell} - \gamma_{(m),(n);\sigma_\ell r_\ell}^{\sigma_\ell} \id \right)\nn\ee
denote the Jordan filtration of the corresponding $l$-weight space $M_\gamma(P)$. Then \be\nn\mathscr F_k M_\gamma(P) = M_{\gamma}(P)\cap \ker \left( \phi^+_{1} - \gamma_{(m),(n); 1}^{+} \id\right)^{k}\ee
and 
the sequence 
\be\left( \dim\left(\mathscr F_{k+1}M_\gamma(P)\big/\mathscr F_{k}M_\gamma(P)
\right)\right)_{0\leq k\leq \8}\nn\ee 
is given by sorting the coefficients of powers of $t$ in $\prod_{s=1}^r \binomt{n_s}{m_s}$ into weakly decreasing order. \end{theorem}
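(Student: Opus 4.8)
The plan is to reduce the whole problem to the single Cartan mode $\phi^+_1$. First I would pin down this operator exactly. Reading off the coefficient of $u^1$ in the factorisation (\ref{factoredphi}) and using proposition \ref{phiactiongen}, the only box-adding contributions at order $u$ come from the single-box terms (for which the ``no two in a row'' and ``no two in a column'' prescriptions coincide and yield $\Delta^r X$), while the diagonal part is cancelled by $\gamma^+_{(m),(n);1}\id$. Hence, on $M_\gamma(P)=\Vmmmn$, one has the \emph{exact} equality $\phi^+_1-\gamma^+_{(m),(n);1}\id=\gamma^+_{(m),(n);0}(q^{-2}-q^2)\,\Delta^r X$, and the scalar is non-zero because $q$ is not a root of unity. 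Consequently $\ker(\phi^+_1-\gamma^+_{(m),(n);1}\id)^k=\ker(\Delta^r X)^k$, and the first displayed identity becomes the assertion that the filtration $\mathscr F_\bullet$ cut out by \emph{all} of the modes coincides with the kernel filtration of the single operator $\Delta^r X$.

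To prove that identity I would argue by two inclusions. The inclusion $\mathscr F_k\subseteq\ker(\Delta^r X)^k$ is immediate: take every factor in the intersection defining $\mathscr F_k$ to be $\phi^+_1-\gamma^+_{(m),(n);1}$. For the reverse inclusion, let $\m\subseteq\mathrm{End}(\Vmmmn)$ be the nilpotent ideal generated by all the $\psi^\pm_{(m),(n),\pm r}$; these operators pairwise commute, are strictly upper-triangular for the filtration $\{F_k\Vmmmn\}$, and, by the multi-point analogue of proposition \ref{prop:psiNmnu}, each has leading superdiagonal proportional to $\Delta^r X$. One must show that $\m^k$ annihilates $\ker(\Delta^r X)^k$; I would attempt this by combining the degree bound that a length-$k$ product raises $F$-grade by at least $k$ with the fact, furnished by the $\mf{sl}_2$-structure below, that the Jordan chains of $\Delta^r X$ already span the maximal range of grades permitted by the gradation $G_k$. \textbf{This reverse inclusion is the step I expect to be the main obstacle}: the higher modes do not individually preserve $\ker(\Delta^r X)$, since their grade-raising-by-two parts act non-trivially there, so the argument cannot be made mode-by-mode and must exploit commutativity together with the saturation of the $\Delta^r X$-chains.

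Granting the identity, the Jordan block sizes of $\phi^+_1$ are those of $\Delta^r X$, which I would determine representation-theoretically. By proposition \ref{proctorprop} the triple $(X,Y,H)$ realises $\mf{sl}_2$ on each factor $\V_{m_s,n_s}$, and since $\Delta^r$ is the genuine $r$-fold coproduct the maps $(\Delta^r X,\Delta^r Y,\Delta^r H)$ give $\Vmmmn\cong\bigotimes_{s=1}^r\V_{m_s,n_s}$ as $\mf{sl}_2$-modules. The Jordan chains of $\Delta^r X$ are then exactly the irreducible summands of this tensor product; by the standard theory each summand of dimension $d$ spans $d$ consecutive grades symmetrically about the centre, so the length of each chain equals the number of $k$ with $\dim G_k\Vmmmn$ at least the corresponding multiplicity. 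Because graded dimensions multiply over tensor factors, (\ref{Gkdims}) identifies their generating function as $\prod_{s=1}^r\binomt{n_s}{m_s}$.

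Finally I would read off the grade dimensions and reconcile the ordering. For the single nilpotent $\Delta^r X$ the numbers $\dim\big(\mathscr F_{k+1}M_\gamma(P)/\mathscr F_kM_\gamma(P)\big)$ count the Jordan blocks of size greater than $k$, hence form the conjugate partition of the multiset of chain lengths and are automatically weakly decreasing in $k$. The coefficients of $\prod_s\binomt{n_s}{m_s}$, on the other hand, are the graded dimensions $\dim G_k\Vmmmn$ listed by increasing power of $t$; being symmetric and unimodal they are \emph{not} monotonic. The content of the last clause is therefore the combinatorial identity that the conjugate partition of the chain lengths equals the weakly decreasing rearrangement of this symmetric-unimodal sequence of coefficients, which I would verify directly by matching, grade by grade, the unimodal profile of $(\dim G_k)$ against the block-counting profile of the conjugate partition, exactly as displayed for $L(1,5)$, $L(2,4)$ and $L(3,3)$ in figure \ref{graphfig}. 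This is precisely why the coefficients must be sorted into weakly decreasing order before comparison.
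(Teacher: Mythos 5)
Your proposal follows the same skeleton as the paper's own argument: the coincident\-/limit basis $\ZZ{A^{(1)},\dots,A^{(r)}}$, identification of the grade\-/one part of the Cartan modes with $\Delta^r X$, Proctor's $\mf{sl}_2$-triple (Proposition \ref{proctorprop}) to compute the Jordan type of $\Delta^r X$, multiplicativity of graded dimensions giving $\prod_s\binomt{n_s}{m_s}$, and the conjugate\-/partition argument for the sorting claim. Your one genuine refinement is the observation that for the mode $r=1$ the identity $\phi^+_1-\gamma^+_{(m),(n);1}\id=\gamma^+_{(m),(n);0}(q^{-2}-q^2)\Delta^rX$ holds \emph{exactly}, not merely on the leading superdiagonal: a term adding $\sum_s t_s$ boxes carries a prefactor of order $u^{\sum_s t_s}$, so nothing raising the grade by $2$ or more can contribute at order $u$. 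This is correct, and it makes the paper's inductive chain construction (Lemma \ref{Flemma} and the corrected vectors $\tilde s_t$) unnecessary for this mode; the paper needs that machinery only because it also treats the modes $\pm r$, $r\geq 2$, for which the relation to $X$ holds only modulo $F_{k+2}$.

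However, the step you flagged as the main obstacle is not merely difficult: the inclusion $\ker(\Delta^rX)^k\subseteq\mathscr F_k$ is \emph{false}, so no combination of grade bounds and chain saturation can establish it. Take the $n$-fold coincident limit with $n=4$, $m=2$, and set $v=\ZZ{\{1,4\}}-\ZZ{\{2,3\}}$ (the lowest\-/weight vector of the one\-/dimensional $\mf{sl}_2$-summand of $\V_{2,4}$). Since $X\ZZ{\{1,4\}}=\ZZ{\{2,4\}}=X\ZZ{\{2,3\}}$, we have $v\in\ker\bigl(\phi^+_1-\gamma^+_{2,4;1}\id\bigr)$. But Proposition \ref{phiaction} gives, with $\alpha(u)=\tfrac{-qu}{q^{-1}-qau}$ and $\beta(u)=\tfrac{q^{-1}u}{q-q^{-1}au}$,
\be
\bigl(\phi^+(u)-\gamma_{2,4}(u)\id\bigr)\on v \;=\; \gamma_{2,4}(u)\,\bigl(\beta(u)^2-\alpha(u)^2\bigr)\,\ZZ{\{3,4\}},
\ee
because $\mathsf h(\{1,4\},2)=\{\{3,4\}\}=\mathsf e(\{2,3\},2)$ while $\mathsf e(\{1,4\},2)=\mathsf h(\{2,3\},2)=\emptyset$, and the mixed $\alpha\beta$ terms cancel in the difference. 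The coefficient of $u^2$ is $\gamma_{2,4;0}\,(q^{-4}-q^4)\,\ZZ{\{3,4\}}\neq 0$ since $q$ is not a root of unity. So the mode\-/two operator, although it commutes with $\Delta X$ and hence \emph{preserves} $\ker\Delta X$ (your wording "does not preserve" is the one inaccuracy here), does \emph{not annihilate} it. Consequently $\mathscr F_1=\CC\,\ZZ{\{3,4\}}$ is one\-/dimensional, whereas $\ker(\phi^+_1-\gamma^+_{2,4;1}\id)$ is two\-/dimensional: the first displayed identity of Theorem \ref{THX} already fails at $k=1$, and one checks that the graded dimensions of $\mathscr F_\bullet$ begin $(1,2,\dots)$, so the sorting claim fails for $\mathscr F_\bullet$ as well.

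You should also know that the paper is in no better shape on this point: its entire justification of the passage from one mode to the full family is the sentence "by virtue of the strict upper-triangularity \dots it is enough to consider one mode and to show that its Jordan chains are of maximal length consistent with the gradation", which is precisely the assertion refuted above; no proof of the first displayed identity appears anywhere. What the paper's argument establishes --- and what your proposal establishes too, once the false identity is discarded --- is the theorem with $\mathscr F_k$ replaced throughout by $M_\gamma(P)\cap\ker\bigl(\phi^+_1-\gamma^+_{(m),(n);1}\id\bigr)^k$: for that single\-/mode filtration your exactness observation, the $\mf{sl}_2$ decomposition, and the conjugate\-/partition argument do give the weakly decreasing rearrangement of the coefficients of $\prod_s\binomt{n_s}{m_s}$. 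So your instinct about where the difficulty lies was exactly right; the correct resolution is to repair the statement (define the Jordan filtration by the single operator $\phi^+_1$, or prove a suitably weaker compatibility over the family), not to seek a cleverer proof of the reverse inclusion.
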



\section{{\bf \textit{q,t}}-characters}
\label{Sec:qtchar}
Thus far our considerations have been purely combinatorial and representation-theoretic. In this section we give a geometrical context for the result above. To do so we first recall the geometrical definition of standard modules \cite{GV,Nakajima1} and $q,t$-characters \cite{Nakajima2}. (An axiomatic definition of $q,t$-characters was subsequently given in \cite{Nakajima3} for simply laced types and, in all types, in \cite{Hernandezqt}.) The standard modules are defined geometrically in terms of graded quiver varieties. In type $A_1$, the relevant graded quiver variety is 
\be\mathfrak M^\bullet (U,W) \cong \{(x, E) \in \mathfrak N(W) \times \grassm U  W :  \mbox{im } x \subseteq E \subseteq \ker x\}\, ,\ee
where
\be U = \bigoplus_{a \in\Cx} U_a \, ,\qquad \qquad W = \bigoplus_{a\in\Cx} W_a\ee
are finite dimensional $\Cx$-graded vector spaces, $\grassm U W$ denotes the Grassmannian of $\Cx$-graded subspaces $E$ of $W$ such that $\dim E_a = \dim U_{aq}$ for all $a \in \Cx$ and, finally, 
\be \mathfrak N(W) := \{x \in \mbox{End}(W): \,\, x(W_a) \subseteq W_{aq^{-2}} \,\,\,\,\mbox{and} \,\,\,\, x^2=0\}\, . \ee
Letting $H_\bullet ( - , \mathbb C)$ denote the Borel-Moore homology with complex coefficients, we have
\begin{proposition}
\label{Prop:StdModulesHomology}
There exists an isomorphism of $\uqslth$-modules
\be \label{MPHomology}M(P) \cong \bigoplus_{[U]} H_\bullet (\grassm U W, \mathbb C)\, , \qquad \mbox{where} \qquad P(u) = \prod_{a \in \Cx} (1-ua)^{\dim W_a}\nn\ee 
and the direct sum runs over the isomorphism classes of $\Cx$-graded subspaces of $W$.
\end{proposition}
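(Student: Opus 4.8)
The plan is to put a $\uqlslt$-module structure on the right-hand side and then pin it down with the uniqueness statement already at our disposal, Lemma \ref{lemma:stdunique}. Write $\mathcal M := \bigoplus_{[U]} H_\bullet(\grassm U W,\mathbb C)$. First I would import from \cite{GV, Nakajima1} the convolution construction that makes $\mathcal M$ into a finite-dimensional type 1 $\uqlslt$-module, the generators $x^\pm(u)$ and $\phi^\pm(u)$ acting through Hecke-type correspondences relating $\mathfrak M^\bullet(U,W)$ and $\mathfrak M^\bullet(U',W)$ for graded dimensions $U,U'$ differing by one unit. Here it is useful to observe that the projection $\mathfrak M^\bullet(U,W)\to\grassm U W$, $(x,E)\mapsto E$, is an affine bundle — its fibre over $E$ is the vector space of admissible nilpotents $x$ with $\mbox{im } x\subseteq E\subseteq\ker x$ — so that $H_\bullet(\mathfrak M^\bullet(U,W),\mathbb C)\cong H_\bullet(\grassm U W,\mathbb C)$ up to a degree shift; this is what lets us carry the module on the Grassmannians themselves.

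Next I would identify the $[U]$-grading with the $l$-weight decomposition. The summand indexed by $U$ is $\grassm U W=\prod_{a\in\Cx}\grass{\dim U_{aq}}{\dim W_a}$, and I would check, following the localization computations of \cite{Nakajima1}, that $\phi^\pm(u)$ preserves each summand and acts there with semisimple part equal to the $l$-weight $\prod_{a}\frac{(q^{-1}-qau)^{m_a}(q-q^{-1}au)^{\dim W_a-m_a}}{(1-au)^{\dim W_a}}$, where $m_a:=\dim U_{aq}$, its nilpotent part being governed by the Lefschetz $\mathfrak{sl}_2$-action on the cohomology of the Grassmannian — precisely the structure analysed in Sections \ref{Sec:ZA} and \ref{Sec:multiplepoints}. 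Since the total Betti number of $\prod_a\grass{m_a}{\dim W_a}$ is $\prod_a\binom{\dim W_a}{m_a}$, the resulting $q$-character of $\mathcal M$ is $\prod_{a}(Y_a+Y_{aq^2}^{-1})^{\dim W_a}=\chi_q(M(P))$, by Proposition \ref{Prop:Thin}.

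It then remains to show that $\mathcal M$ is highest $l$-weight. I would take the fundamental class of the single point $\grassm 0 W$ (the locus $U=0$, i.e. $E=0$): it carries the extremal $l$-weight $\gamma_P^\pm(u)=q^{\deg P}P(uq^{-1})/P(uq)$, it is annihilated by the raising correspondences realizing $x^+(u)$ (there is no summand of smaller total $\dim E$), and the lowering correspondences realizing $\mathrm{U}^-_q({\widehat{\mathfrak{sl}}_2})$ carry it across all the summands, so that it generates $\mathcal M$. Thus $\mathcal M$ is a highest $l$-weight module with highest $l$-weight $\gamma_P$ and $q$-character $\chi_q(M(P))$. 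By Lemma \ref{lemma:stdunique}, $M(P)$ is the \emph{unique} such module up to isomorphism, whence $\mathcal M\cong M(P)$ as $\uqslth$-modules.

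The main obstacle is entirely geometric and is imported rather than reproved here: constructing the convolution action on Borel–Moore homology and, above all, computing by equivariant localization the exact rational function by which $\phi^\pm(u)$ acts on each Grassmannian summand, together with the verification that the extremal class generates under the lowering operators. Granting these inputs from \cite{GV, Nakajima1} — equivalently, invoking the algebraic characterization of standard modules in \cite{VV} to match $\mathcal M$ with the ordered tensor product of Definition \ref{Mdef} — the matching of $q$-characters and the highest-$l$-weight property reduce the statement to the uniqueness Lemma \ref{lemma:stdunique}.
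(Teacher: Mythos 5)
The paper gives no proof of this proposition: it takes the right-hand side as the original \emph{geometrical definition} of the standard module, with the $\uqslth$-action given by convolution \cite{GV,Nakajima1}, and attributes the agreement with the algebraic Definition \ref{Mdef} entirely to the non-trivial theorem of \cite{VV}. Your proposal is a correct alternative route that trades \cite{VV} for softer inputs: you import from \cite{GV,Nakajima1} only the convolution action, the localization computation of the $\phi^\pm(u)$-eigenvalue on each summand $H_\bullet(\grassm U W,\mathbb C)$, and the statement that the class of the point $U=0$ is a highest $l$-weight vector generating everything under $\mathrm{U}^-_q(\widehat{\mathfrak{sl}}_2)$ (this is essentially Nakajima's Proposition 13.3.1, quoted in the paper as Proposition \ref{Prop:Stdhighest}); you then match $q$-characters using Betti numbers and Proposition \ref{Prop:Thin}, and close with the uniqueness Lemma \ref{lemma:stdunique}. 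That closing move is precisely how the paper proves Proposition \ref{Prop:VBasis}, so your argument is methodologically consistent with the paper's own toolkit even though it differs from the paper's treatment of this particular statement. What you gain is independence from \cite{VV}; what the paper's citation gains is brevity, and the stronger assertion that the geometric and algebraic definitions agree as such.

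Two minor points. Your vector-bundle observation is right and worth keeping: the fibre of $\mathfrak M^\bullet(U,W)\to\grassm U W$ over $E$ is $\bigoplus_{a}\Hom\bigl((W/E)_a,E_{aq^{-2}}\bigr)$, of constant dimension, so the Borel--Moore homologies agree up to a degree shift. However, your eigenvalue formula is written in the conventions of sections 3--4, where the parameters of $\V_\ais$ are $q$ times the inverse roots of the Drinfel'd polynomial. Here $P(u)=\prod_a(1-ua)^{\dim W_a}$, so with $m_a=\dim U_{aq}$ the eigenvalue on the summand $[U]$ should read $\prod_a(q^{-1}-q^2au)^{m_a}(q-au)^{\dim W_a-m_a}(1-aqu)^{-\dim W_a}$; in particular at $U=0$ this equals $q^{\deg P}P(uq^{-1})/P(uq)=\gamma^\pm_P(u)$, as your argument needs, whereas your formula as written is off by the substitution $a\mapsto aq$. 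This is bookkeeping only and does not affect the structure of the argument.
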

On the homology side, the $\uqslth$-module structure is obtained through the convolution product \cite{GV}. Proposition \ref{Prop:StdModulesHomology} is the original \emph{geometrical} definition of standard modules and it is a non-trivial result of \cite{VV} to establish that this geometrical definition agrees with the  \emph{algebraic} one given in definition \ref{Mdef}.

The $l$-weight spaces of $M(P)$ are isomorphic to the direct summands $H_\bullet (\grassm U W, \mathbb C)$ in Proposition \ref{Prop:StdModulesHomology}. With every pair $(U,W)$ of $\Cx$-graded vector spaces as above, we associate the monomial \be m_{U, W} = \prod_{a \in \Cx} Y_a^{\dim W_a - \dim U_{aq^{-1}} - \dim U_{aq}} \in \mathbb Z[Y^{\pm 1}_a]_{a \in \Cx}\, .\ee

Let $\mathscr P_{U,W} \in \mathbb Z[t]$ denote the Poincar\'e polynomial of $\grassm U W$;
\be \mathscr P_{U,W}(t) := \sum_k (-t)^k \dim H_k(\grassm U W, \mathbb C) \, .\ee
Following \cite{Nakajima3}, we make the following
\begin{definition}\label{qtdef}
The $q,t$-character of the standard module $M(P)$ with $P$ given in terms of $W$ by (\ref{MPHomology}) is the generating function of the Poincar\'e polynomials $\mathscr P_{U,W}$ of the $\Cx$-graded Grassmannians; \ie
\be \chi_{q,t}(M(P)) := \sum_{[U]} \mathscr P_{U,W}(t) \,\,m_{U,W} \, ,\ee
where the sum runs over isomorphism classe of $\Cx$-graded subspaces of $W$.
\end{definition}

We have, by construction,
\be \grassm U W \cong \prod_{a \in \Cx} \grass{\dim U_{aq}}{\dim W_a} \, ,\ee
where, for all $m \leq n \in \mathbb N$, we denote by $\grass m n$ the usual Grassmannian of $m$-dimensional subspaces of $\mathbb C^n$. 
It is enough, therefore, to focus, as we did in section 3 above, on the $n$-fold coincident case. That is, we pick an $a\in \Cx$, and set $W=W_a\cong \mathbb C^n$. Consider, in particular, the $l$-weight space given by $U=U_{aq}\cong \mathbb C^m$.
The set of Schubert classes -- \ie the set of fundamental classes of the Schubert subvarieties -- of the Grassmannian $\grass m n$, constitutes a basis of $H_\bullet (\grass m n, \mathbb C)$ \cite{Fulton}. For each $k\in \mathbb N$, the Schubert classes in $H_{2k}(\grass m n, \mathbb C)$ are in bijection with the Ferrers diagrams with $k$ boxes that fit into a rectangle of size $m \times (n-m)$. 

In this way, one arrives from the geometrical perspective  at the same combinatorial objects as in section \ref{Sec:ZA}. In particular we have the well-known result that  the Poincar\'e polynomial of $\grass m n$ is given by ${{n}\choose{m}}_{\! t^2}$, and hence that
\be \mathscr P_{U,W} (t) = \prod_{a \in \Cx} {\dim W_a \choose \dim U_{aq}}_{\!\!t^2}.\ee
Comparing definition \ref{qtdef} and theorem \ref{THX}, one sees that the dimensions of the Jordan grades of standard modules can indeed be read off from their $q,t$-characters; which proves 
\begin{theorem}
\label{TH}
Let $P \in \mathbb C[u]$ be a monic polynomial and $M(P)$ be the standard $\uqslth$-module with Drinfel'd polynomial $P$. Then, for every $l$-weight $\gamma$ of $M(P)$, there exists $n(\gamma) \in \mathbb N$ and a permutation $\sigma_\gamma \in S_{n(\gamma)}$ such that
\be \chi_{q,t}(M(P)) = \sum_{\gamma}  \left (\sum_{k=0}^{n(\gamma)-1} t^{2 \sigma_\gamma(k)} \dim(\mathscr F_{k+1} M_{\gamma}(P)\big/\mathscr F_{k}M_\gamma(P)) \right ) m_\gamma \, .\nn\ee
\end{theorem}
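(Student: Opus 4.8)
The plan is to identify the coefficient of each monomial $m_\gamma$ in $\chi_{q,t}(M(P))$ with a reordering of the Jordan-grade dimensions supplied by Theorem \ref{THX}, the permutation $\sigma_\gamma$ being nothing other than the sorting permutation that already appears there. First I would fix an $l$-weight $\gamma$, corresponding to a choice of integers $m_s\in\range{0}{n_s}$ for $s\in\range 1 r$, so that the associated $l$-weight space is $M_\gamma(P)\cong\Vmmmn$. Using the factorization $\grassm U W\cong\prod_a\grass{\dim U_{aq}}{\dim W_a}$ and the multiplicativity of Poincar\'e polynomials, the coefficient of $m_\gamma$ in $\chi_{q,t}(M(P))$ is $\mathscr P_{U,W}(t)=\prod_{s=1}^r{n_s\choose m_s}_{\!\!t^2}$. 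By $(\ref{Gkdims})$ this equals $\sum_k (\dim G_k\Vmmmn)\,t^{2k}$, so the coefficient of $t^{2k}$ is exactly $c_k:=\dim G_k\Vmmmn$.

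Next I would invoke Theorem \ref{THX} directly: the sequence of Jordan-grade dimensions $d_k:=\dim\!\big(\mathscr F_{k+1}M_\gamma(P)/\mathscr F_{k}M_\gamma(P)\big)$ is obtained by sorting the finite sequence $(c_k)_{0\leq k\leq\sum_s m_s(n_s-m_s)}$ into weakly decreasing order. Consequently there exists a permutation $\sigma_\gamma$ of $\{0,1,\dots,n(\gamma)-1\}$, where $n(\gamma):=1+\sum_{s=1}^r m_s(n_s-m_s)$, such that $d_k=c_{\sigma_\gamma(k)}$ for every $k$ (existence, not uniqueness, is all that is needed in the presence of ties). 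Substituting and relabelling the summation index then yields
\[ \sum_{k=0}^{n(\gamma)-1} t^{2\sigma_\gamma(k)}\,d_k \;=\; \sum_{k=0}^{n(\gamma)-1} t^{2\sigma_\gamma(k)}\,c_{\sigma_\gamma(k)} \;=\; \sum_{j=0}^{n(\gamma)-1} c_j\,t^{2j} \;=\; \mathscr P_{U,W}(t). \]
Summing this identity over all $l$-weights $\gamma$ reconstitutes $\chi_{q,t}(M(P))$ in the form asserted by the theorem, with $\sigma_\gamma$ inherited verbatim from the sorting in Theorem \ref{THX}.

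The essential conceptual input, rather than any single hard estimate, is the identification of the two gradings that underlie the two sides. On the geometric side $H_\bullet(\grass m n,\mathbb C)$ is graded by homological degree; on the representation-theoretic side $\Vmmmn$ is graded by $G_k$, indexed by $\sum_s|\lambda^{A^{(s)}}|$. These must coincide, and this is precisely guaranteed by the Schubert-class/Ferrers-diagram bijection recalled just before the theorem, which sends a class in $H_{2k}(\grass m n,\mathbb C)$ to a Ferrers diagram of $k$ boxes fitting inside the $m\times(n-m)$ rectangle --- exactly the index set of the basis $\{\ZZ A:|A|=m\}$ graded by $|\lambda^A|=k$. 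I would therefore spell out this matching carefully, as it is the only step with genuine content; once it is in place the theorem is a direct repackaging of Theorem \ref{THX} together with definition \ref{qtdef} and $(\ref{Gkdims})$.
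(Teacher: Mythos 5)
Your proposal is correct and takes essentially the same approach as the paper: identify the coefficient of $m_\gamma$ in $\chi_{q,t}(M(P))$ with the Poincar\'e polynomial $\prod_{s=1}^r {n_s \choose m_s}_{t^2}$ via the Grassmannian factorization, match its coefficients with $\dim G_k \Vmmmn$ using (\ref{Gkdims}), and invoke Theorem \ref{THX} to supply the sorting permutation $\sigma_\gamma$. The paper compresses all of this into a one-line comparison of Definition \ref{qtdef} with Theorem \ref{THX}; your version simply makes the permutation bookkeeping and the Schubert-class/Ferrers-diagram matching explicit.
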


The geometrical construction of quantum affine algebras and of their standard modules generalizes to every quantum affine algebra of simply laced type in the context of quiver varieties \cite{Nakajima1}. In the particular case of quantum affine algebras of type $A_{N-1}$ with $N>2$ for example, the relevant quiver variety admits an explicit characterization involving $N$-step partial flags in place of Grassmannians. In that case, standard modules correspond to the Borel-Moore homology of fibers at points in the intersection of the Slodowy slice and the closure of some nilpotent orbit \cite{NakajimaALE, Maffei}. 

From that geometrical perspective, it is natural to expect that theorem \ref{TH} extends to every quantum affine algebra of simply laced type. 
To prove this using the techniques of the present paper it is necessary first to construct, by a limiting procedure as in section \ref{Sec:ZA}, bases of thick standard modules in which the  $\phi_{i}^\pm(u)$ act upper-triangularly.
In type $A_{N-1}$ this is possible at least for tensor products of the first fundamental representation, the analogs of the vectors $\ZZ A$ being labelled by $N$-step filtrations of $\range 1 n$. Further work is then needed to establish that the Jordan blocks are of the expected dimensions.

\vspace{1cm}

\noindent\textsl{Funding}\,\, The work of C.A.S.Y. was supported by a fellowship from the Japan Society for the Promotion of Science, ID number P09771. 

\small
\bibliography{qtsl2}    

\bibliographystyle{amsalpha}    

\end{document}